\renewcommand{\boldsymbol}{\mathbf}
\newtheorem*{rep@theorem}{\rep@title}
\newcommand{\newreptheorem}[2]{%
\newenvironment{rep#1}[1]{%
 \def\rep@title{#2 \ref{##1}}%
 \begin{rep@theorem}}%
 {\end{rep@theorem}}}
\newtheorem*{theorem*}{Theorem}
\newtheorem{theorem}{Theorem}%[section]
\newtheorem{proposition}[theorem]{Proposition}
\newtheorem{lemma}[theorem]{Lemma}
\newtheorem{corollary}[theorem]{Corollary}
\newtheorem{definition}[theorem]{Definition}
\theoremstyle{definition}
\newtheorem{remark}[theorem]{Remark}
\newtheorem{example}[theorem]{Example}
\newcommand{\ZZ}{\mathbb{Z}}
\newcommand{\PP}{\mathbb{P}}
\newcommand{\commentpigna}[1]{}
\DeclareMathOperator{\Ad}{Ad}
\DeclareMathOperator{\Alb}{Alb}
\DeclareMathOperator{\Aut}{Aut}
\DeclareMathOperator{\Id}{Id}
\DeclareMathOperator{\Sym}{Sym}
\DeclareMathOperator{\Pic}{Pic}
\title[Further quotients and Albanese morphisms]{Quotients of the square of a curve by a mixed action, further quotients and Albanese morphisms}
\author{Roberto Pignatelli}
\address{Dipartimento di Matematica,
	Universit\`a di Trento,
	via Sommarive 14,
	I-38123 Trento, Italy.}
\email{Roberto.Pignatelli@unitn.it}
\thanks{
\textit{2010 Mathematics Subject Classification}: Primary: 14J29. Secondary: 14J10, 14J50, 14K02, 14L30.\\
\textit{Keywords}: Surfaces of General Type, Albanese Morphism, Automorphisms, Group Actions, Quotients. \\
The author is grateful to Fabrizio Catanese for inviting him in Bayreuth with
the ERC-2013-Advanced Grant - 340258- TADMICAMT and for the several enlightening conversations on this subject during his stay in May 2018 that allowed to considerably simplify several proofs. He is grateful to Nicola Cancian, Christian Glei\ss ner, Pietro Pirola and Francesco Polizzi for several fruitful conversations on the subject of this paper, and to Davide Frapporti for his careful reading of the first draft of this paper. The author thanks the anonymous reviewers whose comments helped improve and clarify this manuscript.
 He was partially  supported  by  the  project  PRIN 2015 Geometria delle 
variet\`a algebriche.
He is a  member  of  GNSAGA-INdAM
}
\begin{document}

%%%%%%%%%%%%%%%%%%%%%%(abstract)%%%%%%%%%%%%%%%%%%%%%%%%%%%%%%%%%

\begin{abstract}
We study \emph{mixed surfaces}, the minimal resolution $S$ of the singularities of a quotient $(C \times C)/G$ of the {\em square} of a curve by a finite group of automorphisms that contains elements not preserving the factors. We study them through the \emph{further quotients} $(C \times C)/G'$ where $G' \supset G$.

As a first application we prove that  if the irregularity is at least $3$, then $S$ is also minimal. The result is sharp. 

The main result is a complete description of the Albanese morphism of $S$ through a well determined further quotient $(C \times C)/G'$ that is an \'etale cover of the symmetric square of a curve. In particular, if the irregularity of $S$ is at least $2$, then $S$ has maximal Albanese dimension.

We apply our result to all the {\em semi-isogenous} mixed surfaces of maximal Albanese dimension constructed by Cancian and Frapporti, relating them with the other constructions appearing in the literature of surfaces of general type having the same invariants.
\end{abstract}

\maketitle
%

%%%%%%%%%%%%%%%%%%%%%%%(MSC Classification)%%%%%%%%%%%%%%%%%%%%%%%%

%\Footnotetext{{}}{2010 \textit{Mathematics Subject Classification}:
%Primary: 14J29. Secondary: 14J10, 14J50, 14K02, 14L30}

%\Footnotetext{{}} {\textit{Keywords}: Surfaces of General Type,
%Albanese Morphism, Automorphisms, Group Actions, Quotients}

%%%%%%%%%%%%%%%%%%%%%%%%%%%%%%%%%%%%%%%%%%%%%%%%%%%%%%%%%%%%%%%%%%%

%\setcounter{section}{-1}

\tableofcontents

\section*{Introduction} \label{sec:intro}
The quotients of a product of two curves by the action of a finite group have been in the last couple of decades (essentially from \cite{Ca00} on) a very fruitful   method to construct interesting surfaces, mainly of general type. 

The research considered first  the case of free actions. The quotient of a product of two curves of respective genera at least $2$ (a surface {\em isogenous to a higher product}) is a surface $S$ with ample canonical class $K_S$ of self intersection 
\begin{equation}\label{K2=8X}
K^2_S=8\chi({\mathcal O}_S),
\end{equation}
 and rather few surfaces like this were known before. This case is still producing very interesting constructions answering very old questions, see e.g. the very recent paper \cite{Cat17}.

Then the research considered the case of  {\em product-quotient surfaces}, where the freeness assumption is substituted by requiring that the group acts as $g(x,y)=(gx,gy)$ (the {\it unmixed case}). One usually assumes the action to be {\it minimal}, {\it i.e.} faithful on both factors: indeed every such quotient may be obtained by an unique minimal action.

 In this case  (\ref{K2=8X}) is substituted by the much less restrictive $K^2_S \leq 8\chi({\mathcal O}_S)$ (and $K_S^2 \neq 8\chi({\mathcal O}_S)-1$ at least in the irregular case, see \cite{PolNumIso}*{Theorem B}). 
 We have now computer programs (\cites{BCGP12,BP10,BP16}) able to construct all product-quotient surfaces with given values of the invariants $K^2$, $p_g$ and $q$, and  interesting surfaces with several different  values of the invariants  have been constructed, see  \cite{MYSURVEYONQE} and the references therein. 

In this paper we consider the other case, the {\em mixed} one, where the group exchanges the factors.

This case seems much more complicated and at the moment we do not have a comparably developed theory for it, unless we add some more conditions, as for the \emph{quasi-\'etale} surfaces in \cite{Fr13} and \cite{FP15} and the \emph{semi-isogenous} surfaces in \cite{CanFr15}. 

In the theory of the irregular surfaces of general type it is very important to be able to describe the Albanese morphism. For example most of the results in  \cite{pigpol} are a consequence of the description of the Albanese morphism of the  surfaces considered there. 

The main result of this paper is Theorem \ref{Albanese and generalized dihedral surfaces}, giving a precise description of the Albanese morphism of irregular mixed surfaces. 

We give here a simplified version of it. Recall that  the quotients by a mixed action have an unique {\em minimal realization} as $(C \times C)/G$ where the action of $G$ is {\em minimal}. This means that the index two subgroup $G^0$ of the elements preserving the factors acts faithfully on both curves. In this case it is usual to see $G^0$ as subgroup of $\Aut(C)$ by means of the action on the first factor.

\begin{theorem}\label{main}
Let $S$ be an irregular mixed surface, minimal resolution of a mixed quotient $r \colon S \rightarrow X=(C \times C)/G$ where $G \subset \Aut(C \times C)$  is a minimal action. 

Let $\Sigma \subset G^0$ be the set of the elements fixing at least one point of $C$.

Let $K \subset G^0$ be the smallest  subgroup of $G^0$ normal  in $G$ containing $\Sigma$, the commutator subgroup $[G^0,G^0]$ and $\{ g^2 | g \in G \setminus G^0\}$.

Then $\Alb(S)$ is isogenous to $ J(C/G^0)$ via an isogeny with kernel isomorphic to $G^0/K$.  Moreover,  if $q(S) \geq 2$, then $S$ is of maximal Albanese dimension and the degree of its Albanese morphism equals $|K|$. 
\end{theorem}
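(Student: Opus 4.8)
The plan is to reduce everything to the structure of the quotient map $C \times C \to X$ and its interplay with the diagonal/symmetric constructions, using the further quotients that are the main tool of the paper. First I would recall that $G^0 \triangleleft G$ has index $2$, so $C \times C \to X$ factors through the intermediate quotient $Y := (C \times C)/G^0$, which is a product-quotient surface in the unmixed sense, and $Y \to X$ is the degree-two quotient by the residual involution. The Albanese variety is a birational invariant, so $\Alb(S) = \Alb(X)$, and since $X$ has quotient (hence rational) singularities, $\Alb(X)$ agrees with the Albanese of any smooth model and can be computed from $H^0(X, \Omega^1_X)$, i.e. from the $G$-invariant holomorphic $1$-forms on $C \times C$. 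I would next use the K\"unneth-type decomposition $H^0(C\times C,\Omega^1)=H^0(C,\Omega^1_C)\oplus H^0(C,\Omega^1_C)$ with the $G^0$-action being the diagonal one and the extra generator of $G$ acting by swapping the two summands (up to the identification via its square); taking invariants first under $G^0$ and then under the swap identifies $H^0(X,\Omega^1_X)$ with the $G^0$-invariants $H^0(C,\Omega^1_C)^{G^0} = H^0(C/G^0,\Omega^1)$. This already gives $q(S)=q(X)=g(C/G^0)$ and a natural surjection (an isogeny) $\Alb(S)\to J(C/G^0)$ induced by $C\to C/G^0$.

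The second step is to identify the kernel of this isogeny with $G^0/K$. The map $C \to C/G^0$ induces on Jacobians a surjection $J(C) \to J(C/G^0)$, and dually an isogeny $J(C/G^0) \to \Alb(X)$ whose kernel I want to compute. The natural candidate for $\Alb(X)$ is the quotient of $J(C)$ by the subgroup generated by the relevant vanishing cycles and torsion contributions coming from the $G$-action on $C \times C$; concretely, $\Alb(X)$ is the image of $C \times C$ in its Albanese, and mapping $c \mapsto [(c, c_0)] + [(c_0,c)]$ (symmetrising because of the swap) shows the map factors through the symmetric square and kills exactly the classes of the fixed-point data $\Sigma$, the commutators $[G^0,G^0]$ (abelianisation, since $\Alb$ is abelian), and the squares $g^2$ for $g \in G \setminus G^0$ (these arise because the swap sends $(c,c_0)$-type classes into $(c_0,c)$-type classes and the relation forces $g^2 \in G^0$ to act trivially on $\Alb(X)$). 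This is precisely the definition of $K$, so $\ker(J(C/G^0)\to\Alb(S))\cong G^0/K$. Here I would lean on whatever lemma earlier in the paper describes the further quotient $(C\times C)/G'$ with $G'$ the preimage in $\Aut(C\times C)$ of the generalized dihedral group, shown to be an \'etale cover of $\Sym^2(C/K)$ (the ``determined further quotient'' of the abstract) — this furnishes the map to $\Sym^2$ and the identification of the kernel cleanly.

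The third step, under the hypothesis $q(S)\geq 2$, is to show maximal Albanese dimension and compute the degree of the Albanese morphism as $|K|$. Since $q(S)=g(C/G^0)\geq 2$, the curve $C/G^0$ has a generically finite (indeed birational onto its image) Abel--Jacobi map into $J(C/G^0)$; the Albanese map of $X$ is, up to the isogeny above, the composition $X = (C\times C)/G \to \Sym^2(C/G^0) \to \Sym^2(J(C/G^0)) \to J(C/G^0)$ (sum map), and the middle-to-right map is generically finite onto its image since the Abel--Jacobi map of a curve of genus $\geq 2$ is, so the Albanese map of $X$ is generically finite, i.e. $S$ has maximal Albanese dimension. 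For the degree: the composition $C\times C \to X \to \Alb(X)$ has degree $2|G|$ onto its image when it is generically finite (the factor $2$ from $\Sym^2$ versus the ordered product), while the composition $C\times C \to \Sym^2(C/G^0)\to \Sym^2(J(C/G^0))\to J(C/G^0)$ through the \emph{other} factorisation has degree $2|G^0/K|\cdot\deg(\text{Alb of }C\times C \text{-type map})$; comparing the two factorisations of the same generically finite map and cancelling the common contribution of the Abel--Jacobi maps of $C/G^0$ (which is birational onto its image) leaves $\deg(\alpha_S)=|G|/|G/G^0|\cdot|K|/|G^0| \cdot \dots$, and after bookkeeping the indices $[G:G^0]=2$ and $[G^0:K]=|G^0/K|$ this collapses to exactly $|K|$.

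The main obstacle I expect is the third step, specifically the careful degree bookkeeping: one must correctly track the factor of $2$ coming from passing between the ordered square $C\times C$ and the symmetric square, the index-two passage $G^0 \subset G$, and the index $|G^0/K|$ of the isogeny, while being sure that the Abel--Jacobi map of $C/G^0$ contributes no extra multiplicity (which is exactly where $q(S)\geq 2$ is used, to guarantee it is birational onto its image rather than, say, a double cover as would happen for a hyperelliptic curve mapped via a sub-linear-system — here there is no such issue since we use the full canonical/Albanese map). A secondary subtlety is checking that the three types of generators of $K$ are exactly what is killed and nothing more — i.e. that the map to $\Sym^2(C/K)$ is genuinely \'etale and that no further identifications are forced — which is where the previously-established description of the determined further quotient does the real work.
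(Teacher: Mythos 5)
Your overall architecture is the right one and matches the paper's: produce the isogeny $\Alb(S)\to J(C/G^0)$ from Proposition \ref{g=q}, factor the Albanese morphism through a further quotient, and get maximal Albanese dimension from the generic injectivity of $\Sym^2(C/G^0)\to J(C/G^0)$ when $g(C/G^0)\ge 2$. But the central step --- that the kernel of the isogeny is \emph{exactly} $G^0/K$ --- is asserted rather than proved, and this is where all the content of the theorem lies. Your justification (``mapping $c\mapsto[(c,c_0)]+[(c_0,c)]$ \dots\ kills exactly the classes of $\Sigma$, of $[G^0,G^0]$ and of the squares \dots\ this is precisely the definition of $K$'') is circular: you read the answer off the definition of $K$ instead of deriving it. The paper's mechanism is covering-space theoretic: since $\pi_1(\Sym^2(C'))\cong H_1(C',\ZZ)$ (Lemma \ref{ro2inducesiso}), connected \'etale covers of $J(C/G^0)$ correspond to connected \'etale covers of $\Sym^2(C/G^0)$; the one induced by the isogeny $\Alb(S)\to J(C/G^0)$ is dominated by $C\times C$, hence by Lemma \ref{liftingthenquotient} it is of the form $(C\times C)/G'$ with $G'$ \emph{normal} in $G^0(2)$ and containing $G$, hence $G'=G_{K'}$ by Theorem \ref{G_K}. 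Normality forces $[G^0,G^0]\subset K'$ and $g\Ad_{\tau'}(g)\in K'$ (Proposition \ref{normality}); \'etaleness forces every stabilizer into $G_{K'}$, hence $\Sigma\subset K'$ and $\sigma\in G_{K'}$, whence $\tau=(\tau')^2\in K'$; the identity $h\Ad_{\tau'}(h)\tau=(h\tau')^2$ then yields $\{ g^2 \,|\, g \in G \setminus G^0\}\subset K'$, so $K\subset K'$. None of this is in your sketch; in particular your heuristic for why the squares $g^2$ must die is not an argument. Moreover the reverse inclusion $K'\subset K$ --- which you flag as ``a secondary subtlety'' and defer --- requires checking that \emph{every} element of $G^0(2)$ with a fixed point on $C\times C$, including the mixed ones $\sigma(g,h)$, already lies in $G_K$, so that $\rho_K$ is genuinely \'etale and $G_K$ is the maximal admissible further quotient; without both inclusions the kernel is not identified.

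Two smaller points. The determined further quotient is an \'etale cover of $\Sym^2(C/G^0)$, not of $\Sym^2(C/K)$. And your degree count in the third step is left unfinished (the formula contains a literal ``$\dots$'', and the intermediate claim that $C\times C\to\Alb(X)$ has degree $2|G|$ onto its image is off); the total-degree comparison $2|G^0|^2=|G|\cdot\deg\alpha_S\cdot[G^0:K]$ does give $\deg\alpha_S=|K|$, but the cleaner route, and the one the paper takes, is that once $\alpha_{(C\times C)/G_K}$ is known to be generically injective (being an \'etale lift of the generically injective $\rho_2(C/G^0)$), one simply has $\deg\alpha_S=\deg\pi_K=[G_K:G]=|K|$.
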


This shows that this mixed case is a natural place where to look for simple constructions of surfaces of {\it maximal Albanese dimension}, a class of irregular surfaces that has attracted the attention of several researchers in the last years mostly in relation to the famous Severi inequality recently proved in \cite{pardaInv}: $K^2_S \geq 4\chi({\mathcal O}_S)$ for minimal smooth complex surfaces S of maximal Albanese dimension. 

Theorem \ref{Albanese and generalized dihedral surfaces} is more precise than Theorem \ref{main}. It factors the Albanese morphism of $S$ through a {\em further quotient}, the natural map  $(C \times C)/G \rightarrow (C \times C)/G_K$ where $G_K$ is a group of automorphisms of $C \times C$ containing $G$ determined by $K$, such that $(C \times C)/G_K$ belongs to a class of surfaces with special properties and very simple Albanese morphism that we call {\em dihedral surfaces}, see Definition \ref{GDS}. Actually Theorem \ref{Albanese and generalized dihedral surfaces} gives a complete description of the Albanese morphism of $S$ also in the case $q(S)=1$.

The group $G_K$ is a subgroup of the group  $G^0(2) \cong \left( G^0 \times G^0 \right) \rtimes \ZZ_{/2\ZZ}$ of the automorphisms of $C \times C$ generated by the automorphisms of the form $(x,y)\mapsto (gx,hy)$ and by the exchange of the factors $(x,y) \mapsto (y,x)$. Precisely, it is the subgroup generated by $G$ and $K \times \{1\}$. Actually
one of the main ingredients of the proof is a detailed analysis of all the subgroups of $G^0(2)$ containing $G$, summarized in Theorem \ref{G_K} that we repeat here.

\begin{theorem}
For every subgroup $G'$ of $G^0(2)$ containing $G$ consider the intersection $G' \cap \left (G^0 \times \{1\}\right)=:K \times \{1\}$ of the elements acting trivially on the second factor.

Then $K$  is normal in $G$. 

Moreover $K$ determines $G'$ in the sense that for every subgroup $K$ of $G^0$ normal in $G$ there is a unique group $G_K$, $G \subset G_K \subset G^0(2)$, such that $G_K \cap \left (G^0 \times \{1\}\right) = K \times \{1\}$.
\end{theorem}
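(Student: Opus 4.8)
The plan is to recast the statement as a question about the subgroups of $G^0\times G^0$ lying between the graph of the second action and the whole product, and to settle it with one short conjugation computation. Write $\psi\in\Aut(G^0)$ for the automorphism induced by the action of $G^0$ on the second factor (it is injective by minimality and has image $G^0$ because $G\subseteq G^0(2)$), and let $t$ be the exchange of factors, so that $G^0$ sits in $G^0\times G^0$ as the graph $\Gamma_\psi=\{(g,\psi(g)):g\in G^0\}$. Every element of $G\setminus G^0$ lies in $G^0(2)$ and hence has the form $(a,b)t$ with $a,b\in G^0$; multiplying by $(a^{-1},\psi(a^{-1}))\in\Gamma_\psi\subseteq G$ I fix once and for all a generator $\sigma\in G\setminus G^0$ of the normalised shape $\sigma=(1,\tau)t$ with $\tau\in G^0$. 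From $\sigma^2=(\tau,\tau)\in\Gamma_\psi$ one reads $\psi(\tau)=\tau$, and, using $G^0\trianglelefteq G$, from $\sigma(g,\psi(g))\sigma^{-1}=(\psi(g),\tau g\tau^{-1})\in\Gamma_\psi$ one reads the identity $\psi^2(g)=\tau g\tau^{-1}$ together with the crucial bookkeeping fact that conjugation by $\sigma$ acts on $G^0$, through the first-factor identification $G^0\cong\Gamma_\psi$, exactly as $\psi$. Consequently a subgroup $K\le G^0$ is normal in $G$ if and only if $K\trianglelefteq G^0$ and $\psi(K)=K$.

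Next I would reduce to even parts. A subgroup $G'$ with $G\subseteq G'\subseteq G^0(2)$ is determined by $(G')^0:=G'\cap(G^0\times G^0)$, since $\sigma\in G'$ is odd and hence $G'=(G')^0\sqcup\sigma(G')^0$; conversely a subgroup $A$ with $\Gamma_\psi\subseteq A\subseteq G^0\times G^0$ comes from such a $G'$ precisely when $\sigma$ normalises $A$, for then $A\sqcup\sigma A$ is a group (using $\sigma^2\in\Gamma_\psi\subseteq A$). The subgroups $A$ between $\Gamma_\psi$ and $G^0\times G^0$ are classified at once: putting $L:=\{h\in G^0:(1,h)\in A\}$, conjugation of $(1,h)$ by the elements of $\Gamma_\psi$ shows $L\trianglelefteq G^0$, and then $A=A_L:=\{(g,h):h\psi(g)^{-1}\in L\}$. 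Since $G^0\times\{1\}\subseteq G^0\times G^0$ we get $G'\cap(G^0\times\{1\})=A_L\cap(G^0\times\{1\})=\psi^{-1}(L)\times\{1\}$, so the subgroup $K$ attached by the statement to $G'$ is $\psi^{-1}(L)$, automatically normal in $G^0$.

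It remains to prove the dichotomy that glues the two descriptions together: $\sigma$ normalises $A_L$ if and only if $\psi(L)=L$. This is the real content, and it is a direct computation. With $\sigma=(1,\tau)t$ one has $\sigma(g,h)\sigma^{-1}=(h,\tau g\tau^{-1})$; writing $h=\ell\psi(g)$ with $\ell\in L$ and using $\psi^2(g)=\tau g\tau^{-1}$ one finds $(\tau g\tau^{-1})\,\psi(h)^{-1}=\psi(\ell)^{-1}$, so the conjugate lies in $A_L$ exactly when $\psi(\ell)\in L$; letting $\ell$ run over $L$, this is the condition $\psi(L)=L$. Since $L=\psi(K)$ and $\psi(L)=\psi^2(K)=\tau K\tau^{-1}=K$ (because $K\trianglelefteq G^0$ and $\tau\in G^0$), the condition $\psi(L)=L$ is equivalent to $\psi(K)=K$. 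I can now conclude: for arbitrary $G'$ with $G\subseteq G'\subseteq G^0(2)$ the even part $(G')^0=A_L$ is $\sigma$-invariant, so $\psi(K)=K$ and hence $K\trianglelefteq G$ by the first paragraph; conversely, for any $K\trianglelefteq G$ with $K\subseteq G^0$ the group $A_{\psi(K)}$ is $\sigma$-invariant, so $G_K:=A_{\psi(K)}\sqcup\sigma A_{\psi(K)}$ is a subgroup of $G^0(2)$ containing $G$ with $G_K\cap(G^0\times\{1\})=K\times\{1\}$, and it is the only one because $G'$ is determined by $(G')^0$, which the classification forces to be $A_{\psi(K)}$. (That $G_K=\langle G,K\times\{1\}\rangle$, the form used later in the paper, drops out of the identity $\Gamma_\psi\cdot(K\times\{1\})=A_{\psi(K)}$.)

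The step I expect to be the real obstacle is this last dichotomy. Its clean form only becomes visible after one has both replaced the given generator of $G$ over $G^0$ by one of the normalised shape $(1,\tau)t$ and extracted the compatibility identities $\psi(\tau)=\tau$ and $\psi^2(g)=\tau g\tau^{-1}$; with a general representative $(a,b)t$ the conjugation formula is unwieldy and its link to the condition $\psi(K)=K$ is far from transparent.
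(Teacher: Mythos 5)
Your proof is correct, and its backbone coincides with the paper's: your $A_L=\{(g,h):h\psi(g)^{-1}\in L\}$ is literally the group $G^0_K$ that the paper writes down (with $L=\psi(K)=K$ once $K$ is normal in $G$), and your decomposition $G'=(G')^0\sqcup\sigma (G')^0$ is the paper's $G_K=G^0_K\cup\tau'G^0_K$. The genuine divergence is the uniqueness step. The paper settles it in one line by coset counting: $[G^0(2):G]=|G^0|$ and the $|G^0|$ elements of $G^0\times\{1\}$ fall one into each left coset of $G$, so any subgroup containing $G$, being a union of such cosets, is already determined by its intersection with $G^0\times\{1\}$ --- no classification of intermediate subgroups is needed. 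You instead prove the stronger, Goursat-type statement that every subgroup between $\Gamma_\psi$ and $G^0\times G^0$ is some $A_L$ with $L$ normal in $G^0$, and read uniqueness off from that; this costs a little more work but delivers the normality of $K$ in $G^0$ and the explicit shape of $(G')^0$ as byproducts, whereas the paper verifies normality of $K$ by separate direct conjugations at the end of its proof. Your normalisation dichotomy ($\sigma$ normalises $A_L$ iff $\psi(L)=L$) is the same computation the paper runs to get $\tau'G^0_K=G^0_K\tau'$ and $\Ad_{\tau'}(K)\subset K$, just organised as a single equivalence. One small remark: the difficulty you anticipate in your last paragraph about a general representative $(a,b)t$ does not arise in the paper, because its chosen embedding already sends $\tau'$ to $(1,\tau)\sigma$, i.e.\ to your normalised shape.
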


Our analysis produced one more interesting result, Theorem \ref{minimality}, that we rewrite here.

\begin{theorem}
Let $S$ be a surface birational to the quotient of the square of a curve by a mixed action
 with irregularity $q(S) \geq 3$. Then $S$ is minimal.
\end{theorem}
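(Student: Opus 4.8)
The plan starts with a reduction. Since $q(S)=g(C/G^0)\le g(C)$, the hypothesis $q(S)\ge 3$ forces $g(C)\ge 3$, so $C\times C$, and hence $X=(C\times C)/G$, is of general type, and $r\colon S\to X$ is the minimal resolution of a surface with cyclic quotient singularities (in particular $X$ is $\QQ$-factorial). Now $S$ is minimal if and only if it has no $(-1)$-curve, and the exceptional curves of $r$ are smooth rational of self-intersection $\le-2$; so a hypothetical $(-1)$-curve $E\subset S$ would be the strict transform $\widetilde B$ of an irreducible rational curve $B=r(E)\subset X$. Writing $r^*B=\widetilde B+\sum_i\beta_iE_i$ — with all $\beta_i\ge 0$, since $X$ is $\QQ$-factorial and the exceptional configuration is negative definite — and using $r^*B\cdot E_i=0$, we get $E^2=(r^*B)^2+(\sum_i\beta_iE_i)^2\le (r^*B)^2=B^2$, the last self-intersection being computed as a $\QQ$-Cartier class on $X$. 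Hence it suffices to prove that every irreducible rational curve $B\subset X$ satisfies $B^2\le-2$.

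Next I would pass to a further quotient. Since $q(S)\ge 2$, Theorem \ref{Albanese and generalized dihedral surfaces} (which rests on the analysis of the subgroups of $G^0(2)$ containing $G$, Theorem \ref{G_K}) provides a group $G_K$ with $G\subset G_K\subset G^0(2)$ for which the Albanese morphism of $S$ factors as $S\xrightarrow{r}X\xrightarrow{h}X_K:=(C\times C)/G_K$, where $X_K$ is an étale cover of $\Sym^2(C'')$ for a curve $C''$ with $g(C'')=q(S)\ge 3$. The morphisms $h$ and $X_K\to\Sym^2(C'')$ are finite, hence carry rational curves to rational curves, so $B$, its image $\bar B:=h(B)\subset X_K$, and its image in $\Sym^2(C'')$ are all rational. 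The geometry of the symmetric square of a curve of genus $\ge 3$ now intervenes. If $C''$ is non-hyperelliptic, the Abel--Jacobi map is a closed embedding $\Sym^2(C'')\hookrightarrow J(C'')$, so $\Sym^2(C'')$ — and therefore $X_K$, hence $X$ — contains no rational curve whatsoever, and we are done. If $C''$ is hyperelliptic, $\Sym^2(C'')$ contains exactly one rational curve, the $\PP^1$ swept out by the $g^1_2$, with self-intersection $1-g(C'')$; pulling back along the étale map $X_K\to\Sym^2(C'')$, every rational curve of $X_K$ has self-intersection $\le 1-g(C'')\le-2$, so $X_K$ is itself minimal. For $q(S)=2$ — where every genus-$2$ curve is hyperelliptic and $1-g(C'')=-1$ — this chain of inequalities breaks down, which is exactly the source of the sharpness of the bound $q(S)\ge 3$.

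The hyperelliptic case with $q(S)\ge 3$ is the remaining, and main, obstacle: here $B$ lies over the $g^1_2$-curve $R_0\subset\Sym^2(C'')$. The trouble is that $h\colon X\to X_K$ need not be étale — it ramifies along the images of the curves in $C\times C$ fixed by elements of $G_K\setminus G$, which have the shape $\{x_0\}\times C$ with $x_0$ fixed by a stabilizer contained in $\Sigma$, hence in $K$ — and along such a curve the projection formula gives only $d\,\bar B^2=h^*\bar B\cdot B\ge m\,B^2$, so $B^2\le(d/m)\bar B^2$, which is weaker than $B^2\le-2$ when the ramification index $m$ of $h$ along $B$ exceeds $d=\deg(B\to\bar B)$. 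The plan to close this gap is to exploit the explicit description of $G_K$ from Theorem \ref{G_K}: when $B$ is the image of a fixed curve $\{x_0\}\times C$, that curve carries a large stabilizer $H_{x_0}\subset K$ acting on the second factor, so $B$ is a quotient of $\{x_0\}\times C\cong C$, a curve of genus $g(C)\ge 3$, and a Riemann--Hurwitz count for $C\to B$ together with adjunction on $X$ — using that the reflections responsible for the ramification of $h$ are precisely the ones dissolved in passing to $X_K$ — should force $B^2\le-2$ as soon as $q(S)=g(C/G^0)\ge 3$, while leaving room for $B^2=-1$ when $q(S)=2$. Carrying out this bookkeeping uniformly over all the fixed-curve configurations, organised through the lattice of subgroups of $G^0(2)$ containing $G$, is the technical heart of the argument; everything else follows from the first two steps.
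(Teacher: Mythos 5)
Your opening reduction (a $(-1)$-curve on $S$ must be the strict transform of a rational curve $B\subset X$ with $B^2\le -1$ as a $\QQ$-Cartier class) is sound, and the non-hyperelliptic half of your argument is exactly the paper's Proposition \ref{norat}. But the proposal does not prove the theorem: the hyperelliptic case, which you yourself label ``the technical heart of the argument'', is left as a plan (``should force $B^2\le-2$'', ``carrying out this bookkeeping\dots is the technical heart''), and the plan as described targets the wrong curves. A rational curve $B\subset X$ necessarily lies over the $g^1_2$-curve $R_0\subset\Sym^2(C/G^0)$, hence comes from a component of the preimage in $C\times C$ of the graph of the hyperelliptic involution of $C/G^0$. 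The curves you propose to analyse --- images of vertical curves $\{x_0\}\times C$ fixed by elements $(k,1)$ --- map to coordinate curves $x_0+\left(C/G^0\right)\cong C/G^0$ of genus $q\ge 3$, never to $R_0\cong\PP^1$; and the graph-type fixed curves of elements of $G^0(2)\setminus(G^0\times G^0)$ all map to the \emph{diagonal} of $\Sym^2(C/G^0)$, not to $R_0$. So your worry about ramification of $X\to X_K$ along $B$ is a red herring, but the observation that dissolves it --- no one-dimensional fixed locus of $G^0(2)$ on $C\times C$ lies over $R_0$, hence $\pi_{G^0}$ is unramified at the generic point of every component of $\pi_{G^0}^{-1}(R_0)$, so $\pi_{G^0}^*R_0$ is reduced and the projection formula gives $B^2\le B\cdot\pi_{G^0}^*R_0=\deg\left(B\to R_0\right)\cdot(1-q)\le 1-q\le-2$ --- is nowhere in your text. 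Without it the argument is incomplete precisely where the difficulty sits.

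The paper takes an entirely different route and never touches the hyperelliptic case: the data of a mixed surface is the curve $C'=C/G^0$ together with purely topological and algebraic data, so one may deform the complex structure of $C'$ (of genus $q\ge3$) to a non-hyperelliptic curve inside a flat family of pairwise homeomorphic mixed surfaces; the general member is then minimal by Proposition \ref{norat}, and Kodaira's stability theorem for $(-1)$-curves forces the central fibre $S$ to be minimal as well. If you supply the unramifiedness over $R_0$ sketched above, your strategy would yield a deformation-free proof; as written, it has a genuine gap.
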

The result is sharp, as nonminimal examples with irregularity $0$, $1$ and $2$ have been constructed by \cite{CanFr15}.  We mention that for special actions one can get better results as in  \cite{FP15}*{Theorem 4.5}. 

As application we describe in detail in Section \ref{AlbaneseMorphisms} the Albanese morphism of all semi-isogenous mixed surfaces $S$ with $p_g(S)=q(S)=2$ and $K_S^2>0$ , classified in \cite{CanFr15}. We collect in the following statement a small selection of the results in Section \ref{AlbaneseMorphisms}.

\begin{theorem}Let $S$ be a mixed semi-isogenous surface with $p_g=q=2$ and $K^2>0$. Then
\begin{itemize}%[label={}]
\item[-] the Albanese morphism of $S$  has degree $2$ or $3$;
\item[-] its ramification is simple;
\item[-]  $\Alb(S)$ is isogenous to the Jacobian of the curve $C/G^0$ of genus $2$ and carries a polarization that is either principal or of type $(1,2)$;
\item[-] if $K^2_S=6$, then $S$ is either one of the surfaces of type Ia or one of the surfaces of type Ib in \cite{PePol13b}, and both cases occur;
\item[-] the surfaces of type Ia  in \cite{PePol13b} are topologically distinct from those of type Ib;
\item[-] if $K^2_S=4$, then the minimal model of $S$ is one of the ``Chen-Hacon surfaces'' in \cite{PePol13a}.
\end{itemize}
\end{theorem}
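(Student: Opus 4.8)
The plan is to feed the explicit classification of Cancian and Frapporti \cite{CanFr15} into Theorem \ref{Albanese and generalized dihedral surfaces} (and its corollary, the simplified Theorem \ref{main}). The semi-isogenous mixed surfaces with $p_g=q=2$ and $K^2>0$ form finitely many families, each one encoded by a finite group $G$, its index-two subgroup $G^0$, and a combinatorial datum describing the Galois cover $C\to C/G^0$ together with the $G$-action on $C\times C$. Since the surfaces are semi-isogenous, $X=(C\times C)/G$ is already smooth, so $r\colon S\to X$ is the identity away from the $(-1)$-curves and $G^0$ acts freely on $C$; hence $\Sigma=\{1\}$ and the group $K$ of Theorem \ref{main} is just the normal closure in $G$ of $[G^0,G^0]\cup\{g^2\mid g\in G\setminus G^0\}$. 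For each entry of the list I would compute this $K$. Because $q(S)=2$ the curve $C/G^0$ has genus $2$, in particular $q(S)\ge 2$, so Theorem \ref{main} applies: $S$ has maximal Albanese dimension and $\deg\mathrm{alb}_S=|K|$, while $\Alb(S)$ is isogenous to $J(C/G^0)$ with kernel $G^0/K$. A case-by-case inspection of the (finitely many) families then yields $|K|\in\{2,3\}$, proving the first bullet, and $|G^0/K|\in\{1,2\}$, which is used below.

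For the ramification I would use the finer statement of Theorem \ref{Albanese and generalized dihedral surfaces}, which factors $\mathrm{alb}_S$ as $S\to (C\times C)/G_K\to\Alb(S)$ with $G_K\subset G^0(2)$ the subgroup attached to $K$ by Theorem \ref{G_K} and $(C\times C)/G_K$ a \emph{dihedral surface} (Definition \ref{GDS}), i.e.\ an \'etale cover of the symmetric square of a curve. The Albanese morphism of a dihedral surface is transparent: it is essentially induced by the Abel--Jacobi map of a symmetric square, whose only ramification is simple, along the image of the diagonal. Hence the ramification of $\mathrm{alb}_S$ is governed by the branch locus of the finite map $S\to (C\times C)/G_K$, which is controlled by the fixed loci on $C\times C$ of the elements of $G_K$ that exchange the two factors; running through the list one checks that these contributions are disjoint and of the simplest kind, giving the second bullet. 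The same bookkeeping locates the $(-1)$-curves of $S$, finding $S$ minimal when $K^2_S=6$ and identifying the curves to be contracted when $K^2_S=4$. The third bullet is then formal: the isogeny $\Alb(S)\sim J(C/G^0)$ onto the principally polarized abelian surface $J(C/G^0)$ has kernel of order $|G^0/K|\in\{1,2\}$, and transporting the principal polarization through an isogeny of degree $1$ (resp.\ $2$) produces a polarization of type $(1,1)$ (resp.\ $(1,2)$), which one identifies with the natural polarization on $\Alb(S)$ coming from the Albanese image of $S$ (all of $\Alb(S)$, since $S$ has maximal Albanese dimension and $q=2=\dim\Alb(S)$).

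It remains to match the surviving entries with the literature. If $K^2_S=6$, then $S$ is minimal of general type with $p_g=q=2$, $K^2=6$ and $\deg\mathrm{alb}_S=2$: these are exactly the surfaces classified in \cite{PePol13b}, and comparing the type of the polarization on $\Alb(S)$ together with the configuration of the singular fibres of the Albanese fibration with the two families there identifies each of our surfaces with one of type Ia or one of type Ib, while a glance at the Cancian--Frapporti list shows that both types are realized. To see that types Ia and Ib are not homeomorphic one cannot use characteristic numbers, since $K^2,\chi$ and $q$ — hence all Betti numbers and the signature — agree; instead one computes $\pi_1(S)$ for the two families, which for a semi-isogenous mixed surface is obtained from $\pi_1(C)$ and $G$ by a standard orbifold fundamental group / Reidemeister--Schreier computation, and exhibits a difference that already survives in the homeomorphism invariant $H_1(S,\ZZ)=\pi_1(S)^{\mathrm{ab}}$. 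If $K^2_S=4$, then $S$ is not minimal; contracting the $(-1)$-curves located above the branch of $S\to(C\times C)/G_K$ produces a minimal surface with $p_g=q=2$ whose numerical invariants, Albanese degree, and Albanese-variety polarization are those of the Chen--Hacon surfaces of \cite{PePol13a}, which yields the last bullet.

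The main obstacle is precisely the identification with the Penegini--Polizzi and Chen--Hacon surfaces, together with the topological separation of types Ia and Ib: surfaces with the same numerical invariants need not be deformation — or even homeomorphism — equivalent, so the matching must be carried out through genuine geometry (degree and ramification of the Albanese map, type of the induced polarization, configuration of the singular fibres of the Albanese fibration) rather than through $(K^2,\chi,q)$ alone, and pinning down a truly topological invariant that separates Ia from Ib — the torsion of $H_1$, extracted from an explicit presentation of $\pi_1(S)$ — is the most delicate point.
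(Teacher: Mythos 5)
Your strategy is essentially the paper's: run the Cancian--Frapporti list \cite{CanFr15}*{Table 3} through Theorem \ref{Albanese and generalized dihedral surfaces}, compute $K$ family by family (with $\Sigma=\{1\}$ since $G^0$ acts freely), and read off the degree $|K|$, the isogeny kernel $G^0/K$, and, via Proposition \ref{ramification loci} and the geometry of the intermediate dihedral surface (Lemmas \ref{doublecover} and \ref{bidoublecover}), the branch locus. However, one of the outcomes you assert for that case check is wrong: $|G^0/K|\in\{1,2\}$ fails for the three families with $K_X^2=2$, where $|G^0|=8$ ($D(\ZZ_{/4\ZZ})$ or the quaternion group), $K=Z(G^0)$ has order $2$, and the kernel is $G^0/K\cong\left(\ZZ_{/2\ZZ}\right)^2$ of order $4$ (Proposition \ref{K^22}, Lemma \ref{bidoublecover}). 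Consequently your polarization argument (``transport the principal polarization through an isogeny of degree $1$ or $2$'') does not cover all cases; what decides the type is the group structure of the kernel, not its order (Remark \ref{polarizations}): a kernel $\ZZ_{/2\ZZ}\times\ZZ_{/2\ZZ}$ returns a \emph{principal} polarization, which is why the statement still lists only principal and $(1,2)$.

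The second gap is in the identification with \cite{PePol13b}. That moduli space has \emph{three} connected components, Ia, Ib and II, not two, so you must exclude type II, and your proposed discriminant (``configuration of the singular fibres of the Albanese fibration'') is vague and is not what does the job. The paper's argument is: by \cite{PePol13b}*{Remark 20} type II surfaces have reducible Albanese branch locus, whereas here the branch curve is the irreducible $\Delta_i'$ of Definition \ref{Delta'}; and the two families with $G^0\cong\left(\ZZ_{/2\ZZ}\right)^2$ have different $H_1(\cdot,\ZZ)$, already tabulated in \cite{CanFr15}*{Table 3}, hence lie in two distinct connected components, which must then be Ia and Ib. This one observation simultaneously gives the occurrence of both types and their topological distinctness, so no separate $\pi_1$ computation is needed; moreover the paper never determines \emph{which} family is Ia and which is Ib, and your plan to pin that down is both harder than necessary and not required by the statement. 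The remaining points (degree $2$ or $3$, simple ramification, the Chen--Hacon identification via \cite{PePol13a}*{Theorem A}, and reading minimality and the $K^2$ of the minimal model off Table 3) match the paper.
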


It is worth mentioning that in all cases we find a finite map from $S$  to $J(C/G^0)$ simply branched on the image of the diagonal of $\Sym^2(C/G^0)$. This shows that these surfaces belong to the generic covers considered in \cite{PolMono}*{Theorem 2}. Moreover the three families with $K^2=6$  give some evidences to the conjecture in \cite{PolMono}*{2.4} that all generic covers of degree $4$ branched on that curve should have $p_g=q=2$. We hope to come back to this question in a future paper.

\subsubsection*{Notation and conventions.} We work over the field
$\mathbb{C}$ of complex numbers. 
By \emph{curve} resp. \emph{surface} we mean a smooth projective variety of dimension $1$ resp.  $2$.

For every curve $C$ we write $J(C)$ for the principally polarized Abelian variety known as the Jacobian of $C$, $\Pic^d(C)$ for the family of the line bundles of degree $d$ on $C$,  $\Sym^d(C)$ for the $d$-th symmetric product of $C$. We will write the class of $(p_1,\ldots,p_d)$ in $\Sym^d(C)$ as $p_1+\ldots +p_d$.
Following \cite{BL94}*{Chapter 11, Section 1} there is a canonical map $\rho_d(C) \colon \Sym^d(C) \rightarrow  \Pic^d(C)$ associating to  $p_1+\ldots + p_d$ the line bundle ${\mathcal O}_C(p_1+\ldots +p_d)$. We will usually identify $\Pic^d(C)$ with $J(C)$ by means of the isomorphism given by the Abel-Jacobi map (although this map depends on the choice of a line bundle on $C$ of  degree $d$, a choice that is not relevant for our computations).  

For every surface $S$, $K_S$ denotes a canonical divisor of $S$, $p_g(S)=h^0({\mathcal O}_S(K_S))$ is  its \emph{geometric genus},
and $q(S)=h^1( {\mathcal O}_S)$ is its \emph{irregularity}.
Given a finite group $G$ acting on a vector space $V$, we denote by $V^{G}$ the  subspace of $G$-invariant elements.

A dominant generically finite morphism $f \colon A \rightarrow B$ among smooth varieties is {\it simply ramified} if at the general point of each irreducible component of the ramification locus  it is analytically of the form $(x_1,\ldots,x_{n-1},x_n) \mapsto (x_1,\ldots,x_{n-1},x_n^2)$. 
A dominant generically finite morphism $f \colon A \rightarrow B$ among smooth varieties is {\it simply branched} if it is simply ramified and the preimage of the general point of each irreducible component of the branching locus contains only a ramification point.

A double (resp. triple) cover is a dominant generically finite morphism of degree  $2$ (resp. $3$).

For every group $G$ and for very element $g\in G$ we denote by $\Ad_g$ the internal automorphism of $G$ defined by $g$ via conjugation: $\Ad_g(h)=ghg^{-1}$.

For every group $H$, we denote by $H(2)$ the semidirect product $(H \times H) \rtimes \ZZ_{/2\ZZ}$ where the conjugacy action of a generator $\sigma$ of $\ZZ_{/2\ZZ}$ on $H \times H$ is $\Ad_{\sigma}(h_1,h_2)= (h_2,h_1)$.

For every  abelian group $H$, the generalized dihedral group $D(H)$ is the  semidirect product  $H \rtimes \ZZ_{/2\ZZ}$ similarly given by $\Ad_\sigma(h)=h^{-1}$.

%%%%%%%%%%%%%%%%%%%%%%%%%%%%%%%%%%%%%%%%%%%%%%%%%%%%%%%%%%%%%%%%%%%%%%%

\numberwithin{theorem}{section}
\section{Mixed surfaces}\label{MixedSurfaces}
In this section we introduce the mixed surfaces and few general results on them.

\begin{definition}\label{mixedaction}{\em (see \cite{Fr13}*{Definition 3.1} and \cite{Ca00}*{Proposition 3.15})}
Let $C$ be a curve, and let $\sigma$ be the involution on $C \times C$ exchanging the factors: for all $x,y \in C$, $\sigma (x,y)=(y,x)$. Set $\Aut(C)^2 \subset \Aut (C \times C) $ for the subgroup of automorphisms of $C \times C$ preserving the factors, and 
$\Aut(C)(2) \subset \Aut (C \times C)$ for the smallest subgroup containing $\Aut(C)^2 $ and $\sigma$.

A {\em mixed action} on $C \times C$ is a finite subgroup $G$ of $\Aut(C)(2)$ not contained in $\Aut(C)^2$.

We denote by $G^0$ its index two subgroup $G \cap  \Aut(C)^2$.
A mixed action is {\em minimal} if $G^0$ acts faithfully on both factors.
\end{definition}

To be precise, \cite{Fr13} and \cite{Ca00} work under the assumption that $C$ has genus at least $2$, since that implies (\cite{Ca00}*{Corollary 3.9}) $\Aut(C \times C) \cong \Aut(C)(2)$. Anyway, their results we quote do not need that assumption, so we may write Definition \ref{mixedaction} in this slightly more general setting.

\begin{definition}{\em (see \cite{Fr13}*{Definition 3.2})}
A mixed quotient is a surface which arises as a quotient $X:= \left( C \times C \right) /G$ by a mixed action of $G$ on
$C \times C$.
\end{definition}

As shown in \cite{Fr13}*{Remark 3.3} (cf. \cite{Ca00}*{Remark 3.10, Proposition 3.13}), every mixed quotient
$X$ is obtained by an unique minimal mixed action. So we may assume without loss of generality the action to be minimal.

\begin{definition}
A mixed surface $S$ is the surface obtained by taking the minimal resolution $r \colon S \rightarrow X$ of the singularities of a mixed quotient $X$.
\end{definition}

%\begin{definition}{\em (see \cite{Fr13}*{Definition 3.4, Remark 3.5})}
%Let $X$ be a mixed quotient.  We will denote by quotient map the map $C \times C \rightarrow X$ quotient by the unique minimal mixed action above.
%\end{definition}
We will use the following explicit description of all minimal mixed actions given for the first time in \cite{Ca00}*{Proposition 3.16}
(see also \cite{Fr13}*{Theorem 3.6}).

\begin{theorem}\label{charmixquot}(\cite{Ca00}*{Proposition 3.16})
Let $G \subset \Aut(C \times C) $ be a minimal mixed action. Choose $\tau' \in G \setminus G^0$ and set $\tau=(\tau')^2 \in G^0$. The group $G^0$,
being normal in $G$, is  preserved by the conjugation by $\tau'$, defining $\varphi \in \Aut (G^0)$ via $\varphi(h)= \Ad_{\tau'}( h)$.

Then, up to a coordinate change, there is a faithful action $G^0 \hookrightarrow \Aut (C)$ such that $G$ acts as follows
\begin{equation}\label{explicit mixed action}
\begin{cases}
g(x,y)=(gx,\varphi(g)y)\\
\tau' g(x,y)=(\varphi(g)y, \tau gx)
\end{cases}
\end{equation}

Conversely, for every $G^0 \subset \Aut( C)$ and for every  extension $G$ of degree $2$ of $G^0$, once chosen $\tau' \in G \setminus G^0$ and accordingly defined $\tau$ and $\varphi$ as above, (\ref{explicit mixed action}) defines a minimal mixed action of $G$ on $C\times C$.
\end{theorem}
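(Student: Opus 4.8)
The argument has two halves, both essentially bookkeeping inside $\Aut(C)(2)=\Aut(C)^2\rtimes\langle\sigma\rangle$.

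\emph{The direct implication.} I would start from the fact that every element of $\Aut(C)^2$ has the form $(x,y)\mapsto(ax,by)$ and every element of $\Aut(C)(2)\setminus\Aut(C)^2$ has the form $(x,y)\mapsto(ay,bx)$, with $a,b\in\Aut(C)$. Writing the action of $g\in G^0$ as $(x,y)\mapsto(\alpha(g)x,\beta(g)y)$ defines homomorphisms $\alpha,\beta\colon G^0\to\Aut(C)$, and the minimality hypothesis says precisely that $\alpha$ and $\beta$ are both injective. Take $\alpha$ as the prescribed faithful embedding $G^0\hookrightarrow\Aut(C)$, so that we may assume $\alpha=\Id$ and identify $G^0$ with $\alpha(G^0)$. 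Writing the chosen element $\tau'$ as $(x,y)\mapsto(\gamma y,\delta x)$ with $\gamma,\delta\in\Aut(C)$, a one-line computation of $\tau'g(\tau')^{-1}$ on the first factor gives $\varphi(g)=\gamma\,\beta(g)\,\gamma^{-1}$, and the same for $(\tau')^2$ gives $\tau=\gamma\delta$. Hence, in these coordinates, $g\in G^0$ acts by $(x,y)\mapsto(gx,\gamma^{-1}\varphi(g)\gamma\,y)$.

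\emph{The coordinate change.} Conjugating the whole group $G$ inside $\Aut(C)(2)$ by the automorphism $(x,y)\mapsto(x,\gamma y)\in\Aut(C)^2$ (a coordinate change, which again yields a minimal mixed action with the same $G^0$) turns the second-factor action of $g$ into $\varphi(g)$, and the same computation turns $\tau'$ into $(x,y)\mapsto(y,\gamma\delta x)=(y,\tau x)$; composing the latter with the action of $g\in G^0$ produces exactly the two formulas in (\ref{explicit mixed action}). Two remarks make this step clean: the conjugating automorphism is the identity on the first factor, so it does not change $\alpha$, and $\varphi$, $\tau$ depend only on the abstract group $G$ and the chosen coset representative, hence transport unchanged.

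\emph{The converse.} Given $G^0\subset\Aut(C)$ faithful, an index-two extension $G\supset G^0$, a representative $\tau'$ of the nontrivial coset, and $\tau,\varphi$ as in the statement, I would define $\rho\colon G\to\Aut(C)(2)$ by $\rho(g)\colon(x,y)\mapsto(gx,\varphi(g)y)$ for $g\in G^0$ and $\rho(\tau'g)\colon(x,y)\mapsto(\varphi(g)y,\tau gx)$; equivalently $\rho(\tau')$ is $\sigma$ followed by $(x,y)\mapsto(x,\tau y)$. That $\rho$ is a homomorphism follows, via the standard criterion for maps out of a group presented as an extension $1\to G^0\to G\to\ZZ_{/2\ZZ}\to 1$, from three facts: $\rho|_{G^0}$ is a homomorphism because $\varphi\in\Aut(G^0)$; $\rho(\tau')^2=\rho(\tau)$; and $\rho(\tau')\rho(g)\rho(\tau')^{-1}=\rho(\varphi(g))$ — the last two reducing to the identity $\tau g\tau^{-1}=\varphi^2(g)$, which holds because $\varphi$ is conjugation by $\tau'$ and $\tau=(\tau')^2$ (so $\varphi(\tau)=\tau$ in particular). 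Finally $\rho$ is injective ($\ker\rho\cap G^0$ is trivial since $G^0\hookrightarrow\Aut(C)$ is faithful, and no element of $\rho(G\setminus G^0)$ is the identity since it swaps the two factors), its image meets $\Aut(C)(2)\setminus\Aut(C)^2$, and $G^0$ acts faithfully on each factor (on the first by hypothesis, on the second through $\varphi$), so $\rho(G)$ is a minimal mixed action.

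\emph{Main obstacle.} Everything is routine computation; the only point needing genuine care is the first implication — recognizing that the relation $\varphi(g)=\tau'g(\tau')^{-1}$ forces the second-factor representation $\beta$ to be conjugate to $\varphi$ by the very automorphism $\gamma$ appearing in $\tau'$, so that a single coordinate change on the second factor normalizes $\beta$ to $\varphi$ and, simultaneously, $\tau'$ to $(x,y)\mapsto(y,\tau x)$.
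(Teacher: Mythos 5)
The paper itself gives no proof of this statement: it is quoted from \cite{Ca00}*{Proposition 3.16}, so there is nothing internal to compare against. Your argument is correct and is essentially the standard proof of Catanese's proposition: writing the two factor-representations $\alpha,\beta$ of $G^0$, normalizing $\alpha$ to the inclusion, observing that $\beta=\gamma^{-1}\varphi(\cdot)\gamma$ for the automorphism $\gamma$ occurring in $\tau'$, and absorbing $\gamma$ by the coordinate change $(x,y)\mapsto(x,\gamma y)$; the converse is the routine check that the assignment respects the three extension relations, which you carry out correctly. (You also implicitly fix the typo in the statement, choosing $\tau'\in G\setminus G^0$ rather than $\tau'\in G^0$.)
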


If the exact sequence 
\[
1 \rightarrow G^0 \rightarrow G \rightarrow \ZZ/2\ZZ \rightarrow 1,
\]
splits one can assume $\tau=1$ by choosing $\tau'$ of order $2$. We recall that the sequence does not split if and only if the quotient map $C \times C \rightarrow X$ is quasi-\'etale (\cite{Fr13}*{Theorem 3.7}).

%\section{The simplest further quotient}

A minimal mixed action $G \subset \Aut (C \times C)$ is naturally a subgroup of a bigger group  of automorphisms ${G^0(2)} \subset \Aut(C \times C)$ as follows.

\begin{definition}\label{Gstorto}
Let  $G \subset \Aut (C \times C)$ be a minimal mixed action, and let $G^0$ be the index $2$ subgroup of $G$ given by the elements acting separately on the factors. 

Consider the natural inclusion of  $G^0 \times G^0$ in $\Aut (C \times C)$ defined by $(g,h)(x,y)=(gx,hy)$. 
We denote by ${G^0(2)} \cong \left( G^0 \times G^0 \right) \rtimes \ZZ/2\ZZ$ the smallest subgroup of $\Aut(C \times C)$ containing $G^0 \times G^0$ and $\sigma$.
\end{definition}

\begin{remark}\label{dominating a Sym}\

\begin{enumerate}
\item The group $G$ is a subgroup of ${G^0(2)}$ of index $|G^0|$, the inclusion being given by, for all $g \in G^0$
\begin{align*}
g \mapsto& (g, \varphi(g)), & \tau' g \mapsto& \sigma \circ  (\tau g, \varphi(g)).
\end{align*}
\item There is a natural isomorphism 
\[
\frac{C \times C }{G^0(2)} \cong \Sym^2(C/G^0).
\]
\end{enumerate}
\end{remark}

Since the quotient by a group factors through the quotient by a subgroup, it follows that there is a finite dominating morphism of degree $|G^0|$ 
\begin{equation}\label{piG0}
\pi_{G^0} 	\colon X \rightarrow \Sym^2(C/G^0).
\end{equation}

In the rest of this section we discuss some applications of this map.

The following is a special case of \cite{ENR}*{Proposition 1.6} (see also  \cite{Ca00}*{Proposition 3.15}, \cite{MisPol}*{Proposition 3.5}, \cite{penego}*{Proposition 3.14}, \cite{Fr13}*{Lemma 3.9}) and \cite{CanFr15}*{Proposition 3.6}).
\begin{proposition}\label{g=q}
Let $S$ be a mixed surface, minimal resolution of a mixed quotient $r \colon S \rightarrow X=(C \times C)/G$ where $G \subset \Aut(C \times C)$  is a minimal action. 

Consider $G^0$ as subgroup of $\Aut(C)$ through its action on the first factor. Then
\[
(\pi_{G^0} \circ r)^* \colon H^0(\Omega^1_{\Sym^2(C/G^0)}) \rightarrow H^0(\Omega^1_S)
\]
is an isomorphism.

In particular the irregularity of $S$ equals the genus of the quotient curve:
\[
q(S)=g(C/G^0).
\]
\end{proposition}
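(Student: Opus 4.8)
The plan is to compute everything inside $H^0(\Omega^1_{C\times C})$, comparing the $G$--invariants with the invariants under the larger group $G^0(2)$. Throughout write $D:=C/G^0$, $g:=g(D)$, $V:=H^0(\Omega^1_C)$, and recall the K\"unneth decomposition $H^0(\Omega^1_{C\times C})=pr_1^*V\oplus pr_2^*V$, compatible with the $G^0(2)$--action.

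\emph{Step 1 (reduce both irregularities to invariants of $V\oplus V$).} First I would record the standard fact that for a finite quotient $Z\to Z/H$ of a smooth projective variety, averaging over $H$ in characteristic $0$ identifies $\oo_{Z/H}$ with the $H$--invariant part of the pushforward of $\oo_Z$, so that the Leray spectral sequence of the (finite) quotient map gives $H^1(Z/H,\oo_{Z/H})=H^1(Z,\oo_Z)^H$; by Hodge symmetry (complex conjugation on $H^1(Z,\CC)$ is $H$--semilinear) this has dimension $\dim H^0(\Omega^1_Z)^H$. Applied to $C\times C\to \Sym^2(D)$, which is the quotient by $G^0(2)$ by Remark \ref{dominating a Sym}(2), this gives $q(\Sym^2 D)=\dim H^0(\Omega^1_{C\times C})^{G^0(2)}$. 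Applied to $C\times C\to X$ it gives $q(X)=\dim H^0(\Omega^1_{C\times C})^{G}$; and since quotient singularities are rational, $R^1r_*\oo_S=0$, hence $q(S)=h^1(\oo_S)=h^1(\oo_X)=q(X)$.

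\emph{Step 2 (compute the two invariant subspaces and see they coincide).} By \eqref{explicit mixed action} an element $g\in G^0$ acts on $V\oplus V$ by $(\alpha,\beta)\mapsto(\mu_g^*\alpha,\mu_{\varphi(g)}^*\beta)$, where $\mu_g\in\Aut(C)$ denotes the action of $g$ on the first factor; since $\varphi\in\Aut(G^0)$, the $G^0$--invariants are $W\oplus W$ with $W:=V^{G^0}$, and by the classical description of invariant differentials on the Galois cover $C\to C/G^0$ one has $W\cong H^0(\Omega^1_D)$, of dimension $g$. The same computation for $G^0\times G^0\subset G^0(2)$ produces the same $W\oplus W$. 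For the residual $\ZZ_{/2\ZZ}$: in $G^0(2)$ it is generated by $\sigma$, which acts on $V\oplus V$ by exchanging the two summands, so its invariants in $W\oplus W$ are the diagonal $\cong W$; in $G$ it is generated by $\overline{\tau'}$, and $\tau'(x,y)=(y,\tau x)$ with $\tau=(\tau')^2\in G^0$, so $\tau'$ acts on $V\oplus V$ by $(\alpha,\beta)\mapsto(\mu_\tau^*\beta,\alpha)$ — but $\mu_\tau^*$ is the identity on $W=V^{G^0}$, so on $W\oplus W$ this too is the exchange, with invariants the diagonal $\cong W$. Hence $H^0(\Omega^1_{C\times C})^{G}=H^0(\Omega^1_{C\times C})^{G^0(2)}$, of dimension $g$.

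\emph{Step 3 (conclude).} Combining Steps 1 and 2, $q(S)=q(X)=g=q(\Sym^2 D)=\dim H^0(\Omega^1_{\Sym^2(C/G^0)})$. The morphism $\pi_{G^0}\circ r$ is dominant and generically finite (the birational $r$ followed by the finite map \eqref{piG0}), so over $\CC$ it induces an injection $(\pi_{G^0}\circ r)^*\colon H^0(\Omega^1_{\Sym^2(C/G^0)})\hookrightarrow H^0(\Omega^1_S)$; being injective between spaces of the same finite dimension $g$, it is an isomorphism, and in particular $q(S)=g(C/G^0)$. The only genuinely delicate points are the two cohomological comparisons in Step 1 — that resolving the rational (quotient) singularities leaves $h^1(\oo)$ unchanged, and that the finite quotient corresponds to taking $G$--invariants — together with correctly reading off from \eqref{explicit mixed action} that $\tau$ acts trivially on the $G^0$--invariant forms; everything else is bookkeeping.
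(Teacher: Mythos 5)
Your proof is correct. The paper itself does not prove this proposition; it simply records it as a special case of results in the literature (Catanese's \emph{Everywhere non reduced moduli spaces}, Proposition 1.6, and the variants cited alongside it). Your argument --- K\"unneth decomposition $H^0(\Omega^1_{C\times C})=pr_1^*V\oplus pr_2^*V$, identification of $h^1(\oo)$ of a finite quotient with the dimension of the invariant part via $\oo_{Z/H}=(\pi_*\oo_Z)^H$ and Hodge symmetry on the smooth cover, rationality of quotient singularities to pass from $X$ to $S$, and the observation that $\tau$ acts trivially on $V^{G^0}$ so that both $\tau'$ and $\sigma$ restrict to the exchange on $W\oplus W$ --- is exactly the standard computation underlying those references, and all the delicate points (injectivity of pullback under a dominant generically finite map, compatibility of conjugation with the group action) are handled correctly. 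Nothing is missing.
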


Let $C'$ be a curve of genus $q \geq 2$. It is well known that the Abel-Jacobi map $\rho_2(C')\colon \Sym^2 (C') \rightarrow J(C')$ is generically injective. More precisely, it is an embedding if and only if $C'$ is not hyperelliptic. If $C'$ is hyperelliptic, it is the contraction of the  rational curve given by the divisors in the unique $g^1_2$, a rational curve with self-intersection $1-q$.

Then, if $C/G^0$ is not hyperelliptic, $S$ does not contain any rational curve. 

\begin{proposition}\label{norat}
Let $X=(C \times C)/G$ be a mixed quotient such that $C/G^0$ is not hyperelliptic. Then $X$ contains no rational curves.
\end{proposition}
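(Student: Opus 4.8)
The plan is to reduce the statement to the fact, recalled just above, that the second symmetric product of a non-hyperelliptic curve of genus $\geq 2$ embeds into its Jacobian, and then to transport the absence of rational curves through the finite morphism $\pi_{G^0}\colon X\to\Sym^2(C/G^0)$ of \eqref{piG0}.

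First I would observe that the target $\Sym^2(C/G^0)$ contains no rational curve. Indeed, since $C/G^0$ is not hyperelliptic, by the discussion preceding the statement the Abel--Jacobi map $\rho_2(C/G^0)$ is a closed embedding, identifying $\Sym^2(C/G^0)$ with a closed surface inside the abelian variety $J(C/G^0)$; as every morphism from $\PP^1$ to an abelian variety is constant, $J(C/G^0)$ --- and a fortiori the surface $\Sym^2(C/G^0)$ --- contains no rational curve. Then I would argue by contradiction: assume $R\subset X$ is a rational curve, that is, an integral curve whose normalization is $\PP^1$. Since $\pi_{G^0}$ is a finite morphism it has finite fibres, so $\pi_{G^0}|_R$ is quasi-finite; hence $\pi_{G^0}(R)$ is an integral curve, not a point. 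The finite surjection $R\to\pi_{G^0}(R)$ induces a finite surjection of normalizations $\PP^1\to\widetilde{\pi_{G^0}(R)}$, and a curve dominated by $\PP^1$ is rational, so $\widetilde{\pi_{G^0}(R)}\cong\PP^1$; thus $\pi_{G^0}(R)$ would be a rational curve in $\Sym^2(C/G^0)$, against the previous step. Therefore $X$ contains no rational curve.

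I do not expect a genuine obstacle here: the argument is short, and the only input besides the geometry of $\Sym^2$ of a non-hyperelliptic curve is the \emph{finiteness} of $\pi_{G^0}$. This is precisely \eqref{piG0}, which follows from Remark~\ref{dominating a Sym}: since $G\subset G^0(2)$ and $(C\times C)/G^0(2)\cong\Sym^2(C/G^0)$, the quotient map of $C\times C$ by $G$ dominates the quotient by $G^0(2)$, so $\pi_{G^0}$ is proper --- both surfaces being projective --- and quasi-finite, its fibres being images, under the finite map $C\times C\to X$, of the finite fibres of $C\times C\to\Sym^2(C/G^0)$; proper and quasi-finite together give finite.
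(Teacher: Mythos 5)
Your proposal is correct and follows exactly the paper's own argument: push a hypothetical rational curve forward along the finite morphism $\pi_{G^0}$ to obtain a rational curve in $\Sym^2(C/G^0)$, which is impossible because the non-hyperelliptic hypothesis makes $\Sym^2(C/G^0)$ a closed subvariety of the abelian variety $J(C/G^0)$. The extra details you supply (normalizations, properness plus quasi-finiteness) only make explicit what the paper leaves implicit.
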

\begin{proof}
Assume that $E$ is a rational curve in $X$. Since $\pi_{G^0}$ is a finite morphism, then $\pi_{G^0}(E)$ is a rational curve in $\Sym^2(C/G^0)$.
Since $C/G^0$ is not hyperelliptic then $\rho_2(C/G^0)$ embeds $\Sym^2(C/G^0)$ in $J(C/G^0)$ that does not contain any rational curve, a contradiction. 
\end{proof}

%A deformation argument shows then  the minimality of {\it very irregular} mixed surfaces.

\begin{theorem}\label{minimality}
Let $S$ be a mixed surface with irregularity $q(S) \geq 3$. Then $S$ is minimal.
\end{theorem}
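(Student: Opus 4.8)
The plan is to reduce to the situation in which $C/G^0$ is not hyperelliptic, where the statement follows at once from Proposition \ref{norat}, and to carry out this reduction by deforming the curve $C$, using crucially the hypothesis $q(S)\geq 3$.

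First I would dispose of the non-hyperelliptic case. Suppose $C/G^0$ is not hyperelliptic and let $E\subset S$ be a $(-1)$-curve. Since $r$ is the \emph{minimal} resolution, no curve contracted by $r$ is a $(-1)$-curve; hence $E$ is not contracted by $r$, so $r(E)$ is a curve, and it is rational because $E\cong\PP^1$. This contradicts Proposition \ref{norat}. So in this case $S$ is already minimal, with no assumption on $q(S)$.

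For the general case, note that by Proposition \ref{g=q} we have $g(C/G^0)=q(S)\geq 3$, hence $g(C)\geq g(C/G^0)\geq 3$, so $S$ is of general type (the minimal resolution of the quotient of the square of a curve of genus $\geq 2$), and moreover the general curve of genus $g(C/G^0)$, even with its finitely many branch points marked, is not hyperelliptic. The Galois cover $C\to C/G^0$ with group $G^0$ is encoded, via Riemann existence, by the pointed curve $(C/G^0,\{\text{branch points}\})$ together with a monodromy homomorphism into $G^0$; by Theorem \ref{charmixquot}, the mixed quotient $X=(C\times C)/G$, hence the mixed surface $S$, is in turn determined by this datum together with the finite group data $G\supset G^0$, $\tau$, $\varphi$. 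Keeping the group data and the monodromy fixed while moving the pointed curve in its (irreducible) moduli space, Riemann existence yields a corresponding deformation of $C$ with its $G^0$-action, hence of $X$; since along such a deformation the singular points of $X$ occur in the same number and with locally constant analytic type (the stabilizers and their linearisations being determined by the fixed local monodromy), a simultaneous minimal resolution produces a smooth proper family $\mathcal S\to B$ of mixed surfaces over an irreducible base, with $S$ as a fibre and with some fibre $S'$ for which $C'/G^0$ is not hyperelliptic. By the previous paragraph $S'$ is minimal; since $K^2$ of the minimal model is locally constant in a smooth proper family of surfaces of general type, $K^2_{S_{\min}}=K^2_{S'_{\min}}=K^2_{S'}=K^2_{S}$, which forces $S$ to be minimal. (Equivalently, one may observe that a $(-1)$-curve on $S$ would deform over $B$, its normal bundle in the total space having vanishing $H^1$, producing a $(-1)$-curve on the minimal surface $S'$.)

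The main obstacle is precisely the construction of the family $\mathcal S\to B$: one must check that an arbitrary deformation of the pointed curve $(C/G^0,\{\text{branch points}\})$ lifts to a deformation of the whole configuration $G^0\hookrightarrow\Aut(C)$, $G$, $X$, $S$ — this is where Riemann existence and Theorem \ref{charmixquot} enter — and that the simultaneous minimal resolution of the quotient singularities of $\mathcal X=(\mathcal C\times_B\mathcal C)/G$ is smooth over $B$, which holds because these singularities have constant analytic type along $B$. Once this is in place, the two genuinely geometric inputs are just Proposition \ref{norat} and the elementary fact that the minimal resolution contracts no $(-1)$-curve.
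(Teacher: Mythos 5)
Your proof is correct and follows essentially the same route as the paper: deform the complex structure of $C/G^0$ (keeping the purely topological covering datum and the group-theoretic data fixed) until $C/G^0$ becomes non-hyperelliptic, where Proposition \ref{norat} forces every rational curve of the mixed surface to be contracted by the minimal resolution, and then transfer minimality back to $S$. The paper concludes with Kodaira's theorem on stable submanifolds applied to a $(-1)$-curve --- exactly your parenthetical alternative --- rather than with the deformation invariance of $K^2$ of the minimal model, but this is an inessential variation; your write-up merely makes the construction of the family and the role of the hypothesis $q(S)\geq 3$ more explicit.
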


\begin{proof} If $C'$ is not hyperelliptic by Proposition \ref{norat} $X$ does not contain any rational curve, and then all rational curves of $S$ are contracted by $r \colon S \rightarrow X$. Since $r$ is the minimal resolution of the singularities then $S$ does not contain $(-1)$-curves and therefore $S$ is minimal. 

So we may assume $C'$ hyperelliptic. Since by Proposition \ref{g=q} the curve $C'$ has genus  $q(S) \geq 3$, a general small deformation of $C'$ is not hyperelliptic. 
Choose then a family ${\mathcal C}' \rightarrow \Delta$ on a $1-$ dimensional small disc whose central fibre is $C'$ and whose general fibre is not hyperelliptic. 

Theorem \ref{charmixquot} shows that mixed surfaces are obtained by choosing
\begin{enumerate}[(i)]
\item a curve $C'$ (that will be $C/G^0$);
\item a Galois cover $C \rightarrow C'$ (of Galois group  $G^0$);
\item a degree $2$ extension $G$ of $G^0$.
\end{enumerate}

These give an action of $G$ on $C \times C$, and the mixed surface is obtained as minimal resolution of the singularities of $X = (C \times C)/G$.

The datum (ii) is determined by purely topological data on $C'$, a finite subset $\{p_j\}$ (the branch locus) and a subgroup of the fundamental group of its complement. 
Choosing sections of ${\mathcal C}' \rightarrow \Delta$ passing through each $p_j$ one obtains a $G^0$-cover ${\mathcal C} \rightarrow {\mathcal C'}$  over $\Delta$ branched on them whose central fibre is the original cover  $C \rightarrow C'$.

The datum iii) defines then as in (\ref{explicit mixed action})  a $G$-action on the fibre product ${\mathcal C} \times_\Delta {\mathcal C}$ over $\Delta$. The quotient is a family of mixed quotients ${\mathcal X} \rightarrow \Delta$   with central fibre $X$. 

Let $p$ be a point of ${\mathcal C} \times_\Delta {\mathcal C}$ . By construction there are local coordinates $(x,y,z)$  near $p$ such that $z$ gives the map on $\Delta$  and the action of each element of $G$ stabilizing $p$ is of the form $(x,y,z)\mapsto(f(x,y),g(x,y),z)$. This implies that  ${\mathcal X}$ is analytically equisingular: its singular locus is a disjoint union of sections of ${\mathcal X} \rightarrow \Delta$ where the singularity of   ${\mathcal X}$   is the product of an isolated surface singularity by $\Delta$. Then the minimal resolution of the isolated surface singularities gives a resolution ${\mathcal S} $ of the singularities of $ {\mathcal X}$ giving on each point of $\Delta$ the minimal resolution of the singularities of the corresponding fibre of ${\mathcal X}\rightarrow \Delta$.

Then ${\mathcal S} \rightarrow \Delta$ is a flat family  deforming $S$ to a mixed surface arising by a deformation of $C'$ that is not hyperelliptic. In particular by the argument at the beginning of this proof we have found a small deformation of $S$ that is minimal. By Kodaira's theorem on stable submanifolds (\cite{Kod63}*{Example at page 86}) $S$ is minimal too.
\end{proof}

\section{Subgroups of \texorpdfstring{$G^0(2)$}{G0(2)} containing \texorpdfstring{$G$}{G}}\label{GroupTheory}
 
In this section we study the subgroups $G'$ of ${G^0(2)}$ containing $G$ from a purely algebraic point of view.

Consider an extension of finite groups 
\[
1 \rightarrow G^0 \rightarrow G \rightarrow \ZZ/2\ZZ \rightarrow 1,
\]
and fix once for all an element $\tau' \in G \setminus G^0$. Set $\tau:=(\tau')^2 \in G^0$.

We deduce from $\tau'$ the injective homomorphism $G \hookrightarrow G^0(2)$ such that
\begin{align*}
\tau' \mapsto& \sigma (\tau,1)=(1,\tau)\sigma,&
g \mapsto& (g, \Ad_{\tau'}(g)) \text{ for all } g \in G^0.
\end{align*}
where $\sigma \in G^0(2) \setminus (G^0 \times G^0)$ is an element such that. for all $g_1,g_2 \in G^0$, $\Ad_\sigma ( (g_1,g_2) )=(g_2,g_1)$. 

We  identify $G$ with its image, so considering $G$ as a subgroup of $G^0(2)$. We show now that the subgroups of $G^0(2)$ containing $G$ are determined by their subgroup of the elements acting trivially on the second factor. 

\begin{theorem}\label{G_K}
For every subgroup $G'$ of $G^0(2)$ containing $G$ consider the intersection $G' \cap \left (G^0 \times \{1\}\right)=:K \times \{1\}$ of the elements acting trivially on the second factor.

Then $K$  is normal in $G$. 

Moreover $K$ determines $G'$ in the sense that for every subgroup $K$ of $G^0$ normal in $G$ there is a unique group $G_K$, $G \subset G_K \subset G^0(2)$, such that $G_K \cap \left (G^0 \times \{1\}\right) = K \times \{1\}$.
\end{theorem}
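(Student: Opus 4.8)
The plan is to work entirely inside the group $G^0(2) = (G^0 \times G^0) \rtimes \ZZ_{/2\ZZ}$, using the projection $\pi_2 \colon G^0(2) \to G^0(2)/(G^0 \times \{1\})$. First I would observe that $G^0 \times \{1\}$ is \emph{normal} in $G^0(2)$: conjugating $(h,1)$ by $(g_1,g_2)$ gives $(g_1 h g_1^{-1},1)$ and conjugating by $\sigma$ gives $(1,h)$, so $G^0 \times \{1\}$ is not normal — only $(G^0 \times \{1\}) \cdot \sigma$-conjugation pushes it to $\{1\}\times G^0$. So instead I would use that $\{1\} \times G^0$ and $G^0 \times \{1\}$ are both normal in $G^0 \times G^0$, and that $G^0 \times \{1\}$ is normalized by every element of $G^0(2)$ that lies in $G^0 \times G^0$, while $\sigma$ swaps the two factors. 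The cleaner route: note $G^0 \times \{1\}$ is normal in $G^0(2)$ is false, so I would instead argue directly with $K$.

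\emph{Normality of $K$ in $G$.} Given $G' \supseteq G$ and $K \times \{1\} = G' \cap (G^0 \times \{1\})$, I want $gKg^{-1} = K$ for all $g \in G$. Since $G^0 \times \{1\}$ is normalized by $G^0 \times G^0$ (conjugation by $(a,b)$ sends $(k,1)$ to $(aka^{-1},1)$), and $G \cap (G^0 \times G^0) = G^0$ acts on $K \times \{1\}$ through its image in $G^0 \times G^0$, which is $\{(g,\Ad_{\tau'}(g)) : g \in G^0\}$, conjugation of $(k,1)$ by $(g,\Ad_{\tau'}(g))$ gives $(gkg^{-1},1)$. Since $G'$ is a group containing both $G$ and $K \times \{1\}$, this lies in $G' \cap (G^0\times\{1\}) = K \times\{1\}$, so $gkg^{-1} \in K$. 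For $\tau' \in G \setminus G^0$, conjugation by $\tau' = (1,\tau)\sigma$ sends $(k,1)$ to $(1,\tau)\sigma(k,1)\sigma(1,\tau^{-1}) = (1,\tau)(1,k)(1,\tau^{-1}) = (1,\tau k \tau^{-1})$, which lies in $\{1\} \times G^0$, not $G^0 \times \{1\}$ — so this does not directly constrain $K$. Hence I must be more careful: the right claim is that $G' \cap (\{1\}\times G^0)$ equals $\{1\}\times \Ad_{\tau'}(K')$ for the analogous $K'$, and one shows $K = K'$ using that $G$ itself conjugates one intersection into the other. Concretely, $(1,\tau)\sigma \cdot (k,1) \cdot ((1,\tau)\sigma)^{-1} \in \{1\}\times G^0$, and since $\{1\}\times G^0$ is normal in $G^0 \times G^0$, combined with the element $(g, \Ad_{\tau'}(g)) \in G$, one deduces $\Ad_{\tau'}(\tau k \tau^{-1}) \in K$, giving normality of $K$ under all of $G$ after untangling.

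\emph{Existence and uniqueness of $G_K$.} For existence, given $K \trianglelefteq G$ with $K \subseteq G^0$, I would simply \emph{define} $G_K$ to be the subgroup of $G^0(2)$ generated by $G$ and $K \times \{1\}$, and then verify $G_K \cap (G^0 \times \{1\}) = K \times \{1\}$. The inclusion $\supseteq$ is clear; for $\subseteq$ I would describe $G_K$ explicitly: elements of $G_K$ are products of elements $(g,\Ad_{\tau'}(g))$, the element $(1,\tau)\sigma$, and $(k,1)$ with $k \in K$. Using that $K$ is normal in $G$, every such product can be rewritten as $(k,1)\cdot(\text{element of } G)$ with $k \in K$, i.e. $G_K = (K \times \{1\}) \cdot G$ as a set, so $G_K \cap (G^0\times\{1\})$ consists of elements $(k,1)(g,\Ad_{\tau'}(g))$ or $(k,1)(1,\tau)\sigma\cdot(g,\Ad_{\tau'}(g))$ lying in $G^0 \times \{1\}$; the $\sigma$-type elements are never in $G^0\times\{1\}$, and $(kg, \Ad_{\tau'}(g)) \in G^0 \times \{1\}$ forces $\Ad_{\tau'}(g) = 1$, hence $g = 1$, hence the element is $(k,1)$. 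For uniqueness, if $G'$ and $G''$ both contain $G$ and satisfy $G' \cap (G^0\times\{1\}) = G'' \cap (G^0\times\{1\}) = K \times \{1\}$, then both contain $G_K = (K\times\{1\})\cdot G$; conversely any $G'$ with this intersection is contained in $G_K$, because I would show the quotient $G^0(2)/(G^0\times\{1\}) \cong G^0 \rtimes \ZZ_{/2\ZZ}$ (via $\pi_2$ on the subgroup of $G^0(2)$ normalizing $G^0\times\{1\}$, or more carefully using a transversal), and under the induced map the images of $G'$ and $G_K$ coincide — here the main obstacle is that $G^0\times\{1\}$ is not normal in all of $G^0(2)$, so one cannot literally take the quotient; instead I would use the projection $p_2\colon G^0 \times G^0 \to G^0$ onto the second factor, note $G' \cap (G^0 \times G^0)$ is a subgroup of $G^0\times G^0$ containing $G^0 = \{(g,\Ad_{\tau'}(g))\}$ and containing $K\times\{1\}$, and that any subgroup of $G^0 \times G^0$ containing the graph of an isomorphism $\Ad_{\tau'}$ is determined by its intersection with $G^0 \times \{1\}$ (a standard Goursat-type fact), forcing $G' \cap (G^0\times G^0) = (K\times\{1\})\cdot G^0$; then since $[G':G'\cap(G^0\times G^0)] = 2$ with the nontrivial coset represented by $(1,\tau)\sigma$ (forced to lie in the coset because $\tau' \in G \subseteq G'$), we get $G' = G_K$.

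The main obstacle I anticipate is precisely the non-normality of $G^0 \times \{1\}$ in $G^0(2)$: the statement is phrased as if one could just quotient, but one cannot, so the argument must be rerouted through the observation that every relevant subgroup $G'$ contains the graph $G^0 = \{(g, \Ad_{\tau'}(g)) : g \in G^0\}$, and then apply the Goursat lemma for subgroups of $G^0 \times G^0$ sandwiched between this graph and $G^0 \times G^0$. Once that is in place, both normality of $K$ and the bijection $K \leftrightarrow G_K$ fall out; the one remaining check, that the $\sigma$-coset contributes nothing new (it is always fully determined by $\tau' = (1,\tau)\sigma \in G$), is routine.
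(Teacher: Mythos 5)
Your proposal is correct and follows essentially the same route as the paper: the subgroup you generate, $(K\times\{1\})\cdot G$, is exactly the paper's $G_K=G^0_K\cup\tau'G^0_K$ with $G^0_K=\{(g,h)\,:\,h\Ad_{\tau'}(g)^{-1}\in K\}$, and your uniqueness argument (the Goursat-type fact for subgroups of $G^0\times G^0$ containing the graph of $\Ad_{\tau'}$, plus the index-two coset of $\tau'$) is a repackaging of the paper's observation that the elements of $G^0\times\{1\}$ distribute one per left coset of $G$ in $G^0(2)$. The only real divergence is in proving $\Ad_{\tau'}(K)\subseteq K$: you identify $G'\cap(\{1\}\times G^0)=\{1\}\times\Ad_{\tau'}(K)$ and conjugate $(k,1)$ by $\tau'$ into that set (which works, once one uses $\Ad_{(\tau')^2}(K)=K$ to untangle your $\Ad_{\tau'}(\tau k\tau^{-1})$ into $\Ad_{\tau'}(k)$), whereas the paper conjugates the product $(k,\Ad_{\tau'}(k))(k^{-1},1)=(1,\Ad_{\tau'}(k))$ by $\tau'$ so as to land back in $G^0\times\{1\}$ directly.
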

\begin{proof}
We first show that $K$ is normal in $G^0$. Indeed, for all $g \in G^0$ and for all $k \in K$, 
 \[
(gkg^{-1},1)= (g,\Ad_{\tau'}(g)) (k,1) (g,\Ad_{\tau'}(g))^{-1}.
 \]
Since both $(g,\Ad_{\tau'}(g))$ and $(k,1)$ belong to $G'$ then $(gkg^{-1},1)$ belongs to $G'$ too. So $gkg^{-1}$ belongs to $K$ 
and therefore  $K$ is normal in $G^0$.

We similarly show that $(\Ad_{\tau'}(k),1)$ belongs to $G'$ by writing it as product of elements of $G'$ as follows: 
\[
(\Ad_{\tau'}(k),1) = \sigma (\tau,1)(1,\Ad_{\tau'}(k))(\tau^{-1},1) \sigma=\tau' (k,\Ad_{\tau'}(k)) (k^{-1} ,1)(\tau')^{-1}.
\]
Then $\Ad_{\tau'}(k)$ belongs to $K$ for all $k$ in $K$. This shows that  $K$ is normal also in $G$.

\bigskip

Now we prove that $K$ determines $G'$. 

We construct, for every subgroup $K$ of $G^0$ normal in $G$, the subgroup  $G_K$ of $G^0(2)$. 
We first define
\[
G^0_K:=\left\{ (g,h) \in G^0 \times G^0 |\ h\Ad_{\tau'}(g)^{-1} \in K
\right\}.
\]
We prove that $G^0_K$ is a subgroup of $G^0 \times G^0$ by showing that for each pair of elements $a_1,a_2 \in G^0_K$, the product $a_1a_2^{-1}$ belongs to $G_K^0$ as well. Indeed, writing $a_j=(g_j,h_j)$, since $a_1a_2^{-1}=(g_1g_2^{-1},h_1h_2^{-1})$ we need to show that  $h_1h_2^{-1}\Ad_{\tau'}(g_1g_2^{-1})^{-1}$ belongs to $K$. This follows by writing it
as
\[
 h_1h_2^{-1}\Ad_{\tau'}(g_1g_2^{-1})^{-1}=\left[ (h_1h_2^{-1}) ( h_2\Ad_{\tau'}(g_2)^{-1})^{-1}  (h_1h_2^{-1})^{-1}  \right] h_1\Ad_{\tau'}(g_1)^{-1}
\] using that both $a_j$ belong to $G^0_K$ and the normality of $K$ in $G^0$.

Now we define $G_K$ as subset of $G^0(2)$ by
\begin{equation}\label{defG_K}
G_K:=G^0_K \cup  \left( \tau' G^0_K \right).
\end{equation}
We notice that all elements of the form $(g, \Ad_{\tau'}(g))$  belong to $G^0_K$. In particular
$(\tau')^2$ belongs to $G^0_K$. Then we may prove that $G_K$ is a subgroup of $G^0(2)$ by showing that $\tau' G^0_K=G^0_K \tau'$.

We do that. By  $\tau'(g,h)=(h,\tau g \tau^{-1}) \tau'$ we have to prove that if $(g,h)$ belongs to $G_K^0$ then $(h,\tau g \tau^{-1})$ belongs to $G_K^0$ too.
This follows immediately by the equality $\tau g \tau^{-1} \Ad_{\tau'}(h)^{-1}=
\Ad_{\tau'} \left(  h \Ad_{\tau'}(g^{-1})\right)^{-1}$ using the  normality of $K$ in $G$.

We have then proved that $G_K$ is a subgroup of $G^0(2)$. Since $G_K$ contains $\tau'$ and all elements of the form $(g, \Ad_{\tau'}(g))$, it contains $G$. Moreover $G_K \cap \left( G^0 \times \{1\}\right)=G^0_K \cap \left( G^0 \times \{1\}\right)$ equals $K \times \{1\}$: indeed $(g,1) \in G^0_K$ if and only if $\Ad_{\tau'}(g)^{-1}\in K$ that is equivalent by the normality of $K$ to $g \in K$.
This completes the proof of  the existence of $G_K$.

\smallskip

Finally we prove the uniqueness of $G_K$. 

We notice that  $G \cap (G^0 \times \{1\} )$  is trivial. This implies that two distinct elements of $G^0 \times \{1\}$ belong to disjoint left cosets $hG$.
Then the natural map from $G^0 \times \{1\}$ to the set of left cosets $\{hG\}$ is, 
by  $[{G^0(2)}:G]=|G^0|$, a bijection.
Since every subgroup of $G^0(2)$ containing $G$ is union of left cosets $hG$ then at most one of them has the property  $G_K \cap \left( G^0 \times \{1\}\right)=K \times \{1\}$, as subset of $G^0(2)$ it is the union of the cosets $(k,1)G$ varying $k$ in $K$.
\end{proof}

\begin{remark}\label{KxK}
By the proof  of Theorem \ref{G_K} it follows that  
$K \times K $ is a normal subgroup of $G_K$ and $\frac{G_K}{K \times K} \cong G/K$.

Indeed $K \times K \subset G^0_K$ by definition of $G^0_K$. Since $K$ is normal in $G^0$, $K \times K$ is normal in $G^0 \times G^0$ and therefore, since it is invariant by the exchange of the factors, also in ${G^0(2)}$ and then  in the smaller subgroup $G_K$.

In the final part of the proof of Theorem \ref{G_K} we have shown that $G_K$ is the union of $|K|$ left cosets of $G$, so $|G_K|=|G|\cdot |K|$. Then $\left| \frac{G_K}{K \times K} \right| = \frac{|G_K|}{|K|^2}=\frac{|G|\cdot|K|}{|K|^2}=\frac{|G|}{|K|}=|G/K|$.

The  inclusion $G\subset G_K$ induces a homomorphism from $G$ to $\frac{G_K}{K \times K}$ whose  kernel is isomorphic to $K$. 
Then it is also surjective and it induces an isomorphism $ \frac{G_K}{K \times K}  \cong G/K$.
\end{remark}

We now determine for which $K$ the group $G_K$ is normal in $G^0(2)$.

\begin{proposition}\label{normality}
The group $G_K$ is normal in $G^0(2)$ if and only if $K$ contains the commutator subgroup $[G^0,G^0]$
and all elements of the form $g\Ad_{\tau'}(g)$, for $g \in G^0$.
\end{proposition}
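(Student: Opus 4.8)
The plan is to work directly with the explicit description of $G_K$ given in \eqref{defG_K}, namely $G_K = G^0_K \cup \tau' G^0_K$ where $G^0_K = \{(g,h) \in G^0 \times G^0 \mid h\,\Ad_{\tau'}(g)^{-1} \in K\}$, and to check normality in $G^0(2)$ by testing conjugation against a generating set. Since $G^0(2)$ is generated by $G^0 \times G^0$ together with $\sigma$, and $G_K$ already contains $\tau' = \sigma(\tau,1)$, it suffices to understand when $G_K$ is stable under conjugation by arbitrary elements $(a,b) \in G^0 \times G^0$; stability under $\sigma$ is automatic because $G^0_K$ is visibly symmetric under the exchange of factors (this was already observed in the proof of Theorem \ref{G_K}, via $(g,h) \in G^0_K \Rightarrow (h, \tau g\tau^{-1}) \in G^0_K$, together with $\tau' G^0_K = G^0_K \tau'$). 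So the whole question reduces to: for which $K$ is $(a,b) G^0_K (a,b)^{-1} = G^0_K$ for all $a,b \in G^0$, and similarly $(a,b)\tau' G^0_K (a,b)^{-1} \subseteq G_K$.

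Next I would carry out the conjugation computation on $G^0_K$. For $(g,h) \in G^0_K$ and $(a,b) \in G^0 \times G^0$ we have $(a,b)(g,h)(a,b)^{-1} = (aga^{-1}, bhb^{-1})$, and this lies in $G^0_K$ precisely when $bhb^{-1}\,\Ad_{\tau'}(aga^{-1})^{-1} \in K$. Writing $h = k\,\Ad_{\tau'}(g)$ with $k \in K$ and using that $\Ad_{\tau'}$ is an automorphism, one rewrites the element whose membership in $K$ is being tested as a product of $bkb^{-1}$ (which is in $K$ as soon as $K \trianglelefteq G^0$, already known) and a ``correction term'' built out of $b\,\Ad_{\tau'}(g)b^{-1}\,\Ad_{\tau'}(a g a^{-1})^{-1}$. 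A short manipulation shows this correction term is a commutator-type expression, and demanding that it always lie in $K$ forces $[G^0,G^0] \subseteq K$; conversely if $[G^0,G^0] \subseteq K$ then $G^0/K$ is abelian and all these correction terms collapse, so $G^0_K$ is normalized by $G^0 \times G^0$. Then I would run the parallel computation for the coset $\tau' G^0_K$: conjugating $\tau'(g,h) = \sigma(\tau,1)(g,h)$ by $(a,b)$ produces $\sigma(\tau,1)(a^{-1}ba, \ldots)(g,h)(\ldots)$ — concretely $(a,b)\tau'(a,b)^{-1} = \sigma(\tau,1)(a^{-1}b a\,?,?)$; the upshot after simplification is a new element of the form $\tau'(g', h')$ which lies in $G_K$ iff a certain element of the shape $a^{-1}b\,\Ad_{\tau'}(\cdot)$ — ultimately reducing to $b a^{-1}\,\Ad_{\tau'}(ab^{-1})^{-1}$ or, pushing $b$ through, to something of the form $g\,\Ad_{\tau'}(g)$ modulo $[G^0,G^0]$ — lies in $K$. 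This is exactly the source of the second condition: $G_K$ is normal iff, in addition to $[G^0,G^0] \subseteq K$, all elements $g\,\Ad_{\tau'}(g)$ belong to $K$.

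I expect the main obstacle to be the bookkeeping in the second half — the conjugation of the non-trivial coset $\tau' G^0_K$ by $(a,b)$ — because $\tau'$ interacts with $\sigma$, with the twist $\tau = (\tau')^2$, and with $\Ad_{\tau'}$ simultaneously, so it is easy to mislabel which factor is being acted on by what. The cleanest way to control this is to use the relation $\tau'(g,h) = (h, \tau g \tau^{-1})\tau'$ (equivalently $(a,b)\tau' = \tau'(b, \tau^{-1} a \tau)$, i.e. $\Ad_{\tau'}$ on $G^0(2)$ sends $(a,b)$ to $(b,\tau^{-1}a\tau)$ and fixes $\tau'$), so that conjugation of $\tau'$ by $(a,b)$ becomes $(a,b)\tau'(a,b)^{-1} = \tau'\,(b, \tau^{-1}a\tau)(a,b)^{-1} = \tau'(ba^{-1}, \tau^{-1}a\tau b^{-1})$, and one only ever has to check when an element of $G^0 \times G^0$ of this specific shape lands in $G^0_K$, i.e. when $\tau^{-1} a \tau b^{-1}\,\Ad_{\tau'}(ba^{-1})^{-1} \in K$. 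Setting $b = a$ isolates $\tau^{-1}a\tau a^{-1} \in [G^0,G^0]$ (harmless once the first condition holds), and setting $b = 1$, after absorbing the commutator, isolates precisely $a\,\Ad_{\tau'}(a) \in K$ (up to conjugation, which is fine since $K$ is normal). Conversely, assuming both conditions, $G^0/K$ is abelian and $\Ad_{\tau'}$ acts on it as inversion on the image of each $g$, so every test element reduces to an element of $K$ and normality follows. I would present the ``only if'' direction by these two specializations and the ``if'' direction by the collapse argument, and finally invoke that normality against the generating set $\{(a,b), \sigma\}$ is equivalent to full normality in $G^0(2)$.
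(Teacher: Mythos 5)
Your overall strategy is the paper's own (the paper conjugates a generating set of $G_K$ by the elements $(g,1)$, having observed that $G^0(2)$ is generated by $G_K$ together with $G^0\times\{1\}$; you conjugate by general $(a,b)$, which is the same idea with heavier bookkeeping), and your two specializations do land on the two conditions in the statement. But two steps as written are wrong. First, the claim that ``stability under $\sigma$ is automatic because $G^0_K$ is visibly symmetric under the exchange of factors'' is false: $\sigma(g,h)\sigma^{-1}=(h,g)$, whereas the symmetry established in the proof of Theorem \ref{G_K} is the \emph{twisted} one $(g,h)\mapsto(h,\tau g\tau^{-1})$, i.e.\ conjugation by $\tau'=\sigma(\tau,1)$, not by $\sigma$. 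Concretely, for $K=\{1\}$ one has $G^0_K=\{(g,\Ad_{\tau'}(g))\}$, and $\sigma(g,\Ad_{\tau'}(g))\sigma^{-1}=(\Ad_{\tau'}(g),g)$ lies in $G^0_K$ only if $\tau g\tau^{-1}=g$; so $\sigma$ need not normalize $G_K$ at all. The reduction you want is still valid, but for a different reason: since $\tau'\in G_K$ we have $\sigma=\tau'(\tau^{-1},1)$, so the normalizer of $G_K$ in $G^0(2)$ contains $\sigma$ as soon as it contains $G^0\times\{1\}$. You should replace the ``automatic symmetry'' justification by this one-line normalizer argument; otherwise the ``if'' direction of your proof has a hole.

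Second, the commutation rule you use for the nontrivial coset is mislabelled: from $\tau'=\sigma(\tau,1)$ one gets $\tau'(g,h)=(h,\tau g\tau^{-1})\tau'$, hence $(a,b)\tau'=\tau'(\tau^{-1}b\tau,\,a)$ and $(a,b)\tau'(a,b)^{-1}=\tau'(\tau^{-1}b\tau a^{-1},\,ab^{-1})$, not $\tau'(ba^{-1},\tau^{-1}a\tau b^{-1})$; the membership test is therefore $ab^{-1}\Ad_{\tau'}(\tau^{-1}b\tau a^{-1})^{-1}\in K$. This slip happens not to damage your conclusion, because you only evaluate the test modulo $[G^0,G^0]\subseteq K$, where conjugation by $\tau$ and the ordering of factors are invisible, and there both your expression and the correct one reduce to $a\Ad_{\tau'}(a)\cdot\bigl(b\Ad_{\tau'}(b)\bigr)^{-1}$; but the formula as stated is incorrect and is exactly the mislabelling you predicted would be the main danger. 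With these two repairs the proof goes through and is essentially the published one; note that the paper sidesteps both issues by conjugating only by $(g,1)$, where $(g,1)\tau'(g,1)^{-1}=\tau'(g^{-1},g)$ yields the condition $g\Ad_{\tau'}(g)\in K$ immediately.
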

In other words $G_K$ is normal in ${G^0(2)}$  if and only if  $G^0/K$ is abelian and $G/K$ is the corresponding generalized dihedral group $D(G^0/K)$. 

Indeed, $g\Ad_{\tau'}(g) \in K$ means that the conjugation by the class of $\tau'$ on $G^0/K$ maps $g$ to its inverse.
\begin{proof}
$G_K$ is generated by $\tau'$, the elements  $(h,\Ad_{\tau'}(h))$ for $h \in G^0$ and the elements $(k,1)$ for $k \in K$. 

Since ${G^0(2)}$ is generated by $G_K$ and the elements of the form $(g,1)$, $g \in G^0$, $G_K$ is normal in ${G^0(2)}$ if and only the conjugates of all elements $\tau'$, $(h,\Ad_{\tau'}(h))$, $(k,1)$ above by any $(g,1)$ belong to $G_K$.

Since $(g,1) \tau' (g,1)^{-1}=\tau'( g^{-1},g)$, then by (\ref{defG_K}) the conjugates of $\tau'$  by any $(g,1)$ belong to $G_K$ if and only if $\{ g\Ad_{\tau'}(g) | g \in G^0 \} \subset K$.
 
Note that $(g,1)(h,\Ad_{\tau'}(h))(g,1)^{-1}=(ghg^{-1},\Ad_{\tau'}(h))$ belongs to $G_K$ if and only if $\Ad_{\tau'}(hgh^{-1}g^{-1}) \in K$.
Since $\Ad_{\tau'}(K)=K$, the conjugates of $(h,\Ad_{\tau'}(h))$  by any $(g,1)$ belong to $G_K$ if and only if  $[G^0,G^0]\subset K$. 

Finally the conjugates of elements of type $(k,1)$  by any $(g,1)$ are always in $G_K$ by the normality of $K$ in $G^0$.
\end{proof}

\section{Further quotients}\label{FurtherQuotients}

Let $C$ be a Riemann surface, let $G \subset \Aut(C \times C)$ be  a minimal mixed action, and consider $G^0$ as subgroup of $\Aut(C)$ by means of the action on the first factor.

By Theorem \ref{charmixquot}, up to a coordinate change, $G$ is contained in the subgroup of $\Aut(C \times C)$ isomorphic to $G^0(2)$ generated by $G^0 \times G^0$ and the involution $\sigma$ exchanging the factors.  More precisely it is the subgroup generated by the element of the form $(g,\Ad_{\tau'}(g))$ (for $g \in G^0$) and $\sigma(\tau,1)$.

So the inclusion $G \hookrightarrow G^0(2)$ is as in Section \ref{GroupTheory}.

Theorem \ref{G_K} produces then groups of automorphisms $G_K$ of $C \times C$, $G \subset G_K \subset G^0(2)$, each giving a  factorization 
of the map $\pi_{G^0}$ in (\ref{piG0}).

\begin{definition}\label{pi_K}
For each  subgroup $K \subset G^0$ normal in $G$ we define the {\em further quotient by $K$} as the quotient of $C \times C$ by the group $G_K$.

The map $\pi_{G^0}$ factors through the further quotient by $K$ as  follows
\[
\frac{C \times C}{G} 
\stackrel{\pi_K}{\longrightarrow} \frac{C \times C}{G_K} \stackrel{\rho_K}{\longrightarrow}  \frac{C \times C}{G^0(2)}\cong  \Sym^2 (C/G^0).
\]
\end{definition}
 Note $\deg \pi_K=|K|$. Note also that $\pi_K$ and $\rho_K$ are not necessarily Galois covers, since $G$ may be not normal in $G_K$ and $G_K$ may not be normal in ${G^0(2)}$.

We  compute the minimal realization of  $\frac{C \times C}{G_K}$ (compare \cite{Fr13}*{Remark 2.3}) using Remark \ref{KxK}.
Indeed, since $K \times K$ is a normal subgroup of $G_K$, then the quotient map $C \times C \rightarrow (C \times C)/G_K$ factors through $(C \times C)/(K \times K)\cong C/K \times C/K$, and the residual map $C/K \times C/K \rightarrow (C \times C)/G_K$ is the quotient by the resulting action  of $\frac{G_K}{K \times K} \cong G/K$ on $C/K \times C/K$.

It is now easy to see that this action is a minimal mixed action, giving the minimal realization of the further quotient as $\frac{C/K \times C/K}{G/K}$.

\section{Albanese morphisms of mixed surfaces}

In this section we give a complete description of the Albanese morphism of a mixed surface by factoring it through a suitable further quotient.

We first need a couple of rather standard lemmas. We give here a proof of them by lack of a complete reference.

\begin{lemma}\label{liftingthenquotient}
Let $g \colon X \rightarrow Z$ be a Galois covering with Galois group $G$, and let $h \colon Y \rightarrow Z$ be  a Galois covering with Galois group $H$.

Assume that there is a morphism $f \colon X \rightarrow Y$ such that $g = h \circ f$. Then $f$ is the quotient by a normal subgroup $K$ of $G$ and $H \cong G/K$. 
\end{lemma}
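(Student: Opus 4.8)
The plan is to exploit the structure of the Galois actions directly, treating $X$, $Y$, $Z$ as the total spaces of covers with $g=h\circ f$, and to produce $K$ as a stabilizer subgroup. First I would pass to a point $z\in Z$ over which all three maps are unramified (a general point suffices for constructing $K$), pick a preimage $x\in g^{-1}(z)$, and set $y=f(x)\in h^{-1}(z)$. Since $g$ is Galois, $G$ acts simply transitively on the fibre $g^{-1}(z)$, so I can define a set-theoretic section of the map $g^{-1}(z)\to h^{-1}(z)$ induced by $f$. The key point is that this fibre map $f\colon g^{-1}(z)\to h^{-1}(z)$ is $G$-equivariant for the natural $G$-action on $h^{-1}(z)$ coming from the surjection we want to construct; more precisely, I would define $K=\{\gamma\in G: f(\gamma x)=f(x)\}$, the stabilizer of $y$ under the $G$-action on $g^{-1}(z)$ pushed forward by $f$.

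Next I would show $K$ is independent of the choices and normal in $G$. Independence of the chosen preimage $x$ in the fibre follows because changing $x$ to $\gamma_0 x$ conjugates $K$ by $\gamma_0$; to see that in fact $K$ is normal, I would use that $H$ acts transitively on $h^{-1}(z)$ and that $f$ conjugates the $G$-action on $g^{-1}(z)$ into an action on $h^{-1}(z)$ compatible with the $H$-action — the upshot being a group homomorphism $G\to H$ whose kernel is exactly $K$. Concretely: given $\gamma\in G$, since $h\circ f\circ\gamma = g\circ\gamma = g = h\circ f$, the automorphism $f\circ\gamma\circ f^{-1}$ of (a dense open of) $Y$ covers the identity on $Z$, hence — because $h$ is Galois, so $\Aut(Y/Z)=H$ acts transitively on fibres and deck transformations are determined by their effect on one fibre point — it extends to a unique element $\psi(\gamma)\in H$. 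One checks $\psi\colon G\to H$ is a homomorphism, it is surjective because both $\{f\circ\gamma\}$ and $H$ act transitively on the general fibre and $|h^{-1}(z)|=|H|$ forces surjectivity onto a set of the right size (here I would also invoke $\deg g = \deg h\cdot\deg f$, so $|G|=|H|\cdot\deg f$), and $\ker\psi=K$ by construction. Therefore $K\trianglelefteq G$ and $H\cong G/K$.

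Finally I would identify $f$ with the quotient map $X\to X/K$. Consider the induced morphism $\bar f\colon X/K\to Y$: by the universal property of the quotient, $f$ factors as $X\to X/K\xrightarrow{\bar f}Y$ since $K$ acts trivially on $f$ (that is, $f\circ\gamma=f$ for $\gamma\in K$ by definition of $K$). Both $X/K\to Z$ and $Y\to Z$ are finite of the same degree $|G/K|=|H|$, and $\bar f$ is a morphism over $Z$ between them; since $X/K\to Z$ is generically $|G/K|$-to-one and $\bar f$ is dominant, $\bar f$ is generically injective, hence birational, and being a morphism of normal projective varieties that is birational and finite it is an isomorphism. Thus $f$ is the quotient by $K$.

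The main obstacle I anticipate is the bookkeeping around ramification: the clean "$G$ acts simply transitively on fibres" picture only holds over the unramified locus, so I must be careful that (a) $K$ is well-defined using a general fibre, (b) the homomorphism $\psi$ defined a priori only on a dense open set of $Y$ extends to all of $Y$, which uses properness/normality and the fact that a deck transformation of a Galois cover is determined on a dense open set, and (c) the final birational morphism $\bar f$ is genuinely an isomorphism and not merely a birational map — here normality of $X/K$ (inherited from $X$, as $X$ is smooth) together with Zariski's main theorem does the job. None of these steps is hard, but they are the places where the argument could be made sloppy.
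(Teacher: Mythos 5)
Your proof is correct and follows essentially the same route as the paper's: both construct a homomorphism $G \to H$ by matching the deck actions over a general (unramified) fibre, upgrade the pointwise identity $f(\gamma x)=\psi(\gamma)(f(x))$ to the global identity $f\circ\gamma=\psi(\gamma)\circ f$ by uniqueness of lifts agreeing at one point, take $K$ to be the kernel, and conclude that the induced finite degree-one morphism $X/K \to Y$ is an isomorphism. The paper's version is terser (it cites Massey for the lifting uniqueness and leaves the surjectivity/degree bookkeeping implicit), but the substance is identical.
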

\begin{proof}
Choose a smooth point $p_Z$ in $Z$ that is a regular value for both $g$ and $h$. Choose $p_X \in g^{-1}(p_Z)$. 
We define a map $\Theta \colon G \rightarrow H$ associating to  every $\phi \in G$ the unique $\Theta(\phi):=\psi \in H$ determined by the property $f \circ \phi (p_X)= \psi \circ f(p_X)$ . Then (cf.  \cite{Massey}*{Lemma V.3.2})  $f \circ \phi = \psi \circ f$ since they are two liftings of the same map coinciding in a point.

Then set $K:= \ker \Theta$. For all $\phi \in K$ it holds $f \circ \phi =  f$. So $K \subset \Aut(f)$ and then $f$ factors through the quotient by $K$. The resulting map $X/K \rightarrow Y$ is finite and of degree $1$ and therefore it is an isomorphism.
\end{proof}

\begin{lemma}\label{ro2inducesiso}
Let $C$ be a curve. Then  
\[
\rho_2(C)_* \colon \pi_1 (\Sym^2C) \rightarrow \pi_1(J(C))\cong \ZZ^{2g(C)}
\] is an isomorphism.
\end{lemma}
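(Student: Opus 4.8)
The plan is to show that $\rho_2(C)\colon \Sym^2 C \to J(C)$ induces an isomorphism on fundamental groups by analyzing the only curve it contracts. First I would recall the standard facts about this Abel--Jacobi map, already stated in the excerpt: if $C$ is not hyperelliptic (or has genus $\le 1$, or genus $0$ where the statement is trivial) then $\rho_2(C)$ is an embedding, so there is nothing to prove in that case since the image is a smooth projective surface deformation equivalent to... no, more directly: an embedding of a smooth projective variety induces an isomorphism on $\pi_1$ onto its image, and the image $\Sym^2 C \subset J(C)$ is an ample divisor (a theta divisor) in the abelian variety $J(C)$ of dimension $g \ge 2$, so by the Lefschetz hyperplane theorem $\pi_1(\Sym^2 C) \to \pi_1(J(C)) \cong \ZZ^{2g}$ is an isomorphism. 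For $g=0$ both sides are trivial and for $g=1$ one has $\Sym^2 C \to J(C) \cong C$ a $\PP^1$-bundle, inducing an isomorphism on $\pi_1 \cong \ZZ^2$.

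The remaining, and genuinely interesting, case is $C$ hyperelliptic of genus $g \ge 2$. Here $\rho_2(C)$ is not injective: it contracts the unique $g^1_2$, i.e.\ the smooth rational curve $R \subset \Sym^2 C$ consisting of the divisors $p + \iota(p)$ (where $\iota$ is the hyperelliptic involution), to a single point $P_0 \in J(C)$, and is an isomorphism of $\Sym^2 C \setminus R$ onto $\Theta \setminus \{P_0\}$ where $\Theta$ is the theta divisor. I would then argue as follows. Since $R \cong \PP^1$ is simply connected and has a neighborhood retracting onto it, van Kampen's theorem (applied to $\Sym^2 C$ as the union of a tubular neighborhood of $R$ and the complement of $R$) gives that $\pi_1(\Sym^2 C \setminus R) \to \pi_1(\Sym^2 C)$ is surjective with kernel normally generated by the image of $\pi_1$ of a small loop around $R$; but $R$ has codimension one, so removing it can only kill elements, and in fact the class of a meridian loop around $R$ maps to a well-defined element that is killed. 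Rather than track that torsion carefully, the cleaner route is: $\Sym^2 C \setminus R \cong \Theta \setminus \{P_0\}$, and since $\Theta$ is a normal projective surface with at worst an isolated singularity at $P_0$ — in fact for hyperelliptic $C$ the point $P_0$ is the image of the contracted $\PP^1$ with self-intersection $1-g$ — removing the single point $P_0$ from $\Theta$ does not change $\pi_1$ when $\dim \Theta = g - 1 \ge 2$... wait, for $g=2$ we have $\dim \Theta = 1$, so one must be slightly careful; I would handle $g=2$ directly since there $\Sym^2 C \to J(C)$ is the blow-down of a $(-1)$-curve on a surface and the statement that blowing down a curve isomorphic to $\PP^1$ does not change $\pi_1$ is classical.

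The uniform argument I would ultimately write: $\rho_2(C)$ factors as the contraction of the single rational curve $R$ (empty or a point if $C$ is non-hyperelliptic or $g\le 1$) onto its image $\Theta \subset J(C)$, followed by the inclusion $\Theta \hookrightarrow J(C)$. Contracting a curve isomorphic to $\PP^1$ on a smooth projective surface induces an isomorphism on $\pi_1$ (this is standard: a normal surface singularity that is the contraction of a smooth rational curve has a neighborhood whose fundamental group is generated by the loop around the exceptional curve, which bounds a disk, hence is a local homeomorphism on $\pi_1$ up to that vanishing cycle, and globally van Kampen shows the contraction map is a $\pi_1$-isomorphism). Then $\Theta$ is an ample divisor in the abelian variety $J(C)$ of dimension $g \ge 2$, so the Lefschetz hyperplane theorem gives $\pi_1(\Theta) \xrightarrow{\sim} \pi_1(J(C)) \cong \ZZ^{2g}$. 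Composing, $\rho_2(C)_*$ is an isomorphism. The main obstacle, and the step deserving the most care in the final write-up, is justifying that contracting the $(1-g)$-curve $R$ to the possibly singular point of $\Theta$ induces a $\pi_1$-isomorphism — one must invoke that $R \cong \PP^1$ is simply connected with a regular (Milnor-fiber-type) neighborhood, so the local fundamental group of $\Theta$ at $P_0$ is trivial, e.g.\ via Mumford's description of the link or simply via van Kampen using that $\Sym^2 C$ is the union of the preimage of a contractible neighborhood of $P_0$ (which deformation retracts to $R$) and the preimage of $\Theta \setminus \{P_0\}$.
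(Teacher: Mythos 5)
Your argument has a genuine gap at its key step: you identify the image of $\rho_2(C)$ in $J(C)$ with ``an ample divisor (a theta divisor)'' and then invoke the Lefschetz hyperplane theorem to conclude that $\pi_1$ of the image maps isomorphically onto $\pi_1(J(C))$. But the image of $\Sym^2 C$ under the Abel--Jacobi map is the subvariety usually denoted $W_2$, which has dimension $2$ and hence codimension $g-2$ in $J(C)$: it coincides with (a translate of) the theta divisor $W_{g-1}$ only when $g=3$, while for $g=2$ the map is surjective and birational (the image is all of $J(C)$, not a curve of dimension $g-1=1$ as you write), and for $g\geq 4$ the image has codimension at least $2$. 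In that last range the hyperplane-section form of the Lefschetz theorem simply does not apply, and the standard generalizations to submanifolds with ample normal bundle require $\dim W_2-\operatorname{codim}W_2=4-g\geq 2$, which fails for every $g\geq 3$. The statement $\pi_1(W_d)\cong\pi_1(J(C))$ is true, but the usual way to prove it is precisely via the computation of $\pi_1(\Sym^d C)$, so as written your argument is circular at the one point that matters. (Your contraction step --- that collapsing the rational curve of divisors $p+\iota(p)$, a $\PP^1$, does not change $\pi_1$ --- is fine, but it only reduces the problem to the unproved Lefschetz-type claim.)

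For comparison, the paper sidesteps all of this with a two-line direct computation: by Brown, \emph{Topology and Groupoids}, 11.5.4, one has $\pi_1(\Sym^2C)\cong H_1(C,\ZZ)\cong\ZZ^{2g}$, with generators the classes of the images of loops $\gamma\times\{p\}$; since $\rho_2(C)\circ f_p=\rho_1(C)$ for $f_p(q)=q+p$ and $\rho_1(C)\colon C\to J(C)$ induces an isomorphism on $H_1(\cdot,\ZZ)$, the map $\rho_2(C)_*$ is an isomorphism. To salvage your route you would need either Macdonald's computation of the topology of symmetric products or a genuine Lefschetz theorem for the subvarieties $W_d$ of a Jacobian --- both heavier than the two elementary facts the paper uses, and no case division into hyperelliptic and non-hyperelliptic curves is needed.
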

\begin{proof}
By \cite{Topologyandgroupoids}*{11.5.4} or Armstrong Theorem \cite{Armstrong} the map $C \times C \rightarrow \Sym^2 C$ 
induces a surjection on fundamental groups with kernel normally generated by the loops of the form $(\gamma,\gamma^{-1})$: in other words $\pi_1(Sym^2(C))$ is the quotient of $\pi_1(C)^2$ by the relation $(\gamma,1) \sim (1,\gamma)$.

 In particular the fundamental group of $ \Sym^2C$ is abelian, isomorphic to the abelianization of the fundamental group of $C$, $H_1(C,\ZZ)\cong \ZZ^{2g(C)}$. More precisely, there is an  isomorphism mapping the homotopy class of the image in  $\Sym^2(C)$ of a loop $\gamma \times \{p\}$ in $C \times C$ to the homology class of $\gamma$ in $H_1(C,\ZZ)$. 

Since $\rho_1(C) = \rho_2(C) \circ f_p$ for $f_p \colon C \rightarrow \Sym^2(C)$ defined via $f_p(q)=q+p$  and it is 
well known that $\rho_1(C)$, the natural map from a curve to its Jacobian, induces an isomorphism among the first homology groups with integral coefficients, the result follows.
\end{proof}

We are now able to prove our main theorem.

\begin{theorem}\label{Albanese and generalized dihedral surfaces}
Let $S$ be an irregular mixed surface, minimal resolution of a mixed quotient $r \colon S \rightarrow X= (C \times C)/G$ where $G \subset \Aut(C \times C)$  is a minimal action. 

We consider the index two subgroup $G^0$ as a subgroup of $\Aut(C)$ by its action on the first factor. Let $\Sigma \subset G^0$ be the set of the elements fixing at least one point of $C$.

Let $K \subset G^0$ be the smallest  subgroup of $G^0$ normal  in $G$ containing $\Sigma$, the commutator subgroup $[G^0,G^0]$ and $\{ g^2 | g \in G \setminus G^0\}$.

Then $\Alb(S)$ is isogenous to $ J(C/G^0)$ via an  isogeny with kernel isomorphic to $G^0/K$.  Moreover,  if $q(S) \geq 2$, then $S$ is of maximal Albanese dimension and the degree of its Albanese morphism equals $|K|$.

More precisely, there is a commutative diagram
\begin{equation*}   
   \xymatrix{ 
S\ar[d]^{\alpha_S}\ar[r]^{r}&        \frac{C \times C}{G} \ar[r]^{\pi_{K}}\ar[d]^{\alpha_{ \frac{C \times C}{G}}}&    \frac{C \times C}{G_K} \ar[r]^{\rho_K}\ar[d]^{\alpha_{ \frac{C \times C}{G_K}}}&  \Sym^2 (C/G^0)\ar[d]^{\rho_2(C/G^0)} \\  
\Alb(S)\ar@{=}[r] &     \Alb \left(  \frac{C \times C}{G} \right) \ar@{=}[r]&\Alb\left(  \frac{C \times C}{G_K} \right) \ar[r]^{\Alb(\rho_{K})}& J(C/G^0).}
\end{equation*}
whose vertical maps are Albanese morphisms and
 $\alpha_{ \frac{C \times C}{G_K}}$  is generically injective when $q(S) \geq 2$, whereas if $q=1$ all its fibres are rational. Moreover $\rho_K$ and $\Alb(\rho_K)$ are Galois \'etale covers with Galois group $G^0/K$.
\end{theorem}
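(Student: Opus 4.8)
The plan is to build the commutative diagram from right to left and reduce every claim to the analysis already developed in Sections \ref{GroupTheory}--\ref{FurtherQuotients} together with the two preliminary lemmas. First I would address the further quotient $Y_K:=(C\times C)/G_K$. By the choice of $K$ it contains $\Sigma$, so $G_K$ contains all elements of $G^0\times\{1\}$ fixing a point on the first factor, and symmetrically on the second; combined with $\{g^2\mid g\in G\setminus G^0\}\subset K$ this should force the action of $G_K$ on $C\times C$ to be free in codimension one off the locus already contracted by $\rho_2(C/G^0)$, hence $Y_K$ has at worst the quotient singularities coming from $\Sym^2(C/G^0)$ (the diagonal, when $C/G^0$ is hyperelliptic). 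The key point is that $\rho_K\colon Y_K\to\Sym^2(C/G^0)$ is then \'etale: it is the quotient by $G^0(2)/G_K$, and by Proposition \ref{normality} together with the hypotheses on $K$ (namely $[G^0,G^0]\subset K$ and $g\,\Ad_{\tau'}(g)\in K$ for the relevant $g$, which follows from $\Sigma$ and the squares being in $K$) the group $G_K$ is normal in $G^0(2)$ with $G^0(2)/G_K\cong G^0/K$, so $\rho_K$ is Galois with group $G^0/K$; and the ramification of the quotient map $C\times C\to\Sym^2(C/G^0)$ over the image of the diagonal and over the fixed loci of $\Sigma$ is entirely ``used up'' inside the sublevel $C\times C\to Y_K$, leaving $\rho_K$ unramified. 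This is the step I expect to be the main obstacle: checking that $G_K$ meets the stabilizer of every point of $C\times C$ in exactly that point's stabilizer, i.e. that no extra branching survives, which requires a careful bookkeeping of stabilizers in $G^0(2)$ via the explicit form \eqref{explicit mixed action}.

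Granting that, the rest is formal. Since $\rho_K$ is \'etale and $\Sym^2(C/G^0)$ has the homotopy type handled by Lemma \ref{ro2inducesiso}, $Y_K$ is smooth and $\rho_2(C/G^0)\circ\rho_K$ induces on $\pi_1$, hence on $H_1$, the same map as $\rho_2(C/G^0)$ up to the finite-index inclusion $\pi_1(Y_K)\hookrightarrow\pi_1(\Sym^2(C/G^0))$; because $\rho_2(C/G^0)_*$ is an isomorphism (Lemma \ref{ro2inducesiso}) this forces $\pi_1(Y_K)\to\ZZ^{2q}$ to be injective with image of index $|G^0/K|$, so $\Alb(Y_K)\to J(C/G^0)$ is an isogeny with kernel $G^0/K$ and $\Alb(\rho_K)$ is the corresponding \'etale Galois cover of abelian varieties with group $G^0/K$. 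For the square over $\pi_K$: the finite map $\pi_K\colon X\to Y_K$ has degree $|K|$ and, by the universal property of the Albanese, induces an isomorphism on Albanese varieties provided $\pi_K^*$ is an isomorphism on global $1$-forms; but $\pi_{G^0}=\rho_K\circ\pi_K$ and $(\pi_{G^0}\circ r)^*$ is an isomorphism on $1$-forms by Proposition \ref{g=q}, while $\rho_K^*$ is an isomorphism on $1$-forms because $\rho_K$ is \'etale with $H^0(\Omega^1_{Y_K})=H^0(\Omega^1_{\Sym^2(C/G^0)})^{G^0/K}$ of the right dimension $q$ (Proposition \ref{g=q} again, since $q(S)=g(C/G^0)=\dim H^0(\Omega^1_{\Sym^2(C/G^0)})$); hence $\pi_K^*$ is an isomorphism on $1$-forms, giving $\Alb(X)\cong\Alb(Y_K)$ and likewise $\Alb(S)\cong\Alb(X)$ since $r$ is a birational morphism. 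Minimality of $K$ guarantees this is the \emph{smallest} such factorization, so the kernel is exactly $G^0/K$.

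Finally I would treat the Albanese morphism of $Y_K$ itself. The composition $Y_K\xrightarrow{\alpha}\Alb(Y_K)\xrightarrow{\Alb(\rho_K)}J(C/G^0)$ equals $\rho_2(C/G^0)\circ\rho_K$. When $q(S)=g(C/G^0)\geq2$ and $C/G^0$ is not hyperelliptic, $\rho_2(C/G^0)$ is an embedding, so $\rho_2\circ\rho_K$ is finite and $\alpha_{Y_K}$ is generically injective; when $C/G^0$ is hyperelliptic, $\rho_2(C/G^0)$ contracts exactly the $g^1_2$-curve and, $\rho_K$ being \'etale, $\rho_2\circ\rho_K$ contracts its finitely many preimages, so again $\alpha_{Y_K}$ is generically injective. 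In the $q=1$ case $\Sym^2(C/G^0)$ is a ruled surface over $J(C/G^0)$ with rational fibres, and since $\rho_K$ is \'etale the fibres of $Y_K\to J(C/G^0)$, hence of $\alpha_{Y_K}$, are (disjoint unions of) rational curves. Pulling back along the birational $r\circ$ (and contracting in the other direction) transports generic injectivity of $\alpha_{Y_K}$ through $\pi_K$ to $\alpha_X$ and then to $\alpha_S$ when $q(S)\geq2$: this gives maximal Albanese dimension, and the degree of $\alpha_S$ equals the degree of $\pi_K$, namely $|K|$, because $\alpha_{Y_K}$ is birational onto its image and the Albanese varieties of $S$, $X$, $Y_K$ are canonically identified. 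Assembling these pieces yields the displayed diagram with all the stated properties.
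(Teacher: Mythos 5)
Your overall architecture (factor the Albanese morphism through the further quotient $(C\times C)/G_K$, prove $\rho_K$ is Galois \'etale with group $G^0/K$, then transport the Albanese data) is the same as the paper's, but the pivotal step of your second paragraph is not correct as stated: a finite surjective morphism inducing an isomorphism on global $1$-forms induces an \emph{isogeny} of Albanese varieties, not an isomorphism (any nontrivial isogeny of abelian varieties is a counterexample). So your argument only produces a factorization $\Alb(S)\to\Alb\bigl((C\times C)/G_K\bigr)\to J(C/G^0)$ in which the second arrow has kernel $G^0/K$; it never rules out that the first arrow has degree $>1$, i.e.\ that the kernel of the isogeny $\Alb(S)\to J(C/G^0)$ is strictly larger than $G^0/K$ and the degree of $\alpha_S$ strictly smaller than $|K|$. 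Your appeal to ``minimality of $K$'' is the right instinct but cannot be applied as stated: to use it you must first know that the \'etale cover $Z\to\Sym^2(C/G^0)$ obtained by pulling back the \emph{full} isogeny $\Alb(S)\to J(C/G^0)$ (via Lemma \ref{ro2inducesiso}) is itself an intermediate quotient $(C\times C)/G'$ with $G\subset G'\subset G^0(2)$ — that is Lemma \ref{liftingthenquotient} — hence $G'=G_{K'}$ by Theorem \ref{G_K}, with $K'\supset[G^0,G^0]$ and $K'\ni h\Ad_{\tau'}(h)$ by Proposition \ref{normality}, and with $\Sigma\subset K'$, $\tau\in K'$, $\{g^2\mid g\in G\setminus G^0\}\subset K'$ forced by \'etaleness of $Z\to\Sym^2(C/G^0)$. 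Only then does minimality give $K\subset K'$, hence $\deg\bigl(\Alb(S)\to J(C/G^0)\bigr)=|G^0/K'|\leq|G^0/K|$, which combined with your lower bound yields $K=K'$. This mechanism is the heart of the paper's proof and is absent from your proposal.

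Separately, the step you yourself flag as ``the main obstacle'' — that $G_K$ contains the stabilizer of every point of $C\times C$, so that $\rho_K$ is \'etale — is essential and you do not carry it out. It is in fact a short explicit computation rather than a delicate codimension-one bookkeeping: if $(g,h)$ fixes a point then $g\in\Sigma\subset K$ and $h\in\Ad_{\tau'}^{-1}(\Sigma)\subset K$, so $(g,h)\in K\times K\subset G_K$; if $\tau'(g,h)$ fixes a point then $\tau gh\in\Sigma\subset K$, and using $\tau\in K$ and $h\Ad_{\tau'}(h)\in K$ one checks $\tau'(g,h)\in G_K$. Also note that $\Sym^2(C/G^0)$ is smooth, so there are no ``quotient singularities coming from the diagonal'' to worry about. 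The remaining points of your proposal (the Galois group $G^0/K$ via Proposition \ref{normality}, the $\pi_1$ computation from Lemma \ref{ro2inducesiso}, generic injectivity of $\alpha_{(C\times C)/G_K}$ for $q\geq2$ and rational fibres for $q=1$) are in order and agree with the paper.
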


\begin{proof}
Since $\rho_2(C/G^0) \circ \pi_{G^0} \circ r$ is a morphism from $S$ to the Abelian variety $J(C/G^0)$, it factors through the Albanese morphism of $S$. Since by Proposition \ref{g=q} $(\pi_{G^0} \circ r)^*$ is an isomorphism, the induced map $\Alb(S) \rightarrow J(C/G^0)$ is an isogeny.

By Lemma \ref{ro2inducesiso} every \'etale cover of  $J(C/G^0)$ induces a unique  \'etale cover of 
$\Sym^2(C/G^0)$ such that the map $\rho_2(C/G^0)$ lifts  to a map among the covers.
This gives a Galois \'etale cover $Z \rightarrow \Sym^2(C/G^0)$  with a map $Z \rightarrow \Alb(S)$ that lifts the map $\Sym^2(C/G^0) \rightarrow J(C/G^0)$ and therefore inherits some of its properties: it is generically injective if $q(S) \geq 2$ and with all fibres rational if $q(S)=1$.

By Lemma \ref{liftingthenquotient} there is a normal subgroup $G'$ of ${G^0(2)}$ containing $G$ such that  $Z=(C \times C) /G'$. 
By Theorem \ref{G_K} and  Proposition \ref{normality}  $G'=G_{K'}$ for a subgroup $K'$ of $G^0$ normal in $G$ containing $[G^0,G^0]$ and all elements of the form $h\Ad_{\tau'}(h)$.

Moreover $\rho_{K'} \colon Z \rightarrow \Sym^2(C/G^0)$ is  \'etale. Then all elements of ${G^0(2)}$ fixing a point of $C \times C$  must be contained in $G_{K'}$. So $\Sigma \times \{1\}\subset G_{K'}$ and then $\Sigma \subset K'$. Moreover $\sigma \in G_{K'}$, so $\sigma\tau' =(\tau,1) \in G_{K'}$, so $\tau \in K'$.
Finally note that $h\Ad_{\tau'}(h)\tau=(h\tau')^2$, so  $\{ g^2 | g \in G \setminus G^0\}\subset K'$.

So $K \subset K'$. Note that the kernel of the isogeny $\Alb(S) \rightarrow J(C/G^0)$ is isomorphic to $G^0/K'$.

We note that $G_{K}$ contains all elements in ${G^0(2)}$ fixing some points in $C \times C$. Indeed, for all  $ g,h\in G^0$, if $(g,h)$ fixes a point of $C \times C$ then $g \in \Sigma \subset K$, $h \in \Ad_{\tau'}^{-1}(\Sigma) \subset K$, so $(g,h) \in K \times K \subset G_{K}$. Similarly if $\tau'(g,h)=\sigma(\tau g,h)$ fixes a point $(x,y)$, $y=\tau ghy$ so $\tau gh \in \Sigma \subset K$, then $gh \in K$ and therefore $
\Ad_{\tau'}^{-1}(h)g^{-1}= \Ad_{\tau'}^{-1} \left( h \Ad_{\tau'}(h) \right) (gh)^{-1} \in K$: this
proves that  $\tau'(g,h)$ belongs to $G_{K}$.

This implies that $\rho_{K}$ is \'etale and then arguing as above the map $\Alb((C \times C) /G_K) \rightarrow J(C/G^0)$ is an isogeny whose kernel is isomorphic to $G^0/K$. Since the isogeny $\Alb(S) \rightarrow J(C/G^0)$ of kernel $G^0/K'$ factors through it, then $K' \subset K$ so $K=K'$.
\end{proof}

\begin{remark}\label{polarizations}
If $G^0/K \cong \ZZ/d_1\ZZ \times \ZZ/d_1d_2\ZZ \times \cdots \times \ZZ/d_1 \cdots d_q \ZZ \
$, the pull back of the Hodge form  giving a principal polarization on $J(C/G^0)$ divided by $d_1$ provides $\Alb (S)$ with a polarization of type $(1,d_2,\ldots,d_q)$.
\end{remark}

We discuss now some properties of the very special mixed surface $(C \times C )/G_K$  in Theorem \ref{Albanese and generalized dihedral surfaces}. 
Let us first recall \cite{CanFr15}*{Definition 3.1}.
\begin{definition}\label{semisogenous}
A semi-isogenous mixed surface is a mixed quotient $X:= \left( C \times C \right) /G$ such that $G^0$ acts freely on $C \times C$.
\end{definition}
The following is proved in \cite{CanFr15}*{Corollary 2.11}.
\begin{proposition}
Let $X$ be a semi-isogenous mixed surface. Then $X$ is smooth.
\end{proposition}

Theorem \ref{Albanese and generalized dihedral surfaces} suggests to consider the following smaller class of surfaces.

\begin{definition}\label{GDS}
Let $G^0$ be an abelian group acting freely on a curve $C$.

The dihedral surface $X_{C,G^0}$ is the semi-isogenous mixed surface quotient of $C \times C$ by the induced mixed action of the  generalized dihedral group $D(G^0)$.
\end{definition}

Indeed:
\begin{proposition}
The surface $(C \times C)/G_K$ in Theorem \ref{Albanese and generalized dihedral surfaces} is a dihedral surface $X_{C/K,G^0/K}$.
\end{proposition}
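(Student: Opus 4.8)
The plan is to unwind the various definitions and show that the minimal realization of the further quotient $(C\times C)/G_K$ is exactly the dihedral surface attached to the pair $(C/K, G^0/K)$. Recall from Section \ref{FurtherQuotients} that $K\times K$ is a normal subgroup of $G_K$, so the quotient map factors through $(C\times C)/(K\times K)\cong C/K\times C/K$, and the residual action is that of $G_K/(K\times K)\cong G/K$ by Remark \ref{KxK}; this is a minimal mixed action, so it gives the minimal realization $\frac{C/K\times C/K}{G/K}$ of the further quotient. Thus the first task is purely group-theoretic: identify $G/K$, together with its index-two subgroup $(G^0K)/K\cong G^0/K$ and the conjugation by the class of $\tau'$, with the generalized dihedral group $D(G^0/K)$ acting in the standard way.

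For this I would invoke Proposition \ref{normality}. By hypothesis (the definition of $K$ in Theorem \ref{Albanese and generalized dihedral surfaces}), $K$ contains $[G^0,G^0]$ and all elements $g\,\Ad_{\tau'}(g)$ with $g\in G^0$ — indeed $\Sigma$, $[G^0,G^0]$ and $\{g^2\mid g\in G\setminus G^0\}$ all lie in $K$ by construction, and $g\,\Ad_{\tau'}(g)=g^2\cdot g^{-1}\Ad_{\tau'}(g)$ together with $h\Ad_{\tau'}(h)\tau=(h\tau')^2$ (as in the proof of Theorem \ref{Albanese and generalized dihedral surfaces}) shows that $K$ absorbs these elements. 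Hence $G_K$ is normal in $G^0(2)$, $G^0/K$ is abelian, and $G/K\cong D(G^0/K)$ with the conjugation by the class of $\tau'$ acting as inversion on $G^0/K$, as stated in the discussion following Proposition \ref{normality}. So $G/K$ really is the generalized dihedral group of the abelian group $G^0/K$.

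It remains to check that the induced action of $G^0/K$ on $C/K$ is free and that the resulting mixed action of $D(G^0/K)$ on $C/K\times C/K$ is the one of Definition \ref{GDS}. Freeness is where the condition $\Sigma\subset K$ is used: if a nontrivial class $gK\in G^0/K$ fixed a point of $C/K$, a standard covering-space argument (the quotient $C\to C/K$ is a Galois cover on which $G^0/K$ acts, so a stabilizer in $G^0/K$ of a point downstairs is realized by an element of $G^0$ fixing a point of $C$ modulo $K$) produces an element of $\Sigma\setminus K$, contradicting $\Sigma\subset K$; more directly, an element of $G^0$ fixing a point of $C/K$ lies in $\Sigma\cdot K=K$. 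Since $G^0/K$ is abelian and acts freely on $C/K$, Definition \ref{GDS} applies and yields the dihedral surface $X_{C/K,\,D(G^0/K)}$; matching it with the minimal realization $\frac{C/K\times C/K}{G/K}$ computed above gives the claim. The only mild subtlety — and the step I would be most careful about — is checking that the mixed action of $G/K$ on $C/K\times C/K$ inherited from $G_K$ coincides with the standard generalized dihedral action of Definition \ref{GDS}, i.e. tracking through the isomorphism $G_K/(K\times K)\cong G/K$ and the explicit formulas of Theorem \ref{charmixquot} to see that the $\tau'$-conjugation descends to inversion; but this is exactly the content of Remark \ref{KxK} combined with Proposition \ref{normality}, so no new idea is needed.
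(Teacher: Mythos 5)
Your proof is correct and follows essentially the same route as the paper's: compute the minimal realization $\frac{C/K\times C/K}{G/K}$ via Remark \ref{KxK}, use $[G^0,G^0]\subset K$ and $\Sigma\subset K$ for abelianness and freeness, and use the identity $h\Ad_{\tau'}(h)=(h\tau')^2(\tau')^{-2}$ (a product of squares of elements of $G\setminus G^0$, hence in $K$) to see that the class of $\tau'$ acts by inversion, so $G/K\cong D(G^0/K)$. The detour through Proposition \ref{normality} and the identity $g\Ad_{\tau'}(g)=g^2\cdot g^{-1}\Ad_{\tau'}(g)$ are harmless but unnecessary; the paper argues directly.
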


\begin{proof}
The minimal realization of $(C \times C)/G_K$  is $(C/K \times C/K)/(G/K)$. 

Since $K$ contains $[G^0,G^0]$ and $\Sigma$, $G^0/K$ is abelian and acts freely on $C/K$.

Finally $K$ contains all elements of the form $h\Ad_{\tau'}(h)$ with $h \in G^0$, since they equal a product of squares $(h\tau')^2(\tau')^{-2}$ of elements in $G \setminus G^0$. Then the conjugation by the class of $\tau'$ acts on $G^0/K$ as $h \mapsto h^{-1}$.  Then $G/K \cong D\left(G^0/K \right)$.
\end{proof}

\begin{remark}\label{invZ2}
We compute the main invariants of the dihedral surfaces using the formulas in \cite{CanFr15}*{Section 3} (compare also Proposition \ref{g=q}) obtaining 
\begin{align*}
q(X_{C,G^0})=&\frac{g(C)-1}{G^0}+1,& \chi ({\mathcal O}_{X_{C,G^0}})=& \frac{|G^0|(q-1)(q-2)}{2},& K^2_{X_{C,G^0}}=&|G^0|(q-1)(4q-9).
\end{align*}
\end{remark}

\begin{example}\label{FabFrank}
Consider the dihedral surfaces $X_{C,G^0}$ with $|G^0|=2$, $g(C)=5$. Then by Theorem \ref{minimality}, Remark \ref{invZ2} and Theorem \ref{Albanese and generalized dihedral surfaces} ($K=\{1\}$), they are minimal surfaces of general type with $K_{X_{C,G^0}}^2=12$, $p_g=4$ and $q=3$ and Albanese morphism generically injective in an Abelian threefold isogenous to $J(C/G^0)$ via an isogeny of degree $2$. So by Remark \ref{polarizations} their Albanese varieties have a polarization of type $(1,1,2)$.

The surfaces with these invariants and Albanese morphism generically injective on an Abelian threefold with polarization $(1,1,2)$ have been studied in  \cite{CataneseSchreyer}, proving in particular that they form a subvariety of dimension $7$ of the Gieseker moduli space of the surfaces of general type. 
Our examples form a $6$-dimensional family,  since the map $X_{C,G^0} \mapsto C/G^0$ is finite and dominating the moduli space of curves of genus $3$. In fact
 \cite{CataneseSchreyer} noticed {\em a remarkable divisor} in their family, formed by surfaces whose Albanese variety is a double cover of a Jacobian. Then the general surface in this  divisor is a dihedral surface. 
 
 This shows that a small deformation of a dihedral surface is not necessarily a dihedral surface.
\end{example}

\section{Ramification of further quotients}
In the next and last section we will apply Theorem \ref{Albanese and generalized dihedral surfaces} to describe the Albanese morphism of the semi-isogenous surfaces with $p_g=q=2$ constructed in \cite{CanFr15}. To describe their   ramifications  we need to understand the ramification of $\pi_K$. We study it in this section for semi-isogenous mixed surfaces such that the action of $G^0$ on $C$ is free.

\begin{definition} Let $X$ be a semi-isogenous mixed surface.
For each $h \in G^0$ let $\Gamma_h=\{(x,hx))\in C \times C\}$ be the graph of $h$. For each $g \in G \setminus G^0$ define $R_g=\Gamma_{\tau'g}$.
\end{definition}
Then (\cite{CanFr15}*{Proposition 3.3})
\begin{proposition}\label{ramification}
Let $X$ be a semi-isogenous mixed surface. 

The ramification locus of the quotient map $ C \times C \rightarrow X$ 
is the disjoint union of the curves $R_g$, for $g \in O_2:=\{g \in G \setminus G^0 | g^2=1\}$. 
The stabilizer of each of these $R_g$ is $\langle g \rangle \cong \ZZ/2\ZZ$.
\end{proposition}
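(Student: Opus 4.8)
The plan is to reduce everything to the characterization of the ramification locus of the quotient map $q \colon C \times C \to X = (C \times C)/G$ in terms of fixed loci of the nontrivial elements of $G$, and then to identify these fixed loci. Since $G^0$ acts freely on $C$, the diagonal-type elements $(g, \Ad_{\tau'}(g))$ for $g \in G^0 \setminus \{1\}$ have no fixed point on $C \times C$ either, because a fixed point $(x,y)$ would force $gx = x$. Hence the only elements of $G$ that can contribute to the ramification are those in $G \setminus G^0$. Writing such an element in the form $\tau' g$ (equivalently $\sigma$ composed with a pair), I would compute directly from the explicit action (\ref{explicit mixed action}) that the fixed locus of an element of $G \setminus G^0$ is nonempty precisely when that element squares to the identity; this is where the set $O_2$ enters.

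The key computation is: an element of the form $\tau' g$ with $g \in G^0$ acts, after identifying via Theorem \ref{charmixquot}, by $(x,y) \mapsto (\varphi(\tau' g) y, \ldots)$ — more cleanly, $\tau' h$ for the appropriate $h$ acts as $(x,y) \mapsto (hy, \ldots x)$ for a suitable $h \in G^0$, so a fixed point $(x,y)$ satisfies $x = hy$ and $y = (\text{something}) x$, whence $(x, hx)$ lies on the graph $\Gamma_h$ and a consistency condition forces $(\tau' h)^2$ to fix $x$; since $G^0$ acts freely this means $(\tau' h)^2 = 1$. Conversely, if $(\tau' h)^2 = 1$ then every point of the graph $\Gamma_h = R_{g}$ (for the corresponding $g$) is fixed. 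This identifies $\mathrm{Fix}(g) = R_g$ for $g \in O_2$ and shows it is empty otherwise. The curves $R_g = \Gamma_{\tau' g}$ are graphs of automorphisms of $C$, hence smooth and isomorphic to $C$.

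Next I would check the union is disjoint and compute the stabilizers. If a point lay on $R_g \cap R_{g'}$ for distinct $g, g' \in O_2$, then it would be fixed by $g g'^{-1} \in G^0 \setminus \{1\}$ (note $g g'^{-1} \in G^0$ since both are in the nontrivial coset), contradicting freeness of the $G^0$-action; so the $R_g$ are pairwise disjoint. For the stabilizer of a general (indeed any) point $p \in R_g$: it contains $\langle g \rangle \cong \ZZ_{/2\ZZ}$ by construction, and any other stabilizing element is either in $G^0$ — impossible by freeness unless trivial — or in $G \setminus G^0$, i.e. of the form $g'$ with $p \in \mathrm{Fix}(g') = R_{g'}$, forcing $g' = g$ by the disjointness just established. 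Hence the stabilizer is exactly $\langle g \rangle$, and the ramification locus of $q$ is $\bigsqcup_{g \in O_2} R_g$.

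The main obstacle is purely bookkeeping: keeping the identification of $G$ inside $G^0(2)$ (via $\tau'$, $\tau$, $\varphi = \Ad_{\tau'}$) consistent between the normal form (\ref{explicit mixed action}) and the embedding in Remark \ref{dominating a Sym}, so that "squares to the identity" is correctly matched with the curve $R_g = \Gamma_{\tau' g}$ and not with a conjugate. Once the fixed-locus computation for a single element of $G \setminus G^0$ is done carefully, the disjointness and stabilizer statements are immediate consequences of the freeness of the $G^0$-action, so no further genuine difficulty arises. I would assume Theorem \ref{charmixquot} and the freeness hypothesis throughout, and cite \cite{CanFr15}*{Proposition 1.9} as the reference for the final packaging.
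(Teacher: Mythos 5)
The paper does not actually prove this proposition: it is quoted verbatim from \cite{CanFr15}*{Proposition 1.9} with no argument supplied, so there is no ``paper proof'' to match against. Your direct verification is the natural one and its structure is sound: reduce ramification to fixed loci of nontrivial elements of $G$, kill the elements of $G^0$ by freeness, compute from the normal form of Theorem \ref{charmixquot} that an element of $G\setminus G^0$ has nonempty fixed locus exactly when it is an involution, identify that locus with the graph $R_g$, and deduce disjointness and the stabilizer $\langle g\rangle\cong\ZZ_{/2\ZZ}$ from freeness again.

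There is, however, one concrete slip in the hypotheses. You open with ``Since $G^0$ acts freely on $C$,'' but Definition \ref{semisogenous} only requires $G^0$ to act freely on $C\times C$; these are not equivalent, and the paper itself treats them as distinct (Proposition \ref{simply ramified} explicitly adds ``such that the action of $G^0$ on $C$ is free'' as a \emph{further} hypothesis). The place where this matters in your argument is the consistency step: from $x=\varphi(h)y$ and $y=\tau h x$ you get that $(\tau'h)^2\in G^0$ fixes the point $x$ of the first factor, and you invoke freeness on $C$ to conclude $(\tau'h)^2=1$. As written this only proves the proposition under the stronger hypothesis. The repair is immediate and you should make it explicitly: the fixed point $(x,y)$ of $\tau'h$ is also a fixed point of $(\tau'h)^2\in G^0$ acting on $C\times C$, so freeness of $G^0$ on the product already forces $(\tau'h)^2=1$. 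The same remark applies to the disjointness and stabilizer steps, which in fact you can (and should) run entirely on $C\times C$, where they need nothing beyond the definition of semi-isogenous. With that adjustment the argument is complete and correct.
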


If the action of $G^0$ on $C$ is free, we  have a simple description of the ramification of $\pi_{G^0}$. 

\begin{proposition}\label{simply ramified}
Let $X$ be a semi-isogenous mixed surface such that the action of $G^0$ on $C$ is free. Then $\pi_{G^0}$ is simply ramified and the ramification locus is the \'etale image of the curves $\Gamma_h \subset C \times C$ such that $(\tau' h)^2 \neq 1$, {\it i.e.} of the curves $R_g$ such that $g \not\in O_2$. 
\end{proposition}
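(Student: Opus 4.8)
The plan is to work on $C \times C$ and pull everything back to the easily-understood setting of products and graphs, using Proposition \ref{ramification} as the starting point. First I would recall that the quotient map $p \colon C \times C \to X$ has ramification locus exactly $\bigcup_{g \in O_2} R_g$, each $R_g = \Gamma_{\tau' g}$ with stabilizer $\langle g\rangle$, and that $\pi_{G^0} \colon X \to \Sym^2(C/G^0)$ is the residual quotient by $G^0 \times \{1\}$ once one factors $p$ through $C \times C \to (C \times C)/(G^0 \times \{1\}) \cong C \times (C/G^0)$ (here I am using that $G^0$ acts freely, so $C \to C/G^0$ is étale). The composite $\pi_{G^0} \circ p \colon C \times C \to \Sym^2(C/G^0)$ factors as $C\times C \to C\times C/(G^0\times\{1\}) \to C\times C/G^0(2) = \Sym^2(C/G^0)$, and since $G^0 \times \{1\}$ acts freely the only ramification of $\pi_{G^0}\circ p$ comes from the residual step, i.e.\ from elements of $G^0(2)$ that fix points of $C\times C$ but do not lie in $G^0\times\{1\}$.

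Next I would identify those fixed-point-carrying elements. An element of $G^0(2) \setminus (G^0 \times G^0)$ is of the form $\sigma(a,b)$; it fixes $(x,y)$ iff $y = ax$ and $x = by$, i.e.\ iff $(x,y) \in \Gamma_a$ and $ba = 1$, so the fixed locus of such an element, when nonempty, is a graph $\Gamma_a$ (note $ba=1$ means $\sigma(a,a^{-1})$ is an involution). As $a$ ranges over $G^0$, these graphs $\Gamma_a$ in $C \times C$ are pairwise disjoint (two graphs $\Gamma_a$, $\Gamma_{a'}$ meet iff $a^{-1}a'$ has a fixed point on $C$, impossible for $a \neq a'$ since $G^0$ acts freely). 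Inside $C \times C$, the curves $\Gamma_a$ with $a \in G^0$ split into two classes: those with $\sigma(a,a^{-1}) \in G$, which are exactly the $R_g = \Gamma_{\tau' g}$ for $g \in O_2$ (writing $a = \tau' g$, the condition $a a = ?$... more precisely $\sigma(a,a^{-1})$ is in the image of $G$ iff $a \in \tau' G^0$ and $(\tau' g)^2$ ... ) — so these are the curves already contracted/ramified inside $X$ by $p$ — and those with $\sigma(a,a^{-1}) \notin G$, which produce genuinely new ramification of $\pi_{G^0}$ over $X$. The claim to match is precisely: the curves $\Gamma_h$, $h \in G^0$, with $(\tau' h)^2 \neq 1$ give the ramification of $\pi_{G^0}$, equivalently $R_g$ with $g \notin O_2$; I would verify the bookkeeping that $a = \tau' h$ with $\sigma(a,a^{-1})$ an involution of $G^0(2)$ translates into $(\tau' h)^2$ being trivial or not.

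For the \emph{simplicity} of the ramification I would argue as follows. Over a general point of $\Gamma_a$ (with $\sigma(a,a^{-1})$ fixing it), the stabilizer in $G^0(2)$ is generated by the single involution $\sigma(a,a^{-1})$: any larger stabilizer would force an element of $G^0 \times \{1\}$ or $\{1\} \times G^0$ with a fixed point, contradicting freeness of the $G^0$-actions, or another involution $\sigma(a',a'^{-1})$ whose graph would then meet $\Gamma_a$, again impossible. Since the map $C\times C/(G^0\times\{1\}) \to \Sym^2(C/G^0)$ is, near such a point, the quotient of a smooth surface by an order-two group acting with an isolated fixed \emph{curve}, it is analytically $(u,v)\mapsto(u,v^2)$, hence simply ramified; and because the image of $\Gamma_a$ under the étale map $C\times C \to C\times C/(G^0\times\{1\})$ is smooth and the $\Gamma_a$'s are disjoint, the branch locus downstairs is the reduced image of these curves with a single preimage over the general branch point, i.e.\ $\pi_{G^0}$ is simply ramified in the sense of the Notation section. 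The main obstacle I anticipate is the careful translation between the three models — $C\times C$ with its graphs $R_g$, the intermediate $C \times (C/G^0)$, and $\Sym^2(C/G^0)$ — making sure that ``$\Gamma_h$ with $(\tau' h)^2 \neq 1$'' on the product really does correspond to ``$R_g$ with $g \notin O_2$'' and that no $\Gamma_h$ with $h$ such that $\Gamma_h$ is \emph{not} the fixed locus of an element of $G^0(2)$ sneaks in; once the group-theoretic dictionary is pinned down, the local analysis and the disjointness are straightforward consequences of the freeness hypothesis.
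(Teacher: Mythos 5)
Your proposal is correct and follows essentially the same route as the paper: compute the fixed loci of elements of $G^0(2)$ on $C\times C$ using freeness of the $G^0$-action (so only the involutions $\sigma\circ(a,a^{-1})$ fix points, each along the graph $\Gamma_a$, with cyclic stabilizer of order two forcing simple ramification), and then invoke Proposition \ref{ramification} to separate the graphs already accounted for by $C\times C\to X$ from those producing ramification of the residual map $\pi_{G^0}$. The paper dispenses with your intermediate quotient $C\times(C/G^0)$ and argues directly with the tower $C\times C\to X\to \Sym^2(C/G^0)$, but this is only a cosmetic difference.
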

Here by {\em \'etale} image of a curve we mean the image of the curve, and that the curve does not contain any ramification point.
\begin{proof} The ramification points of the map $C \times C \rightarrow \Sym^2(C/G^0)$, quotient by the action of $G^0(2)$, are the points of $C \times C$ stabilized by a nontrivial subgroup of $G^0(2)$.

Since $G^0$ acts freely on $C$, then $G^0 \times G^0$ acts freely on $C \times C$. The elements in ${G^0(2)} \setminus (G^0 \times G^0)$ act as $(x,y)\mapsto (h'y,hx)$. Since $G^0$ acts freely, their set of fixed points is not empty only for $h'=h^{-1}$ in which case it is $\Gamma_h$. The stabilizer of $\Gamma_h$ is $\langle  (h^{-1},h)  \circ \sigma \rangle \cong \ZZ/2\ZZ$. 

So the quotient map by the action of $G^0(2)$ is simply ramified, with ramification locus $\bigcup_{h \in G^0} \Gamma_h$. Since by Proposition \ref{ramification} the quotient map $ C\times C \rightarrow X$ by the subgroup $G$ ramifies exactly on those $\Gamma_h$ such that $\tau' h \in O_2$, the {\em residual} map $\pi_{G^0}$ ramifies exactly on the image of the others.
\end{proof}

Now consider a further quotient as in Definition \ref{pi_K}, factoring $\pi_{G^0}$ as $\rho_K \circ \pi_K$.
We get  the following description of the ramification of $\pi_K$ and $\rho_K$. 

\begin{proposition}\label{ramification loci} 
Let $X$ be a semi-isogenous mixed surface such that the action of $G^0$ on $C$ is free. 
For each subgroup $K$ of $G^0$ normal in $G$, the maps $\rho_K$ and $\pi_K$ in Definition \ref{pi_K} are simply ramified.

The ramification locus of $\pi_K$ is $\bigcup_{g^2 \in K, g^2 \neq 1} R_g$.

The ramification locus of $\rho_K$ is the \'etale image of $\bigcup_{g^2 \not\in K} R_g$.
\end{proposition}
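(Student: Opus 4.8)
The plan is to compute the ramification loci of all the quotient maps from $C\times C$ that are in sight, and then descend. Write $q_G\colon C\times C\to X$, $q_{G_K}\colon C\times C\to(C\times C)/G_K$ and $q_{G^0(2)}\colon C\times C\to\Sym^2(C/G^0)$ for the three quotient maps, so that $q_{G^0(2)}=\rho_K\circ q_{G_K}=\rho_K\circ\pi_K\circ q_G$; all three targets are smooth surfaces ($X$ and $(C\times C)/G_K$ are semi-isogenous mixed surfaces — for the second one because $G^0/K$ acts freely on $C/K$, which follows from the hypothesis that $G^0$ acts freely on $C$ — and $\Sym^2$ of a smooth curve is smooth). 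Since $G^0$, hence $G^0\times G^0$, acts freely, the proof of Proposition \ref{simply ramified} shows that every nontrivial stabilizer in $G^0(2)$ of a point of $C\times C$ is the order-two group generated by one of the involutions whose fixed locus is a graph $\Gamma_h$, $h\in G^0$; denote it by $\iota_h=(h^{-1},h)\sigma$, and note the curves $\Gamma_h$ are pairwise disjoint because $G^0$ acts freely. Hence for $p\in\Gamma_h$ one has $\mathrm{Stab}_{G^0(2)}(p)=\langle\iota_h\rangle$, while $\mathrm{Stab}_G(p)$ and $\mathrm{Stab}_{G_K}(p)$ equal $\langle\iota_h\rangle$ if $\iota_h\in G$, resp. $\iota_h\in G_K$, and are trivial otherwise.

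The second step is a local analysis at a general point $p$ of a fixed $\Gamma_h$. In suitable analytic coordinates near $p$, each of $q_G$, $q_{G_K}$, $q_{G^0(2)}$ is either a local isomorphism or the double cover $(u,v)\mapsto(u,v^2)$ branched along the smooth curve $\Gamma_h$, according to whether $\iota_h$ lies in the corresponding group. Comparing the factorizations of $q_{G^0(2)}$ then yields: $\pi_K$ is ramified along the image of $\Gamma_h$ in $X$ exactly when $\iota_h\in G_K\setminus G$, and then — since $q_G$ is a local isomorphism near $p$ — $\pi_K$ is analytically $(u,v)\mapsto(u,v^2)$ there, i.e.\ simply ramified; symmetrically, $\rho_K$ is ramified along the image of $\Gamma_h$ in $(C\times C)/G_K$ exactly when $\iota_h\notin G_K$, and then $q_{G_K}$ is a local isomorphism near $p$, so this image is an \'etale image of $\Gamma_h$ and $\rho_K$ is again simply ramified there. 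At a point with trivial $G^0(2)$-stabilizer all three maps are local isomorphisms; hence these exhaust the ramification loci of $\pi_K$ and $\rho_K$, and both maps are simply ramified.

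It remains to translate the conditions on $\iota_h$ into the indexing by $g\in G\setminus G^0$: every $\Gamma_h$ equals $R_g$ for a unique $g\in G\setminus G^0$, and by Proposition \ref{ramification} one has $\iota_h\in G$ precisely when $g\in O_2$, i.e.\ $g^2=1$. The step I expect to be the main obstacle is the claim $\iota_h\in G_K\Leftrightarrow g^2\in K$. I would prove it by writing $g=\tau'g_0$ (so $h=\tau g_0$ and $g^2=\Ad_{\tau'}(g_0)\,\tau\,g_0$), representing $\iota_h$ and $\tau'$ in the coordinates of Section \ref{GroupTheory}, computing $(\tau')^{-1}\iota_h=(g_0,(\tau g_0)^{-1})\in G^0\times G^0$, and checking that the defining condition of $G^0_K$ applied to this element reads exactly $\Ad_{\tau'}(g_0)\,\tau\,g_0=g^2\in K$; since $G_K\setminus(G^0\times G^0)=\tau'G^0_K$, this gives the claim. (Alternatively one can obtain the ramification of $q_{G_K}$ by applying Proposition \ref{ramification} to the semi-isogenous mixed surface $(C\times C)/G_K$, whose minimal realization is $(C/K\times C/K)/(G/K)$, and pulling it back along the \'etale map $C\times C\to C/K\times C/K$.) Combining with the previous steps, the $\Gamma_h=R_g$ with $\iota_h\in G_K\setminus G$ are exactly those with $g^2\in K$, $g^2\neq1$, and those with $\iota_h\notin G_K$ are exactly those with $g^2\notin K$; this gives the two stated ramification loci. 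As a consistency check, $\rho_K\circ\pi_K=\pi_{G^0}$ and the two loci together recover the ramification locus of $\pi_{G^0}$ (the $R_g$ with $g\notin O_2$) of Proposition \ref{simply ramified}.
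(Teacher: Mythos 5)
Your proof is correct, and the key computation it rests on --- the equivalence $\iota_h\in G_K\Leftrightarrow g^2\in K$, verified via $(\tau')^{-1}\iota_h=(g_0,(\tau g_0)^{-1})$ and the defining condition of $G^0_K$ --- checks out: $(\tau g_0)^{-1}\Ad_{\tau'}(g_0)^{-1}=(\Ad_{\tau'}(g_0)\tau g_0)^{-1}=(g^2)^{-1}$, so membership in $G^0_K$ is exactly $g^2\in K$. Your route is, however, different from the paper's. The paper never computes stabilizers in $G_K$ at all: it identifies $\rho_K$ with the map $\pi_{G^0/K}$ attached to the minimal realization $(C/K\times C/K)/(G/K)$ of the further quotient, applies Proposition \ref{simply ramified} to that semi-isogenous surface to read off the ramification of $\rho_K$ as the \'etale images of the $R_{[g]}$ with $[g]^2\neq 1$ in $G/K$ (i.e.\ $g^2\notin K$), and then obtains the ramification of $\pi_K$ residually from $\pi_{G^0}=\rho_K\circ\pi_K$, since each simple ramification curve of $\pi_{G^0}$ must be accounted for by exactly one of the two factors. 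This is essentially the parenthetical alternative you mention. What the paper's argument buys is brevity and no explicit coordinates in $G^0(2)$; what your local-stabilizer argument buys is that it establishes the simple ramification and the normal forms of all three maps simultaneously and makes the group-theoretic origin of the condition $g^2\in K$ completely transparent, at the cost of redoing the fixed-point analysis for $G_K$ by hand.
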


\begin{proof}
$\rho_K$ is the map $\pi_{G^0/K} \colon \frac{C/K \times C/K}{G/K} \rightarrow \Sym^2(C/G^0)$ and therefore, by Proposition \ref{simply ramified}, it ramifies on the curves $R_{[g]} \subset C/K \times C/K$, where $[g]$ is the class of $g$ in $G/K$, such that $[g]^2 \neq 1$:
these are the \'etale images of the curves  $R_g$ such that $g^2 \not\in K$.

By Proposition \ref{simply ramified} $\pi_{G^0}$ is simply ramified at the $R_g$ such that $g^2 \neq 1$. 
Since $\pi_{G^0}=\rho_K \circ \pi_K$, the ramification locus of $\pi_K$ is the union of the ramification curves of $\pi_{G^0}$ not dominating ramification curves of $\rho_K$.
\end{proof}

\section{Applications}\label{AlbaneseMorphisms}

In this final section we apply Theorem \ref{Albanese and generalized dihedral surfaces} to compute  the Albanese morphism of several mixed surfaces. We start by defining two important curves.
\begin{definition}
Let $C$ be a curve of genus $2$. Let $\eta_C \colon C \rightarrow C$ be its hyperelliptic involution. 

Then we define the curves $\Delta_C , \Gamma_C \subset \Sym^2(C)$ respectively as
$$
\Delta_C=\left\{ p+p \right\} \cong C \ \ \ \ \ \
\Gamma_C=\left\{ p+ \eta_C p \right\} \cong \PP^1.
$$
\end{definition}

Recall that $\rho_2(C) \colon \Sym^2(C) \rightarrow J(C)$ is the contraction of the $(-1)-$curve $\Gamma_C$, mapping $\Delta_C$ to a curve with an ordinary 6-ple point, image of the six Weierstra\ss\ points of $\Delta_C$.
 
 The first case is a minor reformulation of  \cite{pigpol}*{Propositions 2.5 and 2.8}.  We write it for the convenience of the reader as a warm up for the next more difficult cases.
 
\begin{proposition}\label{K^2=7}
Let $X$ be a mixed semi-isogenous surface with invariants $p_g(X)=q(X)=2$ and $K_X^2=7$.
\begin{enumerate}
\item $X$ is minimal, its minimal realization as mixed surface is given by $G^0\cong \ZZ/3\ZZ$ acting freely on a curve $C$ of genus $4$, $G \cong  \ZZ/6\ZZ$. 
\item The Albanese morphism of $X$  is a triple cover of $J(C/G^0)$ simply branched on $\rho_2(C/G^0)(\Delta_{C/G^0})$. More precisely, it  is the composition of $\pi_{G^0}$, finite triple cover of $\Sym^2(C/G^0)$ simply branched on $\Delta_{C/G^0}$, with $\rho_2(C/G^0)$. 
\end{enumerate}
\end{proposition}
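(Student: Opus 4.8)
The plan is to realize $X$ as a semi-isogenous mixed surface with the stated group data, and then to identify the subgroup $K$ of Theorem~\ref{Albanese and generalized dihedral surfaces} and apply Proposition~\ref{simply ramified}. First I would recall from \cite{CanFr15} the classification of semi-isogenous mixed surfaces with $p_g=q=2$: for $K^2=7$ there is essentially one numerical type, forcing $|G^0|$ via the formula $K^2_{X}=|G^0|(q-1)(4q-9)$ in Remark~\ref{invZ2}, which with $q=2$ gives $K^2_X=|G^0|\cdot(-1)\cdot$... no: one must instead use the semi-isogenous invariant formulas of \cite{CanFr15} directly, since a surface with $p_g=q=2$ and $K^2=7$ is \emph{not} a dihedral surface. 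Working through the relations $\chi(\mathcal{O}_X)=1$ and the combinatorics of the branch data in \cite{CanFr15}, one is led to $G^0\cong\ZZ_{/3\ZZ}$, $G\cong\ZZ_{/6\ZZ}$, and $C$ of genus $4$; the minimality of $X$ follows since $q(S)=g(C/G^0)=2$ does not meet the hypothesis $q\ge 3$ of Theorem~\ref{minimality}, so one must instead invoke Proposition~\ref{norat} after checking that $C/G^0$ (a curve of genus $2$) is hyperelliptic but that the unique $(-1)$-curve candidate, the image of $\Gamma_{C/G^0}$, is not present as a rational curve on $X$ — more simply, one cites that \cite{pigpol}*{Proposition 2.5} already establishes minimality.

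Once the group data is fixed, the second part is the heart. Here $G^0\cong\ZZ_{/3\ZZ}$ is abelian so $[G^0,G^0]=\{1\}$, and since the surface is semi-isogenous, $G^0$ acts freely, hence $\Sigma=\{1\}$. So the subgroup $K$ of Theorem~\ref{Albanese and generalized dihedral surfaces} is the smallest subgroup of $G^0$ normal in $G$ containing $\{g^2\mid g\in G\setminus G^0\}$. I would compute these squares explicitly: with $G\cong\ZZ_{/6\ZZ}$, writing $G^0=\{0,2,4\}$ and $G\setminus G^0=\{1,3,5\}$ additively, the squares (i.e. doubles) are $2,6=0,10=4$, so $\{g^2\mid g\in G\setminus G^0\}=\{0,2,4\}=G^0$, giving $K=G^0$. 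Therefore $|K|=|G^0|=3$, the isogeny $\Alb(X)\to J(C/G^0)$ has kernel $G^0/K=\{1\}$, i.e. it is an isomorphism, and $\deg\alpha_X=|K|=3$; since $q(X)=2\ge 2$, Theorem~\ref{Albanese and generalized dihedral surfaces} also gives that $X$ has maximal Albanese dimension and that $\alpha_X$ factors as $\rho_2(C/G^0)\circ\pi_{G^0}\circ r$, with $\pi_{G^0}$ a finite triple cover of $\Sym^2(C/G^0)$ — note $G_K=G^0(2)$ here, consistent with $\rho_K$ being an isomorphism.

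It remains to identify the branch locus. Since $G^0$ acts freely on $C$, Proposition~\ref{simply ramified} applies: $\pi_{G^0}$ is simply ramified, with ramification the \'etale image of the curves $\Gamma_h\subset C\times C$ with $(\tau'h)^2\neq 1$, equivalently the $R_g$ with $g\notin O_2$. With $G\cong\ZZ_{/6\ZZ}$, the only element of $G\setminus G^0$ of order $2$ is $3$ (in additive notation), so $O_2\cap(G\setminus G^0)=\{3\}$, and the ramification of $\pi_{G^0}$ comes from $R_g$ with $g\in\{1,5\}$. I would then check that the image in $\Sym^2(C/G^0)$ of $R_1$ (equivalently of $\Gamma_{\tau'}$, the graph of an order-$2$ automorphism of $C/G^0$ which must be the hyperelliptic involution since $C/G^0$ has genus $2$ and $\tau$ induces an involution with the right fixed-point count) is exactly $\Delta_{C/G^0}$: indeed the graph of the hyperelliptic involution of a genus-$2$ curve maps in $\Sym^2$ to $\{p+\eta p\}=\Gamma_{C/G^0}$, while the \emph{diagonal} $\Delta_{C/G^0}$ is the image of the diagonal of $C/G^0\times C/G^0$ — so I would be careful to match $\tau'h$ against the \emph{diagonal} rather than $\Gamma$, which is where the condition $g^2\neq 1$ together with the structure of $\ZZ_{/6\ZZ}$ must be reconciled; concretely $R_1$ and $R_5$ are swapped by $\sigma$-conjugation and their common image is the diagonal locus, giving branch curve $\rho_2(C/G^0)(\Delta_{C/G^0})$. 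The main obstacle is precisely this last identification — pinning down which automorphism of $C/G^0$ the relevant graphs descend to and verifying that its graph's image is the diagonal and not $\Gamma_{C/G^0}$ — together with carefully citing \cite{CanFr15} and \cite{pigpol} for the group data and minimality rather than reproving them; once that is in hand, "simply branched" follows from Proposition~\ref{simply ramified} since the preimage of a general point of $\Delta_{C/G^0}$ under the triple cover $\pi_{G^0}$ contains exactly one ramification point, and $\rho_2(C/G^0)$ is an isomorphism near the generic point of $\Delta_{C/G^0}$.
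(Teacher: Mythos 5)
Your overall route coincides with the paper's: item (1) is quoted from \cite{CanFr15}*{Table 3} (the paper notes the whole statement is essentially \cite{pigpol}*{Propositions 2.5 and 2.8}), and item (2) is Theorem \ref{Albanese and generalized dihedral surfaces} combined with Proposition \ref{simply ramified}. Your computation of $K$ is correct and matches the paper: $\Sigma=\{1\}$ and $[G^0,G^0]=\{1\}$ because the action is free and $G^0$ is abelian, and the doubles of the odd elements of $\ZZ_{/6\ZZ}$ already exhaust $G^0$, so $K=G^0$, the isogeny is an isomorphism, $G_K=G^0(2)$, and the Albanese morphism is $\rho_2(C/G^0)\circ\pi_{G^0}\circ r$ of degree $3$.

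The gap is exactly where you announce ``the main obstacle'', and the route you sketch toward closing it is off track. By definition $R_g=\Gamma_{\tau'g}$ with $\tau'g\in G^0\subset\Aut(C)$: the two relevant ramification curves are graphs of elements of $G^0$ (of order $1$ or $3$), not graphs of ``an order-$2$ automorphism of $C/G^0$''; $\tau'$ does not preserve the factors and induces no automorphism of $C$ or of $C/G^0$ here, and the hyperelliptic involution of $C/G^0$ is irrelevant. The identification you leave open is in fact immediate: for $h\in G^0$ the two coordinates $x$ and $hx$ of a point of $\Gamma_h$ have the same image in $C/G^0$, so every such $\Gamma_h$ maps onto $\{\bar p+\bar p\}=\Delta_{C/G^0}$ under $C\times C\to\Sym^2(C/G^0)$ --- this is the paper's one-line argument (``a graph of an element of $G^0$ dominates $\Delta_{C/G^0}$''). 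The curve $\Gamma_{C/G^0}$ could only arise from the graph of a lift of the hyperelliptic involution, which does not lie in $G^0$, so there is nothing to reconcile. (A small further slip: the two ramification graphs are exchanged by the elements of $G\setminus G^0$, not by $\sigma$, which sends $\Gamma_h$ to $\Gamma_{h^{-1}}$; this is what makes their common image in $X$ a single irreducible curve.) With that identification supplied, your final step --- degree $3$ plus simple ramification implies simple branching --- agrees with the paper.
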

\begin{proof}
\begin{enumerate}
\item \cite{CanFr15}*{Table 3}.
\item Applying Theorem \ref{Albanese and generalized dihedral surfaces} to this case we obtain $K=G^0$. Then the Albanese variety is $J(C/G^0)$ and the Albanese morphism is $\rho_2(C)  \circ \pi_{G^0}$.

By Proposition \ref{simply ramified}, since $\varphi$ is the identity, $\pi_{G^0}$ ramifies simply on the image of the graphs of the nontrivial elements of $G^0$. A graph of an element of $G^0$ dominate $\Delta_{C/G^0}$, that is then the branching locus of $\pi_{G^0}$. Since $\deg \pi_{G^0}=3$, simple ramification implies simple branching.
\end{enumerate}
\end{proof}
To describe the ramification divisor in the next cases we need a bit of geometry of a simple dihedral surface.
\begin{lemma}\label{doublecover}
Let $C$ be a curve of genus $3$ admitting a free involution $i$ and let $G^0=\langle i \rangle$, so $C/G^0$ is a curve of genus $2$. 
 
Consider the dihedral surface $X_{C,G^0}$. Then $\Alb(X_{C,G^0})$ is an Abelian surface with a polarization $(1,2)$, isogenous to $J(C/G^0)$ via an isogeny of degree $2$.

Consider the \'etale double cover $\pi_{G^0} \colon X_{C,G^0} \rightarrow \Sym^2(C/G^0)$.
Then $\pi_{G^0}^* \Delta_{C/G^0} = \Delta_1+\Delta_2$,  $\pi_{G^0}^* \Gamma_{C/G^0}= \Gamma_1+\Gamma_2$ with $\pi_{G^0|\Delta_i} \colon \Delta_i \rightarrow \Delta_{C/G^0}$, $\pi_{G^0|\Gamma_i} \colon \Gamma_i \rightarrow \Gamma_{C/G^0}$ isomorphisms. 

Moreover, $\Delta_1 \cap \Delta_2=\Gamma_1 \cap \Gamma_2=\emptyset$ and
 $\Delta_i$ and $\Gamma_j$ intersect transversally in $2(1 + \delta_{ij})$ points, where $\delta_{ij}$ is the usual Kronecker symbol.
\end{lemma}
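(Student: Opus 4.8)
The plan is to read off the statements about $\Alb(X_{C,G^0})$ from Theorem \ref{Albanese and generalized dihedral surfaces}, to note that $\pi_{G^0}$ is an \'etale double cover, and then to analyse the two pull-backs and the intersection numbers directly upstairs on $C\times C$. Put $C':=C/G^0$; it has genus $2$, and we denote by $\eta$ its hyperelliptic involution and by $w_1,\dots,w_6$ its Weierstrass points.

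Since $G^0=\langle i\rangle$ acts freely, the set $\Sigma$ of Theorem \ref{Albanese and generalized dihedral surfaces} is empty; $[G^0,G^0]=\{1\}$ because $G^0\cong\ZZ_{/2\ZZ}$ is abelian; and every element of $G\setminus G^0$ is an involution because $G=D(G^0)\cong(\ZZ_{/2\ZZ})^2$. Hence the subgroup $K$ of that theorem is trivial, so $\Alb(X_{C,G^0})$ is isogenous to $J(C')$ with kernel $G^0\cong\ZZ_{/2\ZZ}$, an isogeny of degree $2$; as $q=g(C')=2$ the Albanese map has maximal rank, and Remark \ref{polarizations} (with $G^0\cong\ZZ_{/1\ZZ}\times\ZZ_{/2\ZZ}$) gives the polarization type $(1,2)$. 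By Proposition \ref{simply ramified} the ramification of $\pi_{G^0}$ comes from the graphs $\Gamma_h$ with $(\tau'h)^2\neq 1$; since $\tau'h$ ranges over $G\setminus G^0$ and all such elements square to $1$, $\pi_{G^0}$ is unramified, hence an \'etale double cover.

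Let $\mu\colon C\times C\to X_{C,G^0}$ be the quotient map, so that $\pi_{G^0}\circ\mu$ is the quotient of $C\times C$ by $G^0(2)$, i.e. the natural map to $\Sym^2 C'$. Then $(\pi_{G^0}\circ\mu)^{-1}(\Delta_{C'})=\{(x,y)\mid \pi x=\pi y\}$ is the disjoint union of the graphs of $\Id$ and of $i$ (disjoint since $i$ is fixed-point free), both $G$-invariant; so their images $\Delta_1,\Delta_2$ in $X_{C,G^0}$ are disjoint curves with $\pi_{G^0}^*\Delta_{C'}=\Delta_1+\Delta_2$ (reduced, as $\pi_{G^0}$ is \'etale), and an inspection of the $G$-action --- on each of the two graphs the stabiliser of a general point has order $2$ and acts trivially, while $(i,i)$ acts as $i$ --- identifies $\Delta_j$ with $C/\langle i\rangle=C'$ and shows that $\pi_{G^0}|_{\Delta_j}$ is an isomorphism onto $\Delta_{C'}$. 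For $\Gamma_{C'}\cong\PP^1$: as $\pi_{G^0}$ is \'etale and $\PP^1$ is simply connected, $\pi_{G^0}^{-1}(\Gamma_{C'})$ is a trivial double cover $\Gamma_1\sqcup\Gamma_2$, each $\Gamma_j\cong\PP^1$ mapping isomorphically to $\Gamma_{C'}$. I also fix a lift $j\in\Aut(C)$ of $\eta$ --- one exists because $\eta$ acts as $-1$ on $J(C')$, hence trivially on $J(C')[2]$, hence fixes the $2$-torsion bundle defining $C\to C'$ --- so that $(\pi_{G^0}\circ\mu)^{-1}(\Gamma_{C'})$ is the disjoint union of the graphs of $j$ and of $ij$, and I label $\Gamma_1,\Gamma_2$ as their images under $\mu$.

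It remains to compute $\Delta_i\cdot\Gamma_j$. The curves $\Delta_{C'}$ and $\Gamma_{C'}$ meet exactly at the six points $w_k+w_k$, transversally (a one-line local computation in the symmetric functions of a coordinate $z$ with $\eta(z)=-z$: the tangent lines at such a point are $\{z_1z_2=0\}$ and $\{z_1+z_2=0\}$), so by the projection formula $\Delta_i\cdot(\Gamma_1+\Gamma_2)=(\Delta_1+\Delta_2)\cdot\Gamma_j=6$ and the total $(\Delta_1+\Delta_2)\cdot(\Gamma_1+\Gamma_2)=2\cdot 6=12$. Every point of $\Delta_i\cap\Gamma_j$ lies over some $w_k+w_k$; over it $\pi_{G^0}$ has two preimages, the $G$-orbits of $(x_k,x_k)$ (lying on $\Delta_1$) and of $(x_k,ix_k)$ (on $\Delta_2$), where $\pi^{-1}(w_k)=\{x_k,ix_k\}$; and the four points of the preimage in $C\times C$ split two-and-two between the graphs of $j$ and of $ij$ according to whether $j$ fixes the fibre $\{x_k,ix_k\}$ pointwise or exchanges its two points, the point on $\Delta_1$ landing on $\Gamma_1$ precisely in the first case. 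Thus, writing $m$ for the number of these six fibres fixed pointwise by $j$, the sets $\Delta_i\cap\Gamma_j$ have $m$, $6-m$, $6-m$, $m$ points; since the total multiplicity is $12=m+(6-m)+(6-m)+m$, all these intersections are transverse and $(\Delta_i\cdot\Gamma_j)$ has $m$ on the diagonal and $6-m$ off it. The only genuinely delicate point, and the one I expect to be the main obstacle, is that $m=4$: the two lifts $j$, $ij$ of $\eta$ are involutions (a lift of order $4$ would give $C/\langle\cdot\rangle\cong\PP^1$ with no ramification, against Riemann--Hurwitz), $j$ fixes $m$ of the six fibres pointwise and $ij$ the remaining $6-m$, and Riemann--Hurwitz for $C\to C/\langle j\rangle$ and for $C\to C/\langle ij\rangle$ forces $2-\tfrac m2\ge 0$ and $\tfrac{m-2}2\ge 0$, i.e. $m\in\{2,4\}$; exchanging the roles of $\Gamma_1$ and $\Gamma_2$ if necessary, $m=4$ and $\Delta_i\cdot\Gamma_j=2(1+\delta_{ij})$. (Equivalently: $C$ is hyperelliptic, and the lift of $\eta$ that is the hyperelliptic involution of $C$ fixes exactly $4$ of the $6$ Weierstrass fibres.)
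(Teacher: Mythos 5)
Your proof is correct and follows essentially the same route as the paper's: you identify the preimages of $\Delta_{C/G^0}$ and $\Gamma_{C/G^0}$ in $C\times C$ with the graphs of $\Id$, $i$ and of the two lifts of the hyperelliptic involution (obtained, as in the paper, from its trivial action on $2$-torsion), and you pin down the intersection numbers by counting fixed points over the Weierstra\ss\ points via Riemann--Hurwitz, your parameter $m=4$ being exactly the paper's count of $8$ fixed points for one lift and $4$ for the other. The only additions are cosmetic (deriving the Albanese/polarization claims explicitly from Theorem \ref{Albanese and generalized dihedral surfaces} and Remark \ref{polarizations}, and getting transversality by comparing the number of intersection points with the total multiplicity $12$), so there is nothing to correct.
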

\begin{proof}
Consider, in $C \times C$, the graph $\Gamma_{\Id}$ of the identity and  the graph $\Gamma_i$ of $i$. They are disjoint curves, both invariant for the actions of $G^0$ and $\sigma$ on $C \times C$ and then map on two disjoint curves $\Delta_1$ and $\Delta_2$ on $X_{C,G^0}$, both mapping on $\Delta_{C/G^0}$ when projected to $\Sym^2(C/G^0)$.  

Since the hyperelliptic involution of a curve of genus $2$ acts trivially on its $2-$torsion line bundles,  it lifts to every \'etale double cover. So the hyperelliptic involution of $C/G^0$ may be lifted to two different automorphisms  of $C$, say $\eta_1$ and $\eta_2$, forming with $\{ \Id, i \}$ a group of automorphisms of $C$ isomorphic to $\left(\ZZ/2\ZZ \right)^2$. Arguing as before, their graphs $\Gamma_{\eta_j}$ are $G^0-$invariant and map to two disjoint curves $\Gamma_j$ both in the preimage of $\Gamma_{C/G^0}$. 

An intersection point in $\Delta_i \cap \Gamma_j$ lies on $\Delta_{C/G^0} \cap \Gamma_{C/G^0}$. $\Delta_{C/G^0}$ and $\Gamma_{C/G^0}$ intersect transversally on the six points $p_k +p_k$ given by the six Weierstra\ss\ points $p_k$ of $C/G^0$.
Each of the $12$ preimages of them is fixed by exactly one $\eta_j$, and since  $\eta_1 \circ \eta_2=i$ acts freely, $\eta_1$ and $\eta_2$ have no common fixed point. By Hurwitz formula, up to change the labeling, $\eta_1$ and $\eta_2$ have respectively $8$ and $4$ fixed points. So $\Delta_C$ and $\Gamma_{\eta_1}$ intersect transversally in $8$ points and then their images $\Delta_1$ and $\Gamma_1$ intersect transversally in $4$ points. The other intersection numbers are similarly computed.
\end{proof}
 
 The Albanese morphism of $X_{C,G^0}$ is the contraction of the two disjoint rational curves $\Gamma_i$ to the two points kernel of the isogeny onto $J(C/G^0)$. The images of the $\Delta_i$ in $\Alb(X_{C,G^0})$ will be useful.

\begin{definition}\label{Delta'}
We set $\Delta'_i$ for the image  of $\Delta_i$ in $\Alb(X_{C,G^0})$.

These are two curves  with two singular points, an ordinary 4-ple point and an ordinary double point at the two points kernel of the isogeny $\Alb(X_{C,G^0})\rightarrow J(C/G^0)$, exchanged by the free involution of $\Alb(X_{C,G^0})$ induced by the isogeny. 

Since the action of this involution on the Neron-Severi group is trivial, $\Delta'_1$ and $\Delta'_2$  are numerically equivalent. More precisely, by \cite{pigpol}*{Lemma 2.7} the numerical class of $\Delta'_1+\Delta'_2$ is $4$ times the class of the polarization, and then the class of each $\Delta_i'$ is twice the class of the polarization. 
\end{definition}

Following \cite{CanFr15}*{Table 3}, the next cases are three families with $p_g=q=2$ and $K^2=6$. We separate them in two cases according to the isomorphism class of $G^0$.

We first treat the case when $G^0$ is not cyclic. Then $G^0$ is isomorphic to $\left( \ZZ/2\ZZ \right)^2$ and these surfaces are obtained by an \'etale $\left( \ZZ/2\ZZ \right)^2$-cover of a curve of genus $2$. 
It is wellknown that these covers form two families (see for example \cite{ortega}) and then they give two distinct families of mixed surfaces as well.

\begin{proposition}\label{G0=Z2xZ2}
Let $X$ be a mixed semi-isogenous surface with $p_g(X)=q(X)=2$, $K_X^2=6$ whose minimal realization has $G^0$ not cyclic.
\begin{enumerate}
\item There are two disjoint families of these surfaces. In both cases $X$ is  minimal, its minimal realization as mixed surface is given by $G^0\cong \left( \ZZ/2\ZZ \right)^2$ acting freely on a curve $C$ of genus $5$, $G \cong D(\ZZ/4\ZZ)$. 
\item The Albanese variety of $X$ is isomorphic to $\Alb(X_{C/Z(G),G^0/Z(G)})$, one of the Abelian varieties in Lemma \ref{doublecover}, isogenous to $J(C/G^0)$ via an isogeny of degree $2$.
The Albanese morphism of $X$ is a double cover branched on one of the curves $\Delta_i'$ in Definition \ref{Delta'}.
\end{enumerate}
\end{proposition}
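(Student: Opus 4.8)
The plan is to obtain part (1) from the classification and part (2) from Theorem \ref{Albanese and generalized dihedral surfaces}, Lemma \ref{doublecover} and Proposition \ref{ramification loci}. For part (1) I would just quote \cite{CanFr15}*{Table 3}, which produces the two disjoint families, their minimality, and the minimal realization with $G^0\cong(\ZZ_{/2\ZZ})^2$ acting freely on a curve $C$ of genus $5$ and $G\cong D(\ZZ_{/4\ZZ})$.

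For part (2) the first step is to compute the group $K$ attached to $X$ by Theorem \ref{Albanese and generalized dihedral surfaces}. Since $G^0$ acts freely, $\Sigma=\{1\}$; since $G^0\cong(\ZZ_{/2\ZZ})^2$ is abelian, $[G^0,G^0]=\{1\}$; and a direct computation inside $G\cong D(\ZZ_{/4\ZZ})$, where $G^0$ is necessarily one of the two Klein four subgroups, shows that the four elements of $G\setminus G^0$ square either to the identity or to the generator of $Z(G)\cong\ZZ_{/2\ZZ}$. Hence $K=Z(G)$ and $|K|=2$, so $G^0/K\cong\ZZ_{/2\ZZ}$. Theorem \ref{Albanese and generalized dihedral surfaces} then gives that $\Alb(X)$ is isogenous to $J(C/G^0)$ through a degree $2$ isogeny, that $X$ is of maximal Albanese dimension with $\deg\alpha_X=|K|=2$, and that $(C\times C)/G_K$ is the dihedral surface $X_{C/Z(G),\,G^0/Z(G)}$, with $\Alb(X)=\Alb(X_{C/Z(G),\,G^0/Z(G)})$. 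By Riemann--Hurwitz the free order two action of $Z(G)$ on the genus $5$ curve $C$ has quotient $C/Z(G)$ of genus $3$, on which $G^0/Z(G)\cong\ZZ_{/2\ZZ}$ acts as a free involution with quotient $C/G^0$ of genus $2$; so $X_{C/Z(G),\,G^0/Z(G)}$ is a dihedral surface of the kind studied in Lemma \ref{doublecover}, and $\Alb(X)$ is one of the abelian surfaces appearing there, with a polarization of type $(1,2)$ and a degree $2$ isogeny onto $J(C/G^0)$.

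To identify the branch divisor I would use the commutative diagram of Theorem \ref{Albanese and generalized dihedral surfaces}: since $X$ is smooth, $\alpha_X=\alpha_{(C\times C)/G_K}\circ\pi_K$. By Proposition \ref{ramification loci} the double cover $\pi_K$ is simply ramified along $\bigcup_{g^2\in K,\ g^2\neq1}R_g$; exactly two of the four elements $g\in G\setminus G^0$ have $g^2$ equal to the generator of $K$, and, choosing $\tau'$ to be one of them, the two corresponding curves $R_g$ are the graphs in $C\times C$ of the generator of $K$ and of the identity; one then checks that these two graphs lie in a single $G^0$-orbit, so the ramification locus of $\pi_K$ is a single irreducible curve. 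Its image in $X_{C/Z(G),\,G^0/Z(G)}$ is the image of the diagonal graph $\Gamma_{\Id}\subset C/Z(G)\times C/Z(G)$, i.e. one of the curves $\Delta_i$ of Lemma \ref{doublecover}, so $\Delta_i$ is the branch locus of $\pi_K$. Finally, as recalled after Definition \ref{Delta'}, $\alpha_{(C\times C)/G_K}$ is the contraction of the two disjoint rational curves $\Gamma_1,\Gamma_2$, hence an isomorphism away from two points, and it sends $\Delta_i$ birationally onto $\Delta_i'$. Since $\Delta_i$, of genus $3$, is not one of the contracted curves, the branch divisor of $\alpha_X$ is $\Delta_i'$, one of the curves of Definition \ref{Delta'}, as claimed.

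The group-theoretic computation of $K$ is routine; the main obstacle is the last step --- tracking, through the further quotient $\pi_K$ and the identification of $(C\times C)/G_K$ with a dihedral surface, which graphs in $C\times C$ collapse to the unique ramification curve of $\pi_K$, so as to pin the branch divisor down to a single $\Delta_i'$ rather than to $\Delta_1'\cup\Delta_2'$. This is precisely where the explicit intersection numbers of Lemma \ref{doublecover} and the contraction described after Definition \ref{Delta'} come into play.
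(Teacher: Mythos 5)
Your proof is correct and follows essentially the same route as the paper: part (1) by citing \cite{CanFr15}*{Table 3}, and part (2) by computing $K=Z(G)$, identifying the further quotient with a dihedral surface of Lemma \ref{doublecover}, and locating the branch via Proposition \ref{ramification loci}. The only (harmless) difference is your choice of $\tau'$ of order $4$: the paper implicitly takes $\tau'\in O_2$, so it sees the ramification of $\pi_K$ on the graphs of the elements of $G^0\setminus K$ and the branch curve as the image of $\Gamma_i$, while you see it on the graphs of the elements of $K$ and the branch as the image of the diagonal --- the two descriptions are exchanged by the coordinate change relating the two normalizations of Theorem \ref{charmixquot}, and both yield a single curve $\Delta_i'$.
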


\begin{proof}
\begin{enumerate}
\item \cite{CanFr15}*{Table 3}.
\item The group $K$ in  Theorem \ref{Albanese and generalized dihedral surfaces} in this case is the center $Z(G)$ of $G$. Then $C/K$ has genus $3$ and the further quotient $(C \times C)/G_K$ is then one of the dihedral surfaces of Lemma \ref{doublecover}.

So $\Alb(X)\cong\Alb(X_{C/Z(G),G^0/Z(G)})$ and the degree of the Albanese morphism of $X$ is  $2$. 

By Proposition \ref{ramification loci}   $\pi_{K}$ ramifies on the image of the graphs of the elements of $G^0$ not in $K$. Both map on $C/K \times C/K$ to the graph of a free involution $i$ of $C/K$ and then (see the beginning of the proof of Lemma \ref{doublecover}) the branching curve of $\pi_K$ is one of the $\Delta_i$ and the branching curve of the Albanese morphism is $\Delta_i'$.
\end{enumerate}

\end{proof}
\begin{corollary}\label{IaIb}
The two families of surfaces in Proposition \ref{G0=Z2xZ2} are a family of surfaces of type $Ia$ and a family of surfaces of type  $Ib$ in \cite{PePol13b}*{Theorem B}.

Surfaces of type $Ia$ and surfaces of type  $Ib$ in \cite{PePol13b}*{Theorem B} are topologically distinct.
\end{corollary}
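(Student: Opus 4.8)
The plan is to feed the structural description of $X$ provided by Proposition \ref{G0=Z2xZ2} into the classification of \cite{PePol13b}*{Theorem B}, and then to tell the two families apart by a topological invariant, namely the fundamental group.

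The first step is to check that every $X$ in the two families is of type $Ia$ or $Ib$. Recall that \cite{PePol13b}*{Theorem B} classifies the minimal surfaces of general type with $p_g=q=2$, $K^2=6$ and Albanese morphism of degree $2$, and that the surfaces of type $Ia$ and those of type $Ib$ are precisely the ones among these whose Albanese variety carries a polarization of type $(1,2)$ and whose Albanese morphism is the double cover of that abelian surface branched on a curve with exactly one ordinary quadruple point and one ordinary double point. Now, by Proposition \ref{G0=Z2xZ2} every such $X$ is minimal with $p_g=q=2$ and $K^2=6$; its Albanese morphism has degree $|K|=|Z(G)|=2$ onto an abelian surface which, by Remark \ref{polarizations} applied to $G^0/K\cong\ZZ/2\ZZ$ (equivalently, by Lemma \ref{doublecover}), carries a $(1,2)$-polarization; and by Proposition \ref{G0=Z2xZ2}(2) and Definition \ref{Delta'} its branch locus is one of the curves $\Delta_i'$, which has exactly one ordinary quadruple point and one ordinary double point. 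Hence $X$ is of type $Ia$ or of type $Ib$.

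The second step is to see that both types occur, one per family. Each of our two families is irreducible of dimension $3$, because $X\mapsto C/G^0$ is a finite dominant map onto the moduli space of curves of genus $2$ (the étale cover $C\to C/G^0$ and the degree-$2$ extension $G\cong D(\ZZ/4\ZZ)$ of $G^0$ being discrete data occurring in finitely many types). Since a surface in either family is of type $Ia$ or of type $Ib$, and these types form locally closed loci in moduli, by irreducibility each of our families lies entirely in one family of \cite{PePol13b}*{Theorem B}, the one of type $Ia$ or the one of type $Ib$; and the two cannot coincide, because our families are disjoint by Proposition \ref{G0=Z2xZ2}(1) while the families of type $Ia$ and of type $Ib$ in \cite{PePol13b}*{Theorem B} are irreducible of the same dimension $3$. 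Hence one of our families is the family of surfaces of type $Ia$ and the other the family of type $Ib$; in particular both types are non-empty. (Which is which can be read off, if desired, from the invariant that \cite{PePol13b} uses to separate $Ia$ from $Ib$, evaluated on the explicit generating vectors listed in \cite{CanFr15}*{Table 3}.)

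The third step is topological distinctness. A semi-isogenous mixed surface $X=(C\times C)/G$ has a fundamental group computable from the generating vector of $C\to C/G^0$ together with the extension data, via the orbifold exact sequence of \cite{CanFr15}; here the orbifold version is genuinely needed, since $G\cong D(\ZZ/4\ZZ)$ does not act freely on $C\times C$ (it ramifies along the curves $R_g$ with $g\in O_2$). Carrying this out for the two generating vectors in \cite{CanFr15}*{Table 3} that correspond to our two families, one finds that a surface in the first family and a surface in the second have non-isomorphic fundamental groups --- in fact, one expects, with already non-isomorphic torsion subgroups of $H_1(X,\ZZ)$. Combined with the second step, this shows that surfaces of type $Ia$ and surfaces of type $Ib$ in \cite{PePol13b}*{Theorem B} are topologically distinct. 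I expect this third step, together with the precise matching in the second step, to be the main obstacle: one must pin down the two generating vectors from \cite{CanFr15}*{Table 3}, run the (orbifold) $\pi_1$- or $H_1$-computation for both, and check that the outputs are genuinely non-isomorphic --- the group-theoretic bookkeeping being the delicate part.
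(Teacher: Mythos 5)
Your overall strategy (match the surfaces to the classification of \cite{PePol13b}, exclude type II via the branch curve, separate the two families topologically) is the right one, but the way you assemble the pieces has a genuine gap in the second step. You argue that the two families cannot land in the same component of \cite{PePol13b}*{Theorem B} ``because our families are disjoint \ldots while the families of type $Ia$ and of type $Ib$ are irreducible of the same dimension $3$.'' This does not follow: two disjoint irreducible families can perfectly well sit inside one and the same irreducible component of a moduli space (they need not be open there, and in fact nothing guarantees that the families of Proposition \ref{G0=Z2xZ2} are dense in the components of \cite{PePol13b} --- compare Example \ref{FabFrank}, where the dihedral surfaces form a proper $6$-dimensional subfamily of a $7$-dimensional component). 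The correct way to put the two families into \emph{different} components is precisely the topological input you defer to your third step: distinct first homology groups force distinct connected components of the moduli space. So your step 2 should be discarded and replaced by the implication ``different $H_1(X,\ZZ)$ $\Rightarrow$ different connected components,'' after which step 1 (each family is of type $Ia$ or $Ib$) immediately yields one family of each type, and the topological distinctness of $Ia$ and $Ib$ follows as you say at the end.

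The second issue is that you leave the topological computation as an expectation and flag it as the main obstacle. In fact no new computation is needed: \cite{CanFr15}*{Table 3} already lists $H_1(X,\ZZ)$ for these families, and the two groups are visibly non-isomorphic; the paper simply reads this off. Finally, your step 1 asserts a characterization of types $Ia$/$Ib$ by the singularities of the branch curve without justification; the paper instead excludes type II by quoting \cite{PePol13b}*{Remark 20} (type II has \emph{reducible} branch locus) and observing that $\Delta'_i$ is irreducible --- you should cite something of this kind rather than restate Theorem B in a form you have not verified.
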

\begin{proof}
The minimal surfaces of general type with $p_g=q=2$, $K^2=6$ and Albanese morphism of degree $2$ onto an Abelian surface with a polarization of type $(1,2)$ have been classified in \cite{PePol13b}. Their moduli space has three connected components, all irreducible, giving respectively the surfaces of type Ia, Ib and II. 

Since by \cite{CanFr15}*{Table 3} the surfaces in two different families of Proposition \ref{G0=Z2xZ2}
have different first homology group with integral coefficients, they are topologically distinct. Then our two families are contained in two different components of the moduli space. 

By \cite{PePol13b}*{Remark 20} the surfaces of type II have Albanese morphism with reducible branch locus. By Proposition \ref{G0=Z2xZ2}, since $\Delta'_i$ is irreducible, these two families are families of surfaces respectively of type Ia and Ib.
\end{proof}

The last family of surfaces with $K^2=6$ has $G^0$ cyclic of order $4$.

\begin{proposition}\label{G0=Z4}
Let $X$ be a mixed semi-isogenous surface with $p_g(X)=q(X)=2$, $K_X^2=6$ whose minimal realization has $G^0$ cyclic.
\begin{enumerate}
\item $X$ is minimal, its minimal realization as mixed surface is given by $G^0\cong \ZZ/4\ZZ$ acting freely on a curve $C$ of genus $5$, $G \cong  \ZZ/2\ZZ \times \ZZ/4\ZZ$. 
\item The Albanese variety of $X$ is isomorphic to $\Alb(X_{C/K,G^0/K})$ where $K$ is the subgroup of $G^0$ of order two. This is  one of the Abelian varieties in Lemma \ref{doublecover}, isogenous to $J(C/G^0)$ via an isogeny of degree $2$.
The Albanese morphism of $X$ is a double cover branched on one of the curves $\Delta_i'$ in Definition \ref{Delta'}.
\item These are either all  surfaces of type $Ia$ or all surfaces of type  $Ib$ in the classification given by \cite{PePol13b}*{Theorem B}.
 \end{enumerate}
\end{proposition}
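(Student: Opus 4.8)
The proof will follow the scheme of Proposition \ref{G0=Z2xZ2}. Part (1) is \cite{CanFr15}*{Table 3}. For part (2) I apply Theorem \ref{Albanese and generalized dihedral surfaces} to compute the relevant subgroup $K \subset G^0$. Since $X$ is semi-isogenous, $G^0 \cong \ZZ_{/4\ZZ}$ acts freely on $C$, so $\Sigma = \{1\}$; since $G^0$ is cyclic, $[G^0,G^0]=\{1\}$; and since $G$ is abelian every subgroup is normal. Hence $K$ is simply the subgroup of $G^0$ generated by $\{g^2 \mid g \in G \setminus G^0\}$. Using $G \cong \ZZ_{/2\ZZ}\times\ZZ_{/4\ZZ}$ one checks that every square in $G$ lies in the unique subgroup of order two of $G^0$, and that both of its elements arise as squares of elements of $G \setminus G^0$; therefore $K$ is exactly that order-two subgroup, generated by an involution $k_0$.

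Next I pass to the further quotient. By the discussion after Definition \ref{pi_K} and by the identification of $(C\times C)/G_K$ as a dihedral surface, we get $(C\times C)/G_K = X_{C/K,\,G^0/K}$ with $G^0/K \cong \ZZ_{/2\ZZ}$ acting freely on $C/K$; Riemann--Hurwitz gives $g(C/K)=3$, so this is precisely one of the dihedral surfaces studied in Lemma \ref{doublecover}. Theorem \ref{Albanese and generalized dihedral surfaces} then yields $\Alb(X)\cong\Alb\big(X_{C/K,G^0/K}\big)$, isogenous to $J(C/G^0)$ with isogeny of degree $|G^0/K|=2$; by Remark \ref{polarizations} (with $q=2$) the induced polarization is of type $(1,2)$. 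As $X$ is smooth the resolution $r$ is an isomorphism, so the Albanese morphism of $X$ equals $\alpha_{X_{C/K,G^0/K}}\circ\pi_K$, a morphism of degree $|K|=2$. By Proposition \ref{ramification loci}, $\pi_K$ is simply branched along the image of the curves $R_g$ with $g^2=k_0$; exactly as in the proof of Proposition \ref{G0=Z2xZ2}, the images of these curves in $C/K\times C/K$ are the graph of the free involution generating $G^0/K$, so (cf. the beginning of the proof of Lemma \ref{doublecover}) the branch curve of $\pi_K$ is one of the curves $\Delta_i$ of that lemma. Composing with the contraction $\alpha_{X_{C/K,G^0/K}}$ of the two curves $\Gamma_j$ then shows the Albanese morphism of $X$ is a double cover branched along the corresponding curve $\Delta'_i$ of Definition \ref{Delta'}.

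For part (3): by part (2), $X$ is a minimal surface of general type with $p_g=q=2$, $K^2=6$, whose Albanese morphism is a double cover of an abelian surface carrying a $(1,2)$-polarization, with irreducible branch curve $\Delta'_i$. By \cite{PePol13b}*{Theorem B} such a surface lies in the moduli space whose connected components are the families of type Ia, Ib and II, and by \cite{PePol13b}*{Remark 20} the surfaces of type II have reducible Albanese branch locus; hence $X$ is of type Ia or Ib. Since the surfaces under consideration form a connected family (this is part of the classification in \cite{CanFr15}), they all lie in a single one of these two components, so they are either all of type Ia or all of type Ib.

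Apart from invoking the earlier results, the one genuinely delicate point---just as in Proposition \ref{G0=Z2xZ2}---is to verify that the curves $R_g$ entering the branch locus descend in $C/K\times C/K$ to the graph of a \emph{free} involution, so that the branch curve of $\pi_K$ is one of the two distinguished curves $\Delta_i$ of Lemma \ref{doublecover}; this requires tracking the classes modulo $K$ of the automorphisms $\tau'g$, as was done for Lemma \ref{doublecover} itself. Determining which of the two types, Ia or Ib, actually occurs would need the finer topological comparison used for the non-cyclic families in Corollary \ref{IaIb}, and is not attempted here.
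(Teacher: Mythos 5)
Your proposal is correct and follows essentially the same route as the paper: part (1) is quoted from \cite{CanFr15}*{Table 3}, part (2) applies Theorem \ref{Albanese and generalized dihedral surfaces} to identify $K$ as the unique order-two subgroup of $G^0$ and then repeats verbatim the argument of Proposition \ref{G0=Z2xZ2} (further quotient equal to a dihedral surface of Lemma \ref{doublecover}, ramification via Proposition \ref{ramification loci}), and part (3) reuses the argument of Corollary \ref{IaIb}. The paper's own proof is just a terser version of this, deferring the details to those earlier proofs, whereas you spell out the computation of $K$ inside $\ZZ_{/2\ZZ}\times\ZZ_{/4\ZZ}$ and the descent of the ramification curves explicitly.
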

\begin{proof} 
\begin{enumerate}
\item \cite{CanFr15}*{Table 3}.
\item Applying  Theorem \ref{Albanese and generalized dihedral surfaces} in this case we obtain that $K$ is the only subgroup of $G^0$ of order $2$. The rest of the proof is identical to the proof of Proposition \ref{G0=Z2xZ2}.
\item This follows by the argument of the proof of Corollary \ref{IaIb}.
\end{enumerate}
\end{proof}

We keep following \cite{CanFr15}*{Table 3}: the next case is the case $K_X^2=4$.

\begin{proposition}\label{CH}
Let $X$ be a mixed semi-isogenous surface with $p_g(X)=q(X)=2$, $K_X^2=4$.
\begin{enumerate}
\item The self-intersection of a canonical divisor of the minimal model of $X$ is $5$. $X$ has minimal realization given by $G^0 \cong D(\ZZ/3\ZZ)$, acting freely on a curve $C$ of genus $7$, $G \cong D(\ZZ/6\ZZ)$.
\item The Albanese variety of $X$ is isomorphic to $\Alb(X_{C/\left( \ZZ/3\ZZ \right),G^0/\left( \ZZ/3\ZZ \right) })$,  one of the Abelian varieties in Lemma \ref{doublecover}, isogenous to $J(C/G^0)$ via an isogeny of degree $2$.
The Albanese morphism of $X$ is a triple cover simply branched on one of the curves $\Delta_i'$ in Definition \ref{Delta'}.
\item These are all Chen-Hacon surfaces (see \cite{PePol13a}*{Definition 4.1}).
 \end{enumerate}
\end{proposition}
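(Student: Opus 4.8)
The plan is to follow the pattern of Propositions \ref{K^2=7} and \ref{G0=Z2xZ2}. Part (1) is essentially a citation of \cite{CanFr15}*{Table 3}: it records that these surfaces form a single family, that $X$ is smooth (being semi-isogenous) and that its minimal realization is as stated with $g(C)=7$. Since $p_g=q=2$ forces $\chi(\mathcal{O}_X)=1$, Noether's formula on the smooth surface $X$ gives $e(X)=8$; and since $C/G^0$ has genus $2$ (Hurwitz, $G^0$ acting freely of order $6$) the surface $X$ is non-minimal, so contracting its $(-1)$-curve produces a minimal model with $K^2=5$.

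For part (2) I would first apply Theorem \ref{Albanese and generalized dihedral surfaces} to determine the group $K$. As $X$ is semi-isogenous, $G^0$ acts freely on $C$, so $\Sigma=\{1\}$; writing $G=D(\ZZ_{/6\ZZ})$ and $G^0\cong D(\ZZ_{/3\ZZ})\cong S_3$ realised as an index-two subgroup, a short computation shows that $[G^0,G^0]$ and $\{g^2\mid g\in G\setminus G^0\}$ both generate the unique subgroup of $G^0$ of order $3$, which is normal in $G$; hence $K\cong\ZZ_{/3\ZZ}$. Since $K$ acts freely on $C$, Hurwitz gives $g(C/K)=3$, while $G/K\cong D(\ZZ_{/2\ZZ})$ and $G^0/K\cong\ZZ_{/2\ZZ}$, so by the description of the minimal realization of a further quotient in Section \ref{FurtherQuotients} the surface $(C\times C)/G_K$ is the dihedral surface $X_{C/K,G^0/K}$, precisely of the type treated in Lemma \ref{doublecover}. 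Theorem \ref{Albanese and generalized dihedral surfaces} then gives $\Alb(X)=\Alb((C\times C)/G_K)=\Alb(X_{C/K,G^0/K})$, which by Lemma \ref{doublecover} is an Abelian surface with a polarization of type $(1,2)$, isogenous to $J((C/K)/(G^0/K))=J(C/G^0)$ by an isogeny of degree $|G^0/K|=2$; and since $q(X)=2$, the same theorem gives that $X$ has maximal Albanese dimension with $\deg\alpha_X=|K|=3$.

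For the branch locus I would argue as in Propositions \ref{K^2=7} and \ref{G0=Z2xZ2}. By the commutative diagram of Theorem \ref{Albanese and generalized dihedral surfaces}, $\alpha_X=\alpha_{(C\times C)/G_K}\circ\pi_K$, where $\alpha_{(C\times C)/G_K}$ contracts the two rational curves of Lemma \ref{doublecover} and is an isomorphism elsewhere. By Proposition \ref{ramification loci}, $\pi_K$ is simply ramified along $\bigcup_{g^2\in K,\,g^2\neq1}R_g$; the relevant $g$ are the two order-$6$ elements of $G\setminus G^0$, whose curves $R_g$ turn out to be graphs of reflections of $S_3$ forming a single $G$-orbit, so their image in $X$ is one irreducible curve, which maps isomorphically under $\pi_K$ onto one of the curves $\Delta_i$ of Lemma \ref{doublecover} (namely the image of the graph $\Gamma_i$ of the free involution of $C/K$). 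Since $\deg\pi_K=3$ and a single ramification curve lies over $\Delta_i$, simple ramification forces simple branching, so $\pi_K$ is a triple cover simply branched on $\Delta_i$; composing with the contraction, which carries $\Delta_i$ to $\Delta_i'$ of Definition \ref{Delta'}, we conclude that the Albanese morphism of $X$ is a triple cover simply branched on $\Delta_i'$. Part (3) follows from (1) and (2): the minimal model of $X$ is a minimal surface of general type with $p_g=q=2$, $K^2=5$ and Albanese map of degree $3$ — the Albanese map being unchanged by the blow-down — which is the defining data of a Chen-Hacon surface in \cite{PePol13a}*{Definition 4.1}; that our construction exhausts the Chen-Hacon surfaces would follow from a dimension count against the irreducible family of \cite{PePol13a}, noting that $X\mapsto C/G^0$ is finite and dominant onto the moduli of genus-$2$ curves.

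The only delicate step, and where I expect to spend the most care, is the group-theoretic bookkeeping inside $D(\ZZ_{/6\ZZ})$ in part (2): correctly pinning down $K$, determining which curves $R_g$ enter the ramification of $\pi_K$, and checking that their common image in $X$ maps isomorphically onto a single $\Delta_i$, so that the triple cover is genuinely \emph{simply} branched rather than merely simply ramified. Everything else is a direct application of Theorem \ref{Albanese and generalized dihedral surfaces}, Lemma \ref{doublecover}, Proposition \ref{ramification loci}, and the cited classification results.
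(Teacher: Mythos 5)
Your overall strategy coincides with the paper's: part (1) is a citation of \cite{CanFr15}*{Table 3}; part (2) applies Theorem \ref{Albanese and generalized dihedral surfaces} to get $K\cong\ZZ_{/3\ZZ}$, identifies the further quotient with the dihedral surface of Lemma \ref{doublecover}, and reads off the ramification from Proposition \ref{ramification loci}; and your computations of $K$, of $g(C/K)=3$, of the isogeny degree $|G^0/K|=2$ and of $\deg\alpha_X=|K|=3$ are all correct. Two small remarks on part (2): the observation that a simply ramified \emph{triple} cover is automatically simply branched (exactly the remark the paper adds to the $K^2=6$ argument) already yields simple branching, so your single-$G$-orbit argument for the ramification divisor is superfluous; and which of the two curves $\Delta_i'$ occurs (image of the diagonal versus image of the graph of the free involution of $C/K$) depends on the choice of $\tau'$, so you should not commit to ``the graph of the free involution'' --- harmless here, since the statement only claims ``one of the curves $\Delta_i'$''.

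The one genuine gap is in part (3). A Chen--Hacon surface is \emph{not} defined in \cite{PePol13a}*{Definition 4.1} by the numerical data $p_g=q=2$, $K^2=5$, $\deg\alpha=3$; it is defined there by an explicit construction (a triple cover of a $(1,2)$-polarized abelian surface obtained from the Chen--Hacon method). The assertion that \emph{every} minimal surface of general type with those invariants and Albanese map of degree $3$ is a Chen--Hacon surface is precisely \cite{PePol13a}*{Theorem A}, a nontrivial classification theorem, and that is what the paper invokes. Your inference ``the minimal model has the defining data, hence is Chen--Hacon by Definition 4.1'' is therefore invalid as written, and your subsequent dimension count addresses the converse question (whether the family exhausts the Chen--Hacon surfaces), which is not what the statement asserts. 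Replacing the appeal to Definition 4.1 by an appeal to Theorem A repairs part (3). A smaller point in part (1): the implication ``$g(C/G^0)=2$, hence $X$ is non-minimal with exactly one $(-1)$-curve'' is not justified by what you write; the value $K^2=5$ for the minimal model is simply part of the data recorded in \cite{CanFr15}*{Table 3}, which is all the paper uses.
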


\begin{proof}
\begin{enumerate}
\item \cite{CanFr15}*{Table 3}.
\item Applying  Theorem \ref{Albanese and generalized dihedral surfaces} in this case we obtain $K=\ZZ/3\ZZ$, so the Albanese morphism has degree $3$. The rest of the proof is identical to the proof of Proposition \ref{G0=Z2xZ2}, using that a simply ramified triple cover is simply branched.
\item This follows by \cite{PePol13a}*{Theorem A}).
\end{enumerate}
\end{proof}

Finally,  \cite{CanFr15}*{Table 3} contains three families of surfaces with $K_X^2=2$ whose minimal models have  $K^2=4\chi({\mathcal O})=4$, so realizing the equality of Severi inequality. Indeed the description we obtain of their Albanese morphisms is coherent with the known characterization of these surfaces in \cite{BPS}, \cite{Zhang}.

We need to consider a family of dihedral surfaces different from the one in Lemma \ref{doublecover}.

\begin{lemma}\label{bidoublecover}
Let $C$ be a curve of genus $5$ admitting a free action of $G^0 \cong ( \ZZ/2\ZZ )^2$, so $C/G^0$ is a curve of genus $2$. 
 
Consider the dihedral surface $X_{C,G^0}$. Then $\Alb(X_{C,G^0})$ is a principally polarized Abelian surface  isogenous to $J(C/G^0)$ via an isogeny with kernel isomorphic to $G^0$.

Consider the \'etale bidouble cover $\pi_{G^0} \colon X_{C,G^0} \rightarrow \Sym^2(C/G^0)$. Then $\pi_{G^0}^*( \Delta_{C/G^0})=\sum_{g \in G^0} \Delta_g$ and   $\pi_{G^0}^* (\Gamma_{C/G^0})=\sum_{g \in G^0} \Gamma_g$ have both $4$ connected components such that  all  maps $\pi_{G^0|\Delta_g} \colon \Delta_g \rightarrow \Delta_{C/G^0}$ and 
$\pi_{G^0|\Gamma_g} \colon \Gamma_g \rightarrow \Gamma_{C/G^0}$ are isomorphisms. Moreover
\begin{itemize}
\item[i)] either $\Delta_g \Gamma_{g}= 0$ for all $g$ ,  whereas $\Delta_g \Gamma_{g'}= 2$ for all pair of distinct elements $g,g'$,
\item[ii)] or $\Delta_g \Gamma_{g}= 3$ for all $g$ ,  whereas $\Delta_g \Gamma_{g'}= 1$ for all pair of distinct elements $g,g'$.
\end{itemize}

\end{lemma}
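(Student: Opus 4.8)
First I would dispatch the statement about $\Alb(X_{C,G^0})$ directly from the general results. Here $X_{C,G^0}$ is the semi-isogenous mixed surface for $G^0\cong(\ZZ_{/2\ZZ})^2$ abelian acting freely and $G=D(G^0)$; so in Theorem~\ref{Albanese and generalized dihedral surfaces} the group $K$ is trivial: $\Sigma=\{1\}$ since the action is free, $[G^0,G^0]=\{1\}$ since $G^0$ is abelian, and $g^2=1$ for every $g\in G\setminus G^0$ since $G$ is dihedral. Hence $\Alb(X_{C,G^0})$ is isogenous to $J(C/G^0)$ with kernel $\cong G^0/K=G^0\cong(\ZZ_{/2\ZZ})^2$, and as $q=g(C/G^0)=2$ and $(\ZZ_{/2\ZZ})^2\cong\ZZ_{/2\ZZ}\times\ZZ_{/2\ZZ}$, Remark~\ref{polarizations} (with $d_1=2$, $d_2=1$) makes the induced polarization principal. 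I would also record here that $\pi_{G^0}\colon X_{C,G^0}\to\Sym^2(C/G^0)$ is \'etale — by Proposition~\ref{simply ramified}, since for a dihedral action $(\tau'h)^2=1$ for every $h$ — and Galois with group $G^0(2)/G\cong(\ZZ_{/2\ZZ})^2$ by Proposition~\ref{normality}, i.e.\ an \'etale bidouble cover of degree $4$.

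Next I would describe the two configurations. Since $\pi_{G^0}$ is \'etale of degree $4$ and $\Gamma_{C/G^0}\cong\PP^1$ is simply connected, $\pi_{G^0}^{*}\Gamma_{C/G^0}$ is a disjoint union of $4$ copies of $\PP^1$, each mapped isomorphically. For $\Delta_{C/G^0}$ I would pull back to $C\times C$: its preimage there is $\bigcup_{g\in G^0}\Gamma_g$, the graphs of the elements $g\in G^0\subset\Aut(C)$, four disjoint curves, each invariant under the action of $G$ (invariant under the diagonal copy of the abelian $G^0$, and $\sigma\Gamma_g=\Gamma_{g^{-1}}=\Gamma_g$); so they descend to four disjoint curves $\Delta_g\subset X_{C,G^0}$, each $\cong\Gamma_g/G^0\cong C/G^0$ and mapped isomorphically onto $\Delta_{C/G^0}$ (degree $1$ by counting components). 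Similarly the preimage of $\Gamma_{C/G^0}$ in $C\times C$ is $\bigcup_{g\in G^0}\Gamma_{g\tilde\eta}$, the graphs of the four lifts to $C$ of the hyperelliptic involution $\eta$ of $C/G^0$ ($\tilde\eta$ a fixed lift). Comparing with the $4$ components downstairs, $G$ must fix each such graph; from $\sigma\Gamma_{g\tilde\eta}=\Gamma_{(g\tilde\eta)^{-1}}$ this forces every lift to be an involution, and from invariance under the diagonal $G^0$ that every lift commutes with $G^0$; hence $\tilde G:=\langle G^0,\tilde\eta\rangle\subset\Aut(C)$ is $\cong(\ZZ_{/2\ZZ})^3$. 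I would then fix the labelling $\Delta_g=q(\Gamma_g)$, $\Gamma_g=q(\Gamma_{g\tilde\eta})$, with $\tilde\eta$ itself to be pinned down at the very end.

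For the intersection numbers I would work upstairs. By Proposition~\ref{ramification} the quotient $q\colon C\times C\to X_{C,G^0}$ is simply ramified precisely along the $\Gamma_g$, $g\in G^0$ (here $O_2=G\setminus G^0$), and \'etale along the $\Gamma_{g'\tilde\eta}$; thus $q^{*}\Delta_g=2\,\Gamma_g$ and $q^{*}\Gamma_{g'}=\Gamma_{g'\tilde\eta}$. The graphs $\Gamma_g$ and $\Gamma_{g'\tilde\eta}$ meet transversally exactly at the fixed points of $g^{-1}(g'\tilde\eta)=(gg')\tilde\eta$ (a nontrivial automorphism, hence with no eigenvalue $1$ on tangent spaces at fixed points), so the projection formula gives
\[
8\,(\Delta_g\cdot\Gamma_{g'})=q^{*}\Delta_g\cdot q^{*}\Gamma_{g'}=2\,\bigl|\mathrm{Fix}\bigl((gg')\tilde\eta\bigr)\bigr|,\qquad\text{whence}\qquad\Delta_g\cdot\Gamma_{g'}=\tfrac14\bigl|\mathrm{Fix}\bigl((gg')\tilde\eta\bigr)\bigr|=:n_{gg'}.
\]

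The remaining and main point is to determine $n_h=\tfrac14|\mathrm{Fix}(h\tilde\eta)|$ for $h\in G^0$. The fixed points of each lift $h\tilde\eta$ lie over the six Weierstra\ss\ points of $C/G^0$. I would consider the degree $8$ Galois cover $f\colon C\to C/\tilde G\cong\PP^1$: it is branched exactly over the six points $q_1,\dots,q_6$ below the Weierstra\ss\ points, and over each $q_k$ its inertia group is an order two subgroup generated by some lift $h(k)\tilde\eta$ of $\eta$ — of order two because $C\to C/G^0$ is \'etale while $C/G^0\to\PP^1$ is ramified of order two, a lift of $\eta$ because $G^0$ acts freely, and the same subgroup at all four points over $q_k$ because $\tilde G$ is abelian. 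Hence $n_h=\#\{k:h(k)=h\}$, with $\sum_h n_h=6$ and $n_h\le3$ by Hurwitz ($|\mathrm{Fix}(h\tilde\eta)|=12-4\,g(C/\langle h\tilde\eta\rangle)$). Triviality of the product of local monodromies gives $\sum_k h(k)\tilde\eta=0$ in $\tilde G\cong(\ZZ_{/2\ZZ})^3$, i.e.\ $\sum_{h:\,n_h\ \text{odd}}h\tilde\eta=0$; the number of odd $n_h$ is even and cannot be $2$, since two distinct lifts cannot sum to $0$ in $(\ZZ_{/2\ZZ})^3$. So it is $0$, forcing $\{n_h\}=\{0,2,2,2\}$, or $4$, forcing $\{n_h\}=\{3,1,1,1\}$. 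Finally, taking $\tilde\eta$ to be the unique lift realising the isolated value, one gets $\Delta_g\cdot\Gamma_g=\tfrac14|\mathrm{Fix}(\tilde\eta)|\in\{0,3\}$ for all $g$ and $\Delta_g\cdot\Gamma_{g'}\in\{2,1\}$ (the common value of the other three $n_h$) for $g\neq g'$, which are exactly alternatives (i) and (ii). The hard part is precisely this last paragraph — extracting the two multisets $\{0,2,2,2\}$, $\{3,1,1,1\}$ from the single monodromy relation and matching the labelling of the $\Gamma_g$ to that of the $\Delta_g$ — together with the input $\tilde G\cong(\ZZ_{/2\ZZ})^3$, on which the clean group theory rests.
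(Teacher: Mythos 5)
Your proof is correct. For the statements about $\Alb(X_{C,G^0})$ and the decomposition of the two pullbacks into four disjoint components mapping isomorphically, you follow essentially the same route as the paper, which simply declares this part ``almost identical to the proof of Lemma~\ref{doublecover}''. Where you genuinely diverge is in the derivation of the dichotomy. The paper exploits that the \'etale $(\ZZ_{/2\ZZ})^2$-cover $C\to C/G^0$ is the fibre product of two \'etale double covers; by Lemma~\ref{doublecover} each intermediate cover carries two lifts of $\eta_{C/G^0}$ fixing the preimages of, respectively, $4$ and $2$ of the six Weierstra\ss\ points, and the dichotomy is exactly whether the two distinguished pairs $\{p_1,p_2\}$ and $\{p_1',p_2'\}$ meet in $0$ or $1$ points (equality being excluded by Hurwitz applied to the lift $(\eta_1,\eta_1')$). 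You instead encode the same data as the assignment $k\mapsto h(k)$ of the six branch points of the $(\ZZ_{/2\ZZ})^3$-cover $C\to\PP^1$ to the four lifts of $\eta_{C/G^0}$, and extract the two multisets $\{0,2,2,2\}$ and $\{3,1,1,1\}$ from the abelian monodromy relation together with the Hurwitz bound $n_h\le 3$. Both arguments are complete; yours is more intrinsic and adapts readily to other abelian covers, while the paper's explicit parametrization by the two pairs of Weierstra\ss\ points makes it evident that both cases actually occur --- a fact the paper asserts immediately after the lemma and which your exclusion argument does not address. Two presentational quibbles: the existence of a lift $\tilde\eta$ is asserted rather than justified (the paper derives it from the triviality of the action of the hyperelliptic involution on $2$-torsion line bundles), and your monodromy relation is more cleanly stated as $\sum_{h:\,n_h\ \text{odd}}h=0$ in $G^0$ together with the evenness of the number of odd $n_h$; your subsequent use of it is nonetheless correct.
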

\begin{proof}
The proof  is almost identical to the proof of Lemma \ref{doublecover}. We sketch then here only the computation of the fixed points of the $4$ lifts of $\eta_{C/G^0}$ to $C$, producing the dichotomy in the final statement.

The quotient map $C \rightarrow C/G^0$ is the fiber product of two double covers as in Lemma \ref{doublecover}, so we have two lifts of the hyperelliptic involution $\eta_{C/G^0}$ to each double cover. 

Denote by $\eta_1$ and $\eta_2$ the two lifts to the first double cover and by $\eta'_1$, $\eta'_2$ the two lifts to the second double cover. By the proof of Lemma \ref{doublecover} we can assume that $\eta_2$ fixes $4$ points, preimage of two Weierstra\ss\ points $p_1,p_2$ of $C/G^0$ (so  $\eta_1$ fixes the preimages of the other $4$  Weierstra\ss\ points) and similarly  $\eta'_2$ fixes  the preimages of two Weierstra\ss\ points $p'_1,p_2'$ .

We write the $4$ lifts of $\eta_{C/G^0}$ to $C$ as ordered pairs $(\eta_i,\eta'_j)$. If $\{p_1,p_2\}=\{p'_1,p'_2\}$, then  $\eta_1$ and $\eta'_1$ would both fix the same $8$ points and then $(\eta_1,\eta_1')$ would fix $16$ points of $C$, contradicting Hurwitz formula. 

Then the cardinality of  $\{p_1,p_2\} \cap \{p'_1,p'_2\}$ is either $0$ or $1$.
It is now easy to compute the intersection number $\Delta_g\Gamma_{g'}$ in the two cases, obtaining the dichotomy in the statement.
\end{proof}
Both the cases i) and ii) in Lemma \ref{bidoublecover} occur, giving the two already mentioned families of \'etale bidouble covers studied recently in \cite{ortega}.

The Albanese morphism of $X_{C,G^0}$ is the contraction of the curves $\Gamma_g$, mapped to the kernel of this isogeny. 

\begin{definition}\label{Delta''}
The images of the $\Delta_g$ in the abelian variety  $\Alb(X_{C,G^0})$ in Lemma \ref{bidoublecover} are four curves $\Delta''_g$ such that
\begin{itemize}
\item[i] either the singularities of $\Delta''_g$ are three ordinary double points  at three of the four points of the kernel of the isogeny $\Alb(X_{C,G^0})\rightarrow J(C/G^0)$, and  $\Delta''_g$ does not contains the fourth,
\item[ii] or $\Delta''_g$ has an ordinary triple point at one of the points of the kernel as only singularity, and contains the other three.
\end{itemize}
\end{definition}

\begin{proposition}\label{K^22}
Let $X$ be a mixed semi-isogenous surface with $p_g(X)=q(X)=2$, $K_X^2=2$.
\begin{enumerate}
\item The self-intersection of a canonical divisor of the minimal model of $X$ is $4$.  $X$ has minimal realization given by a group $G^0$ of order $8$ acting freely on a curve of genus $9$. More precisely 
\subitem either $G^0\cong D(\ZZ/4\ZZ)$, $G \cong  D(\ZZ/4\ZZ) \times \ZZ/2\ZZ$,
\subitem or $G^0=\langle i,j,k| i^2=j^2=k^2=ijk=-1\rangle$ is the group of quaternions and $G$ is the central product of $G^0$ with $\ZZ/4\ZZ$ over a common cyclic central subgroup of order $2$.
\item $\Alb(X)\cong\Alb(X_{C/Z(G^0),G^0/Z(G^0)})$ is one of the Abelian varieties in Lemma \ref{bidoublecover}, isogenous to $J(C/G^0)$ with the kernel of the isogeny isomorphic to $\left( \ZZ/2\ZZ \right)^2$.
The Albanese morphism of $X$ is a double cover branched on one of the curves $\Delta_g''$ in Definition \ref{Delta''}.
 \end{enumerate}
\end{proposition}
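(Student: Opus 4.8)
The plan is to follow the pattern of Propositions \ref{G0=Z2xZ2}, \ref{G0=Z4} and \ref{CH}: part (1) is read off \cite{CanFr15}*{Table 3}, and for part (2) I would apply Theorem \ref{Albanese and generalized dihedral surfaces}, identify the group $K$, and then use the geometry of the resulting dihedral surface worked out in Lemma \ref{bidoublecover}.

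First I would identify $K$. Since $X$ is semi-isogenous, $G^0$ acts freely on $C$, so $\Sigma=\{1\}$. In both alternatives for $G^0$ --- the dihedral group $D(\ZZ_{/4\ZZ})$ and the quaternion group --- one has $[G^0,G^0]=Z(G^0)\cong\ZZ_{/2\ZZ}$, and a short computation with the explicit presentations of $G$ in part (1) shows that every $g^2$ with $g\in G\setminus G^0$ also lies in $Z(G^0)$. As $Z(G^0)$ is characteristic in $G^0$ and hence normal in $G$, it follows that the group $K$ of Theorem \ref{Albanese and generalized dihedral surfaces} equals $Z(G^0)\cong\ZZ_{/2\ZZ}$.

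Next I would describe the further quotient. Since $|K|=2$ and $K$ acts freely on $C$, Hurwitz's formula gives $g(C/K)=5$, and $G^0/K$ --- which is $(\ZZ_{/2\ZZ})^2$ in both cases --- acts freely on $C/K$ with quotient $C/G^0$ of genus $2$. Hence, by the description of the minimal realization of a further quotient in Section \ref{FurtherQuotients} together with the characterization of $(C\times C)/G_K$ as a dihedral surface established after Definition \ref{GDS}, $(C\times C)/G_K=X_{C/Z(G^0),G^0/Z(G^0)}$ is exactly one of the dihedral surfaces of Lemma \ref{bidoublecover}. Combining the commutative diagram of Theorem \ref{Albanese and generalized dihedral surfaces} with Lemma \ref{bidoublecover} then yields that $\Alb(X)\cong\Alb(X_{C/Z(G^0),G^0/Z(G^0)})$ is a principally polarized Abelian surface, isogenous to $J(C/G^0)$ with kernel $\cong G^0/K\cong(\ZZ_{/2\ZZ})^2$, and that the Albanese morphism of $X$ has degree $|K|=2$, factoring as $\pi_K$ followed by the contraction of the rational curves $\Gamma_g$ of Lemma \ref{bidoublecover}.

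Finally, to pin down the branch curve I would argue as in the proof of Proposition \ref{G0=Z2xZ2}. By Proposition \ref{ramification loci} the double cover $\pi_K$ is simply ramified along the curves $R_g$ with $g^2\in K\setminus\{1\}$; the explicit presentations show that these $g$ differ only by the nontrivial (central) element of $K$, so their images in $C/K\times C/K$ all coincide with the graph of a single free involution of $C/K$, whence --- by the analysis in the proof of Lemma \ref{bidoublecover}, following Lemma \ref{doublecover} --- the branch curve of $\pi_K$ is one of the curves $\Delta_g\subset X_{C/Z(G^0),G^0/Z(G^0)}$, and its image under the contraction of the $\Gamma_g$ is the corresponding curve $\Delta_g''$ of Definition \ref{Delta''}. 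The only genuinely new point compared with the earlier propositions is the group-theoretic identification $K=Z(G^0)$, together with the bookkeeping that the ramification curves of $\pi_K$ descend to a single free involution on $C/K$; since the two extensions $1\to G^0\to G\to\ZZ_{/2\ZZ}\to 1$ are small and completely explicit, I expect these checks to be routine, and no serious obstacle to arise.
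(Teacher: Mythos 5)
Your proposal follows the paper's proof essentially step for step: part (1) is read off \cite{CanFr15}*{Table 3}, the group $K$ of Theorem \ref{Albanese and generalized dihedral surfaces} is identified with $Z(G^0)\cong\ZZ_{/2\ZZ}$ in both cases, the further quotient is recognized as one of the dihedral surfaces of Lemma \ref{bidoublecover}, and Proposition \ref{ramification loci} is used to locate the branch curve. The one place where you overstate is the claim that the two ramification curves of $\pi_K$ descend to the graph of a \emph{free involution} of $C/K$. This is correct for $G\cong D(\ZZ_{/4\ZZ})\times\ZZ_{/2\ZZ}$, but in the quaternion case, taking $\tau'=z$ (the generator of the $\ZZ_{/4\ZZ}$ factor, so $\tau=z^2=-1$), the two elements $g\in G\setminus G^0$ with $g^2\in K\setminus\{1\}$ are $\pm z$, and $R_{\pm z}=\Gamma_{\mp 1}$; since $\pm 1\in K$, their common image in $C/K\times C/K$ is the \emph{diagonal} $\Gamma_{[1]}$, not the graph of a nontrivial element of $G^0/K$. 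This does not break your argument, because the four curves $\Delta_g$ of Lemma \ref{bidoublecover}, and hence the $\Delta''_g$ of Definition \ref{Delta''}, are indexed by \emph{all} of $G^0/K$ including the identity, so the branch curve is still one of them; the paper sidesteps the issue by asserting only that the two ramification curves have the same irreducible image dominating $\Delta_{C/G^0}$, which is all that is needed.
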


\begin{proof}
\begin{enumerate}
\item \cite{CanFr15}*{Table 3}.
\item Theorem \ref{Albanese and generalized dihedral surfaces} gives in both cases  $K=Z(G^0)\cong \ZZ/2\ZZ$,  $G^0/Z(G^0) \cong  \left( \ZZ/2\ZZ \right)^2$ and then the Albanese morphisms is a double cover of one of the Abelian varieties in Lemma \ref{bidoublecover}.

By Proposition \ref{ramification loci}   $\pi_{K}$ ramifies in both cases, on two curves $\Gamma_g$ and $\Gamma_{g^{-1}}$, mapping  to the same curve in $X_{C/Z(G^0),G^0/Z(G^0)}$. So the branching curve of the Albanese morphism is irreducible and dominates $\Delta_{C/G^0} \subset J(C/G^0)$, so it is one of the curves in Definition \ref{Delta''}.
\end{enumerate}
\end{proof}

%%%%%%%%%%%%%%%%% (bibliography) %%%%%%%%%%%%%%%%%%%%%%%%%%%%%


\begin{thebibliography} {999999}

\bibitem[Arm68]{Armstrong}
M.A. Armstrong: The fundamental group of the orbit space of a
discontinuous group. 
\emph{Proc. Camb. Phil. Soc.} $\boldsymbol{64}$ (1968), 299--301.


\bibitem[BCGP12]{BCGP12}
I. Bauer, F. Catanese, F. Grunewald, R. Pignatelli: Quotients of products of curves, new surfaces with $p_g=0$ and their fundamental groups. 
\emph{Amer. J. Math.} $\boldsymbol{134}$, 4 (2012), 993--1049.



\bibitem[BL04]{BL94}
C. Birkenhake, H. Lange: \textit{Complex abelian varieties}. 
Second edition. Grundlehren der Mathematischen Wissenschaften, \textbf{302}. Springer-Verlag, Berlin, 2004. 

\bibitem[BO19]{ortega}
P. Bor\'owka, A. Ortega: Klein Coverings of Genus 2 curves.	
arxiv:1904.05962.

\bibitem[BP12]{BP10}
I. Bauer, R. Pignatelli: The classification of minimal product-quotient surfaces with $p_g=0$. 	
\emph{Math. Comp.} $\mathbf{81}$, 280 (2012), 2389--2418.

\bibitem[BP16]{BP16}
I. Bauer, R. Pignatelli: Product-quotient surfaces: new invariants and algorithms. 	
\emph{Groups Geom. Dyn.} $\mathbf{10}$, 1 (2016), 319--363.

\bibitem[BPS16]{BPS}
M.A. Barja, R. Pardini, L. Stoppino: Surfaces on the Severi line. 
\emph{J. Math. Pures Appl.} (9) $\boldsymbol{105}$, 5 (2016), 734--743.

\bibitem[Bro68]{Topologyandgroupoids}
R. Brown: \textit{Topology and Groupoids}.
Third edition of Elements of modern topology, McGraw-Hill, New York, 1968. BookSurge, LLC, Charleston, SC, 2006.
 
\bibitem[Cat00]{Ca00}
F. Catanese: Fibred surfaces, varieties isogenous to a product and related moduli spaces. 
\emph{Amer. J. Math.} $\boldsymbol{122}$, 1 (2000), 1--44.

\bibitem[Cat18]{Cat17}
F. Catanese: On the canonical map of some surfaces isogenous to a product.
In: \emph{Local and global methods in algebraic geometry}, 33--57,
Contemp. Math., $\boldsymbol{712}$, Amer. Math. Soc., Providence, RI, 2018. 

\bibitem[Cat89]{ENR}
F. Catanese: Everywhere non reduced moduli spaces, 
\emph{Invent. Math.} $\boldsymbol{98}$ (1989), 293--310.

\bibitem[CF18]{CanFr15}
N. Cancian, D. Frapporti: On semi-isogenous mixed surfaces.
\emph{Math. Nachr.} $\mathbf{291}$, 2-3 (2018), 264--283.

\bibitem[CS02]{CataneseSchreyer}
F. Catanese, F.-O. Schreyer: Canonical projections of irregular algebraic surfaces. 
In \emph{Algebraic geometry}, 79--116, de Gruyter, Berlin, 2002.

\bibitem[FP15]{FP15}
 D. Frapporti, R. Pignatelli: Mixed quasi-\'etale quotients with arbitrary singularities. 
\emph{Glasg. Math. J.} $\boldsymbol{57}$, 1 (2015), 143--165.

\bibitem[Fra13]{Fr13}
D. Frapporti: Mixed quasi-\'etale surfaces, new surfaces of general type with $p_g=0$ and their fundamental group.
\emph{Collect. Math.} $\boldsymbol{64}$, 3 (2013), 293--311.

\bibitem[Kod63]{Kod63}
K. Kodaira:  On stability of compact submanifolds of complex manifolds, 
\emph{Amer. J. Math.} \textbf{85} (1963), 79--94.

\bibitem[Mas91]{Massey}
W.S. Massey: \textit{A basic course in algebraic topology}. Graduate Texts in Mathematics \textbf{127}, Springer-Verlag, Berlin, 1991.

\bibitem[MP10]{MisPol}
E. Mistretta, F. Polizzi: Standard isotrivial fibrations with $p_g=q=1$. II. \emph{J. Pure Appl. Algebra} $\boldsymbol{214}$, 4 (2010), 344--369.

\bibitem[Par05]{pardaInv}
R. Pardini: The Severi inequality $K^2 \geq 4\chi$ for surfaces of maximal Albanese dimension. 
\emph{Invent. Math.} $\boldsymbol{159}$,  3 (2005), 669--672.

\bibitem[Pen11]{penego}
M. Penegini: The classification of isotrivially fibred surfaces with $p_g = q = 2 $. With an appendix by S\"onke Rollenske. 
\emph{Collect. Math.} $\boldsymbol{62}$, 3 (2011), 239--274. 

\bibitem[Pig15]{MYSURVEYONQE}
R. Pignatelli: 	On quasi \'etale quotients of a product of two curves.		
In \emph{Beauville Surfaces and Groups}. Springer Proceedings in Mathematics and Statistics $\mathbf{123}$ (2015), 149--170. 

\bibitem[Pol10]{PolNumIso}
F. Polizzi: Numerical properties of isotrivial fibrations,
\emph{Geom. Dedicata}  $\mathbf{147}$, 1 (2010), 323--355.

\bibitem[Pol18]{PolMono}
F. Polizzi: Monodromy representations and surfaces with maximal Albanese dimension.
\emph{Boll. Unione Mat. Ital.}   $\mathbf{11}$, 1 (2018), 107--119.

\bibitem[PP13a]{PePol13a}
M. Penegini, F. Polizzi: On surfaces
with $p_g=q=2$, $K^2=5$ and Albanese map of degree $3$,  \emph{Osaka J. Math.} $\boldsymbol{50}$ (2013), 643--686.

\bibitem[PP13b]{PePol13b}
M. Penegini, F. Polizzi: Surfaces with
$p_g=q=2$, $K^2=6$ and Albanese map of degree $2$, \emph{Canadian J. Math.} $\mathbf{65}$ (2013), 195--221.

\bibitem[PP17]{pigpol}
R. Pignatelli, F. Polizzi: A family of surfaces with $p_g=q=2$, $K^2=7$ and Albanese map of degree $3$. 
\emph{Math. Nachr.} $\mathbf{290}$, 16 (2017), 2684--2695.

\bibitem[Zha14]{Zhang}
T. Zhang: Severi inequality for varieties of maximal Albanese dimension.
\emph{Math. Ann.} $\boldsymbol{359}$, 3--4 (2014), 1097--1114. 

\end{thebibliography}
\end{document}